\theoremstyle{plain}
\newtheorem{de}{Definition}[section]
\newtheorem{lem}[de]{Lemma}
\newtheorem{prop}[de]{Proposition}
\newtheorem{cor}[de]{Corollary}
\newtheorem{thm}[de]{Theorem}
\theoremstyle{definition}
\newtheorem{rem}[de]{Remark}
\numberwithin{equation}{section}
\newcommand{\eul}{e}
\newcommand{\imu}{\mathrm{i}}
\newcommand{\dd}{\,\mathrm{d}}
\renewcommand{\epsilon}{\varepsilon}
\renewcommand{\Im}{\operatorname{Im}}
\renewcommand{\Re}{\operatorname{Re}}
\newcommand{\supp}{\operatorname{supp}}
\newcommand{\R}{{\mathbb{R}}}
\newcommand{\N}{{\mathbb{N}}}
\newcommand{\Z}{{\mathbb{Z}}}
\newcommand{\PP}{{\mathbb{P}}}
\newcommand{\cL}{{\mathcal{L}}}
\newcommand{\cF}{{\mathcal{F}}}
\newcommand{\cA}{{\mathcal{A}}}
\newcommand{\om}{\omega}
\newcommand{\ph}{\varphi}
\begin{document}
 \title[Almost sure local wellposedness and scattering for the energy-critical cubic NLS]{Almost sure local wellposedness and scattering for the energy-critical cubic nonlinear Schr{\"o}dinger equation with supercritical data }
 
 \author[M.~Spitz]{Martin Spitz}
 \address[M.~Spitz]{Fakult\"at f\"ur Mathematik, Universit\"at Bielefeld, Postfach 10 01 31, 33501
  Bielefeld, Germany}
\email{mspitz@math.uni-bielefeld.de}
\keywords{Nonlinear Schr{\"o}dinger equation, random initial data, almost sure wellposedness, almost sure scattering}

\begin{abstract}
	We study the cubic defocusing nonlinear Schr{\"o}dinger equation on $\R^4$ with supercritical initial data. For randomized initial data in $H^s(\R^4)$, we prove almost sure local wellposedness for $\frac{1}{7} < s < 1$ and almost sure scattering for $\frac{5}{7} < s < 1$. The randomization is based on a unit-scale decomposition in frequency space, a decomposition in the angular variable, and -- for the almost sure scattering result -- an additional unit-scale decomposition in physical space. We employ new probabilistic estimates for the linear Schr{\"o}dinger flow with randomized data, where we effectively combine the advantages of the different decompositions.
\end{abstract}

\maketitle

\section{Introduction and main results}
We consider the cubic defocusing nonlinear Schr{\"o}dinger equation in four space dimensions, i.e.
\begin{equation}
	\label{eq:CubicNLSDef}
	\begin{aligned}
		\imu \partial_t u + \Delta u &= |u|^2 u \qquad \text{on } \R \times \R^4, \\
		u(0) &= f
	\end{aligned}
\end{equation}
with $f \in H^s(\R^4)$. The equation is called defocusing because of the plus sign in front of $|u|^2 u$. With a minus sign in front of the nonlinearity, the corresponding equation is called focusing. The energy
\begin{align*}
	E(u(t)) = \int_{\R^4} \frac{1}{2} |\nabla u(t,x)|^2 + \frac{1}{4} |u(t,x)|^4 \dd x
\end{align*}
is conserved for sufficiently regular solutions of~\eqref{eq:CubicNLSDef}. Since the rescaling
\begin{align*}
	u(t,x) \mapsto \lambda u(\lambda^2 t, \lambda x)
\end{align*}
leaves both equation~\eqref{eq:CubicNLSDef} and the energy invariant, problem~\eqref{eq:CubicNLSDef} is called energy-critical.

The purpose of this article is to investigate the local wellposedness as well as the long-time behavior of solutions of~\eqref{eq:CubicNLSDef} with initial data below the scaling critical regularity, i.e. with $f \in H^s(\R^4)$ with $s < 1$. We prove that~\eqref{eq:CubicNLSDef} is almost surely locally wellposed for initial data $f \in H^s(\R^4)$ with $\frac{1}{7} < s < 1$ which is randomized with respect to a certain decomposition both in frequency space and in the angular variable. Decomposing the initial data $f \in H^s(\R^4)$ additionally in physical space, we show that randomizing with respect to the resulting decomposition leads to global scattering solutions of~\eqref{eq:CubicNLSDef} almost surely for $\frac{5}{7} < s < 1$.

The energy-critical defocusing nonlinear Schr{\"o}dinger equation (NLS) was subject of extensive research, which culminated in~\cite{RV2007} and~\cite{V2012} in four space dimensions, building on the fundamental work~\cite{CKSTT2008} in $\R^3$. In~\cite{RV2007, V2012} it was shown that solutions of~\eqref{eq:CubicNLSDef} in the energy space exist globally and scatter. More precisely, for every $f \in \dot{H}^1(\R^4)$ there is a unique global solution $u$ of~\eqref{eq:CubicNLSDef} which scatters as $t \rightarrow \pm \infty$ and satisfies
\begin{equation}
	\label{eq:SpaceTimeBoundByEnergy}
	\|\nabla u\|_{L^3_t L^3_x} \leq L(E(u_0)),
\end{equation}
where $L \colon [0,\infty) \rightarrow [0,\infty)$ is a non-decreasing function, see Theorem~1.1 and Lemma~3.6 in~\cite{RV2007}. We refer to~\cite{CKSTT2008, RV2007, V2012} and the references therein for the historical development and prior work on the energy-critical defocusing NLS.

Below the scaling-critical exponent $s_c = 1$, problem~\eqref{eq:CubicNLSDef} is known to be illposed, see~\cite{CCT2003}. However, how generic is the ill behavior of~\eqref{eq:CubicNLSDef} below the scaling critical regularity? Can one still identify large sets of initial data for which unique local solutions, global solutions, and scattering solutions exist? One way to investigate this question is to use randomization.

%%%%%%%%%%%%%%%%%%%%%%%%%%%%%%%%%%%%%%%%%%%%%%%%%%%%%%%%%%%%%%%%%%%%%%%%%%%%%%%%%%%%%%%%%%%%%%%%%%%%%%%%%%
%%%%%%%%%%%%%%%%%%%%%%%%%%%%%%%%%%%%%%%%%%%%%%%%%%%%%%%%%%%%%%%%%%%%%%%%%%%%%%%%%%%%%%%%%%%%%%%%%%%%%%%%%%
	\subsection{Randomization}
	\label{subsec:Randomization}
	
	Starting from the seminal works~\cite{B1994, B1996} and~\cite{BT2008I,BT2008II}, large interest in random dispersive partial differential equations and a vast body of literature has developed over the last years. Therefore, we restrict our discussion to results which are closest to this work.
	
	In~\cite{BOP2015I, BOP2015II} almost sure local wellposedness of the cubic NLS on $\R^d$ with $d \geq 3$ for suitably randomized initial data $f \in H^s(\R^d)$ was shown, where $\frac{3}{5} < s < 1$ in the case $d = 4$. We also refer to~\cite{BOP2019,SSW2021} for improvements in the case of the cubic NLS on $\R^3$, to~\cite{B2019} for the energy-critical NLS on $\R^3$, and to~\cite{OOP2019} for the energy-critical NLS on $\R^d$ with $d = 5,6$. The regularity threshold for almost sure local wellposedness of the cubic NLS in $\R^4$ was lowered in~\cite{DLM2019} to $\frac{1}{3} < s < 1$.
	
	As a consequence of the local theory, conditional global results and global results on sets with positive probability were provided in~\cite{BOP2015I} and~\cite{B2019} for the energy-critical NLS. Apart from the small data results, the global theory is more delicate. The first advances in this direction were made for the wave equation. To be more precise, almost sure global existence for the defocusing energy-critical wave equation was shown in~\cite{P2017} for $d = 4$ and $d = 5$ and in~\cite{OP2016} for $d = 3$.
	
	The first almost sure scattering result for an energy-critical dispersive equation (besides the small data theory) was given in~\cite{DLM2020} for the defocusing energy-critical wave equation in $\R^4$ with initial data which is radially symmetric before the randomization. See also~\cite[Appendix~A]{DLM2019} for an improvement and~\cite{Br2020} for the case $d = 3$. Building upon ideas from~\cite{DLM2020}, almost sure scattering for the solutions of the defocusing energy-critical NLS in $\R^4$ with randomized radially symmetric initial data from $H^s(\R^4)$ was proven in~\cite{KMV2019} for $\frac{5}{6} < s < 1$ and then improved in~\cite{DLM2019} to $\frac{1}{2} < s < 1$. Very recently, almost sure scattering for randomized radially symmetric initial data was also established for the cubic defocusing NLS on $\R^3$ in~\cite{C2021} and~\cite{SSW2021}. While the preceding almost sure scattering results all rely on the radial symmetry of the original data, the assumption of radial symmetry for the initial data before randomization was removed in~\cite{Br2019} for the defocusing energy-critical wave equation in $\R^4$.
	
	The main motivation of this work is to prove almost sure scattering for the defocusing energy-critical NLS in $\R^4$ without the radial symmetry assumption for the original initial data. As a byproduct, we improve the regularity threshold for almost sure local wellposedness of~\eqref{eq:CubicNLSDef} to $s > \frac{1}{7}$. However, we point out that we use a different randomization technique than the aforementioned works.
	
	In fact, all the above references cited in the context of almost sure local and global wellposedness of the energy-critical wave equation and NLS on the full-space except~\cite{Br2019} and~\cite{Br2020} employ a \emph{Wiener randomization}, i.e. a randomization based on a unit-scale decomposition of frequency space (see Subsection~\ref{subsec:UnitScaleDecompositionFreq} below for details). Nevertheless, there are other approaches. Considering the wave equation on compact manifolds with supercritical scaling, a randomization with respect to the decomposition in an orthonormal basis of eigenfunctions of the Laplacian was used in~\cite{BT2008I, BT2008II}. Similar techniques were then also employed on the full-space, see e.g.~\cite{D2012, dS2013, PRT2014} and references therein, before the Wiener randomization was introduced in~\cite{BOP2015I, BOP2015II} and~\cite{LM2014} and proved to be very successful on $\R^d$. The Wiener randomization gives access to a unit-scale Bernstein estimate, improving the Strichartz estimates from the deterministic setting.
	
	More recently, a randomization with respect to a unit-scale decomposition in physical space was introduced in~\cite{M2019} (see Subsection~\ref{subsec:UnitScaleDecompositionPhys} below for details) to improve almost surely the known uniqueness results for the final-state problem of the mass-subcritical NLS in $L^2$. Roughly speaking, this randomization allows to employ the dispersive estimate for solutions of the linear Schr{\"o}dinger equation although the data only belongs to $L^2$. These techniques were further refined in~\cite{NY2019}.
	
	In~\cite{BK2019} a randomization with respect to a decomposition in the angular variable was introduced (see Subsection~\ref{subsec:DecompositionGoodFrame} below for details). It was combined with a randomization in the radial variable and a Wiener randomization to show probabilistic global wellposedness of a wave maps type nonlinear wave equation for scaling supercritical data.
	
	 The interplay of the physical-space randomization from~\cite{M2019} and the angular randomization from~\cite{BK2019} was used in~\cite{S2020} to solve the final-state problem for the Zakharov system in $\R^3$ almost surely.
	 
	 Combinations of different randomization techniques were not only used in~\cite{BK2019} and~\cite{S2020}. In~\cite{Br2019}, where almost sure scattering for the defocusing energy-critical wave equation without the radial symmetry assumption was shown, the Wiener randomization was merged with the physical-space randomization to a microlocal randomization.
	 
	 For the defocusing energy-critical nonlinear Schr{\"o}dinger equation, we further add the angular randomization to prove almost sure scattering without assuming that the original data is radially symmetric. To lower the regularity threshold for the probabilistic local wellposedness theory, it is enough to combine the Wiener randomization with the angular randomization.
	 
	 Before we give the precise formulation of our results, we provide the details of these randomization procedures.

\subsection{Unit scale decomposition in frequency space}
\label{subsec:UnitScaleDecompositionFreq}
We fix a non-negative bump function $\phi \in C_c^\infty(\R^4)$ such that $\phi(x) = 1$ for $|x| \leq 1$ and $\phi(x) = 0$ for $|x| \geq 2$. We set
\begin{align*}
	\psi_j(\xi) = \frac{\phi(\xi - j)}{\sum_{m \in \Z^4} \phi(\xi - m)}
\end{align*}
for all $\xi \in \R^4$ and $j \in \Z^4$, which yields a smooth partition of unity $\{\psi_j \colon j \in \Z^4\}$. Introducing the operators $P_j$ by 
\begin{equation}
\label{eq:DefPk}
	P_j f = \cF^{-1}(\psi_j \hat{f})
\end{equation}
for all $j \in \Z^4$, where $\hat{f} = \cF f$ denotes the Fourier transform of $f$, we thus obtain the decomposition
\begin{equation}
	\label{eq:UnitScaleDecFreqSpace}
	f = \sum_{j \in \Z^4} P_j f
\end{equation}
for all $f \in L^2(\R^4)$. Randomizing with respect to this unit scale decomposition in frequency space yields the Wiener randomization, see e.g.~\cite{BOP2015I, BOP2015II, LM2014}.

\subsection{Unit scale decomposition in physical space}
\label{subsec:UnitScaleDecompositionPhys}	
To decompose a function in physical space, we employ the same partition of unity as above. However, in order to facilitate the distinction between the decomposition in frequency and in physical space, we use a different notation. Set
\begin{equation}
\label{eq:DefPhysicalSpacePartition}
	\ph_i = \psi_i
\end{equation}
for all $i \in \Z^4$. We then get
\begin{equation}
	\label{eq:UnitScaleDecPhysSpace}
	f = \sum_{i \in \Z^4} \ph_i f
\end{equation}
for every $f \in L^2(\R^4)$. The physical-space randomization means the randomization with respect to this decomposition, see~\cite{M2019, NY2019, S2020}.
	
\subsection{Decomposition in the angular variable with respect to a good frame}
\label{subsec:DecompositionGoodFrame}
	The last decomposition needs some preparation. We closely follow~\cite{BK2019, S2020} in our presentation.
	
	Recall that the spherical harmonics of degree $k$, i.e. the restriction of the homogeneous harmonic polynomials of degree $k$ to the sphere $S^3$, are the eigenfunctions of the Laplacian on the sphere. The space $E_k$ consisting of these functions has dimension
	\begin{align*}
		N_k = \binom{k+3}{3} - \binom{k+1}{3} = (k+1)^2.
	\end{align*}
	We now fix an orthonormal frame
	\begin{align*}
		\{b_{k,l} \in L^2(S^3) \colon l \in \{1, \ldots, N_k\}, k \in \N_0\}
	\end{align*}
	of $L^2(S^3)$, consisting of eigenfunctions of $\Delta_{S^3}$, such that there is a constant $C > 0$ with
	\begin{equation}
		\label{eq:BoundednessGoodFrame}
		\|b_{k,l}\|_{L^q(S^3)} \leq \begin{cases} 
										C \sqrt{q} \qquad &\text{if } q < \infty, \\
										C \sqrt{\log(k)} &\text{if } q = \infty
									\end{cases}
	\end{equation}		
	for all $l \in \{1, \ldots, N_k\}$, $k \in \N$, and $q \in [2,\infty]$. The existence of such a frame is a consequence of Th{\'e}or{\`e}me~6 and Proposition~3.2 in~\cite{BL2013}, see also~\cite[Theorem~1.1]{BK2019} and~\cite{BL2014}. Following~\cite{BK2019}, we call an orthonormal basis $\{b_{k,l} \colon l \in \{1, \ldots, N_k\}, k \in \N_0\}$ as above a \emph{good frame}.
	
	Now let $f \in L^2(\R^4)$. We first rescale the Littlewood-Paley blocks to frequency $1$ writing
	\begin{equation}
		\label{eq:DefgM}
		g^M = (P_M f)(M^{-1} \cdot)
	\end{equation}
	for every $M \in 2^{\Z}$, where $P_M$ denotes the standard Littlewood-Paley projector defined in Section~\ref{sec:NotationPrelim} below. After transition to polar coordinates we expand the Fourier transform of $g^M$ in the good frame, i.e. we write
	\begin{equation}
	\label{eq:gMInGoodFrame}
		\hat{g}^M(\rho \theta) = \sum_{k=0}^\infty \sum_{l = 1}^{N_k} \hat{c}^M_{k,l}(\rho) b_{k,l}(\theta),
	\end{equation}
	where every coefficient $\hat{c}^M_{k,l}$ is supported in $(\frac{1}{2},2)$. Theorem~3.10 in~\cite{SW71} thus yields
	\begin{align*}
		g^M(r \theta) = \sum_{k = 0}^\infty \sum_{l = 1}^{N_k} a_k r^{-1} b_{k,l}(\theta) \int_0^\infty \hat{c}_{k,l}^M(\rho) J_{k+1}(r \rho) \rho^2 \dd \rho,
	\end{align*}
	where $a_k = \imu^k (2 \pi)^{-2}$ and the Bessel function $J_{\mu}$ is given by 
	\begin{align*}
 		J_\mu(t) = \frac{(\frac{t}{2})^\mu}{\Gamma(\frac{2\mu+1}{2}) \Gamma(\frac{1}{2})}\int_{-1}^1 \eul^{\imu t s} (1-s^2)^{\frac{2\mu-1}{2}} \dd s
 	\end{align*}
 	for all $t > 0$ and $\mu > -\frac{1}{2}$. Moreover, Plancherel's theorem and~\eqref{eq:gMInGoodFrame} yield
	\begin{equation}
		\label{eq:PlancherelForgM}
		\|g^M\|_{L^2(\R^4)}^2 \sim \sum_{k = 0}^\infty \sum_{l = 1}^{N_k} \| \hat{c}^M_{k,l} \|_{L^2(\rho^3 \dd \rho)}^2.
	\end{equation}	 	
 	 Scaling back to frequency $M$, we obtain the representation
 	\begin{equation}
 	\label{eq:DecompositionAngular}
 		P_M f(r \theta) = g^M(M r \theta) = \sum_{k = 0}^\infty \sum_{l = 1}^{N_k} a_k M^{-1} r^{-1} b_{k,l}(\theta) \int_0^\infty \hat{c}_{k,l}^M(\rho) J_{k+1}(M r \rho) \rho^2 \dd \rho
 	\end{equation}
 	of $P_M f$ in polar coordinates. Randomizing with respect to this decomposition yields an angular randomization, see~\cite{BK2019, S2020}.
 	
 	\subsection{Definition of the randomizations}
 	\label{subsec:DefRandomizations}
 	
 	 Combining the angular decomposition in the good frame~\eqref{eq:DecompositionAngular} with the unit-scale decomposition of frequency space from~\eqref{eq:UnitScaleDecFreqSpace}, we arrive at
 	\begin{equation}
 	\label{eq:decf}
 		P_M f = \sum_{j \in \Z^4} \sum_{k = 0}^\infty \sum_{l = 1}^{N_k} a_k M^{-1} P_j \Big[ r^{-1} b_{k,l}(\theta) \int_0^\infty \hat{c}_{k,l}^M(\rho) J_{k+1}(M r \rho) \rho^2 \dd \rho \Big].
 	\end{equation}
	For every $M \in 2^{\Z}$ let $(X^M_{j,k,l})_{j \in \Z^4, l \in \{1, \ldots, N_k\}, k \in \N_0}$ be a sequence of independent, real-valued, mean-zero random variables on a probability space $(\Omega, \cA, \PP)$ such that there is a constant $c > 0$ with
	\begin{equation}
	\label{eq:ConditionRandomVariables}
		\int_\R e^{\gamma x} \dd \mu_{j,k,l}^M(x) \leq e^{c \gamma^2}
	\end{equation}
	for all $\gamma \in \R$, $l \in \{1, \ldots, N_k\}$, $k \in \N_0$, and $j \in \Z^4$, where $\mu^M_{j,k,l}$ denotes the corresponding distributions. A sequence of independent, mean-zero Gaussian random variables with uniformly bounded variances satisfies these assumptions, another class of examples is given by random variables whose distributions have compact support.
	
	 In view of~\eqref{eq:decf}, we then define the \emph{randomization} $f^\om$ of $f$ as
	\begin{equation}
		\label{eq:DefRandomization1}
		f^\om = \sum_{M \in 2^{\Z}} \sum_{j \in \Z^4} \sum_{k = 0}^\infty \sum_{l = 1}^{N_k} X^M_{j,k,l}(\om) a_k M^{-1} P_j \Big[ r^{-1} b_{k,l}(\theta) \int_0^\infty \hat{c}_{k,l}^M(\rho) J_{k+1}(M r \rho) \rho^2 \dd \rho \Big],
	\end{equation}
	which is understood as the limit in $L^2(\Omega, L^2(\R^4))$. As we are interested in the scaling-supercritical regime of~\eqref{eq:CubicNLSDef}, it is important to note that this randomization does not lead to higher Sobolev regularity in general. In fact, if $(\Omega, \cA, \PP)$ is the product space of two probability spaces $(\Omega_1, \cA_1, \PP_1)$ and $(\Omega_2, \cA_2, \PP_2)$ and $X^{M}_{j,k,l}(\omega) = X_j(\omega_1) X^{M}_{k,l}(\omega_2)$ for all $\omega = (\omega_1, \omega_2) \in \Omega$, where $(X_j)_{j}$ and $(X^M_{k,l})_{M,k,l}$ are independent and identically distributed Gaussian random variables on $\Omega_1$ and $\Omega_2$ respectively,
	then $f \notin H^s(\R^4)$ implies $f^\om \notin H^s(\R^4)$ almost surely for every $s > 0$. To show this statement, one first uses the arguments from the proof of~\cite[Lemma~B.1]{BT2008I} in $\omega_2$ and then employs an adaption of this proof in $\omega_1$.

	We next combine the randomization with respect to a unit-scale decomposition in frequency space and a decomposition with respect to a good frame with another unit-scale decomposition in physical space. To that purpose, we apply the decomposition~\eqref{eq:decf} to $\ph_i f$ for every $i \in \Z^4$ and then sum over $i$. To fix the notation for later use, we repeat the details of the construction.
	
	 Take $f \in L^2(\R^4)$. For each $i \in \Z^4$ and $M \in 2^{\Z}$ we first rescale $P_M(\ph_i f)$ to unit frequency, i.e. we set
	\begin{align*}
		g^M_i = (P_M(\ph_i f))(M^{-1} \cdot).
	\end{align*}
	Using polar coordinates, we develop $\hat{g}^M_i$ in the good frame as
	\begin{align*}
		\hat{g}^M_i(\rho \theta) = \sum_{k = 0}^\infty \sum_{l = 1}^{N_k} \hat{c}^{M,i}_{k,l}(\rho) b_{k,l}(\theta)
	\end{align*}
	where each $\hat{c}^{M,i}_{k,l}$ is supported in $(\frac{1}{2},2)$. Applying Theorem~3.10 from~\cite{SW71} again, we obtain the representation
	\begin{align*}
		g^M_i(r \theta) = \sum_{k = 0}^\infty \sum_{l = 1}^{N_k} a_k r^{-1} b_{k,l}(\theta) \int_0^\infty \hat{c}^{M,i}_{k,l}(\rho) J_{k+1}(r \rho) \rho^2 \dd \rho
	\end{align*}
	with
	\begin{align*}
		\|g^M_i\|_{L^2_x}^2 \sim \sum_{k = 0}^\infty \sum_{l = 1}^{N_k} \|\hat{c}^{M,i}_{k,l}\|_{L^2(\rho^3 \dd \rho)}^2.
	\end{align*}
	Rescaling to frequency $M$, recalling that $(\ph_i)_{i \in \Z^4}$ is a partition of unity, and employing the unit-scale decomposition in frequency space, we obtain in analogy to~\eqref{eq:decf}
	\begin{align}
	P_M (\ph_i f) &= \sum_{j \in \Z^4} \sum_{k = 0}^\infty \sum_{l = 1}^{N_k} a_k M^{-1} P_j \Big[ r^{-1} b_{k,l}(\theta) \int_0^\infty \hat{c}_{k,l}^{M,i}(\rho) J_{k+1}(M r \rho) \rho^2 \dd \rho \Big], \label{eq:decphif} \\	
		P_M f &= \sum_{i,j \in \Z^4} \sum_{k = 0}^\infty \sum_{l = 1}^{N_k} a_k M^{-1} P_j \Big[ r^{-1} b_{k,l}(\theta) \int_0^\infty \hat{c}_{k,l}^{M,i}(\rho) J_{k+1}(M r \rho) \rho^2 \dd \rho \Big]. \label{eq:decf2}
	\end{align}
	For every $M \in 2^\Z$ we take a sequence of independent, real-valued, mean-zero random variables $(X^M_{i,j,k,l})_{i,j \in \Z^4,l \in \{1, \ldots, N_k\}, k \in \N_0}$ on a probability space $(\Omega,\cA,\PP)$ such that there exists a constant $c > 0$ with
	\begin{align*}
		\int_{\R} e^{\gamma x} \dd \mu^M_{i,j,k,l}(x) \leq e^{c\gamma^2}
	\end{align*}
	for all $\gamma \in \R$, $l \in \{1, \ldots, N_k\}$, $k \in \N_0$, and $i,j \in \Z^4$, where $\mu^M_{i,j,k,l}$ is the distribution of $X^M_{i,j,k,l}$. Considering~\eqref{eq:decf2}, we define the second \emph{randomization} of $f$ as
	\begin{align}
		\label{eq:RandomizationFreqAngleSpace}
		f^{\tilde{\omega}} &= \sum_{M \in 2^{\Z}} \sum_{i,j \in \Z^4} \sum_{k = 0}^\infty \sum_{l = 1}^{N_k} X^{M}_{i,j,k,l}(\tilde{\omega}) a_k M^{-1} \nonumber \\
		&\hspace{10em} \cdot P_j \Big[ r^{-1} b_{k,l}(\theta) \int_0^\infty \hat{c}_{k,l}^{M,i}(\rho) J_{k+1}(M r \rho) \rho^2 \dd \rho \Big].
	\end{align}

	Again, this randomization does not imply higher Sobolev regularity in general: Assume that $(\Omega, \cA, \PP)$ is the product space of $(\Omega_i, \cA_i, \PP_i)$, $i = 1,2,3$, and that $X^{M}_{i,j,k,l}(\tilde{\omega}) = X_j(\omega_1) X^{M}_{k,l}(\omega_2) X_i(\omega_3)$ for all $\tilde{\omega} = (\omega_1, \omega_2, \omega_3) \in \Omega$, where $(X_j)_j$, $(X^{M}_{k,l})_{M,k,l}$, and $(X_i)_i$ are independent and identically distributed Gaussian random variables on $\Omega_1$, $\Omega_2$, and $\Omega_3$, respectively. One can then show that for $s > 0$, $\sum_{i\in \Z^4} X_i(\omega_3) \ph_i f \in H^s(\R^4)$ for all $\omega_3$ from a subset of $\Omega_3$ with positive measure implies that $\ph_i f$ belongs to $H^s(\R^4)$ for all $i \in \Z^4$. Applying adaptions of the proof of~\cite[Lemma~B.1]{BT2008I} successively in $\omega_3$, $\omega_2$, and $\omega_1$, we obtain that if $f \notin H^s(\R^4)$, then $f^{\tilde{\omega}} \notin H^s(\R^4)$ almost surely for every $s > 0$.
%%%%%%%%%%%%%%%%%%%%%%%%%%%%%%%%%%%%%%%%%%%%%%%%%%%%%%%%%%%%%%%%%%%%%%%%%%%%%%%%%%%%%%%%%%%%%%%%%%%%%%%%%%
%%%%%%%%%%%%%%%%%%%%%%%%%%%%%%%%%%%%%%%%%%%%%%%%%%%%%%%%%%%%%%%%%%%%%%%%%%%%%%%%%%%%%%%%%%%%%%%%%%%%%%%%%%
	\subsection{Main results}
	The first main result states that the energy-critical cubic NLS in four dimensions is almost surely locally wellposed for scaling-supercritical initial data from $H^s(\R^4)$ with $\frac{1}{7} < s < 1$. As usual, the sign in front of the nonlinearity does not affect the local theory and the result applies to both the defocusing and the focusing equation. We refer to~\eqref{eq:DefX} below for the definition of the function space $X(I)$ appearing in the theorem, which is a subspace of $L^2(I, \dot{B}^1_{4,2}(\R^4))$.
	\begin{thm}
	\label{thm:locwp}
	Let $\frac{1}{7} < s < 1$. Take $f \in H^s(\R^4)$ and let $f^\om$ denote the randomization of $f$ defined in~\eqref{eq:DefRandomization1}. Then for almost every $\om \in \Omega$ there is an open interval $I$ containing $0$ and a unique solution
	\begin{align*}
		u \in e^{i t \Delta} f^\om + C(I, \dot{H}^1(\R^4)) \cap X(I)
	\end{align*}
of the cubic nonlinear Schr{\"o}dinger equation
	\begin{equation}
		\label{eq:nlSchrdfandf}
		\begin{aligned}
			\imu \partial_t u + \Delta u &= \pm |u|^2 u, \\
			u(0) &= f^\om.
		\end{aligned}
	\end{equation}
\end{thm}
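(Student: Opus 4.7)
The plan is the Da Prato--Debussche decomposition: set $F := \eul^{\imu t \Delta} f^\om$ and look for the solution as $u = F + v$, where the nonlinear remainder $v$ is sought in the higher-regularity space $C(I, \dot{H}^1(\R^4)) \cap X(I)$ and satisfies
\begin{equation*}
\imu \partial_t v + \Delta v = \pm |F+v|^2(F+v), \qquad v(0) = 0.
\end{equation*}
Expanding the cubic nonlinearity produces the purely deterministic piece $|v|^2 v$, which is handled by the standard energy-critical local theory in $X(I)$, and a sum of multilinear terms that each contain at least one factor of $F$ and will be controlled using inputs from the randomization.

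The first step of the argument is to establish probabilistic space-time estimates for $F$ on a fixed interval $I_0$: combining the subgaussian moment bound~\eqref{eq:ConditionRandomVariables} with Khintchine-type inequalities and the linear Schr\"odinger estimates applied to the individual building blocks of~\eqref{eq:decf}, one shows that on an event of full measure
\begin{equation*}
\bigl\| \langle \nabla \rangle^{\sigma(q,r)} F \bigr\|_{L^q_t L^r_x(I_0 \times \R^4)} \le C(\om) \|f\|_{H^s(\R^4)}
\end{equation*}
for a suitable family of space-time exponents $(q,r)$, where $\sigma(q,r)$ is strictly larger than the corresponding deterministic Sobolev exponent. The gains come simultaneously from the unit-scale Bernstein improvement afforded by the Wiener decomposition~\eqref{eq:UnitScaleDecFreqSpace} and from the improved angular Lebesgue integrability~\eqref{eq:BoundednessGoodFrame} of the good-frame expansion~\eqref{eq:DecompositionAngular}.

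With these probabilistic inputs in hand, I would close a Banach fixed-point argument on a short interval $I = [-T, T] \subset I_0$, for the Duhamel map
\begin{equation*}
\Phi(v)(t) = \mp \imu \int_0^t \eul^{\imu(t-s)\Delta} \bigl(|F+v|^2 (F+v)\bigr)(s) \dd s
\end{equation*}
in a ball of radius $\eta(\om)$ in $C(I, \dot{H}^1) \cap X(I)$. The deterministic piece $|v|^2 v$ is estimated by standard Strichartz and fractional Leibniz bounds in $X(I)$ and is quadratic in $\eta(\om)$. Each mixed term is estimated by H\"older and Leibniz, placing derivatives on $v$ whenever possible to use its energy regularity and placing the remaining factors of $F$ in norms supplied by the previous step; these terms carry either a small positive power of $T$ from time integration or a power of $\eta(\om)$ from factors of $v$, so that choosing $\eta(\om)$ small and then $T = T(\om)$ small enough turns $\Phi$ into a strict contraction. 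Uniqueness of $v$ in the ball, hence of $u$ in the claimed class, is automatic from the fixed-point argument.

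The main obstacle is reaching the low regularity threshold $s > 1/7$. The bottleneck lies in the most singular multilinear term (typically the one bilinear in $F$ and linear in $v$, or the purely cubic $|F|^2 F$ upon differentiation), where nearly two powers of regularity must be extracted from $F$ in a space-time norm whose total probabilistic gain exceeds $1 - s$. Neither the Wiener nor the angular randomization alone is strong enough to produce this gain at $s$ close to $1/7$; the bulk of the work is therefore to optimize the choice of Strichartz exponents for $F$ and for $v$ in each multilinear term, so that the probabilistic estimates of the first step and the deterministic multilinear bounds of the second fit together jointly at the claimed threshold.
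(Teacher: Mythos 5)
Your overall skeleton---writing $u = F + v$ with $F = e^{\imu t\Delta} f^\om$, proving probabilistic space-time bounds for $F$, and closing a contraction for $v$ at $\dot H^1$ regularity---is the same as the paper's. The genuine gap lies exactly where the threshold $s>\frac17$ has to be produced, and your proposal both leaves that step open and points at the wrong mechanism. You attribute the probabilistic gain to the unit-scale Bernstein improvement of the Wiener decomposition together with the angular integrability bound \eqref{eq:BoundednessGoodFrame}; that combination is essentially what earlier works exploited and it does not reach below $s=\frac13$ (respectively $\frac35$ without local smoothing). The actual mechanism here is that the angular randomization gives access to the \emph{extended-range} Strichartz estimates \eqref{eq:ExtendedRange} of radial type, by reducing, through the good-frame expansion, to the uniform Bessel-operator bounds \eqref{eq:EstimateT2kpl1}; and to combine this with the Wiener randomization one must \emph{not} pass through unit-scale Bernstein but use the square function estimate of Lemma~\ref{lem:SquareFunction} in the spherically averaged spaces $\cL^p_r L^\mu_\theta$, which is what Proposition~\ref{prop:ProbabilisticEstimate} does. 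Choosing $L^2$ in time, this gains $\frac37-$ derivatives for $F$ by scaling.

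Even that single gain would only require $s>\frac47$, not $\frac17$. The second, equally essential ingredient, absent from your plan, is that the extended range is used a \emph{second} time on the Duhamel side: the inhomogeneous estimate of Lemma~\ref{lem:StrichartzEstimatesAv} allows one to measure those trilinear terms in which $F$ carries the highest frequency in the norm $L^{\frac{2}{1+2\delta}}_t \cL^{\frac{14}{9+\delta}}_r L^2_\theta$, the second component of the $G(I)$-norm, which gains another $\frac37-$ derivatives via \eqref{eq:StrichartzinXNonlin}, for a total of $\frac67-$ and hence the threshold $\frac17$. Your fixed-point step, which estimates the mixed terms by standard Strichartz and fractional Leibniz bounds and draws smallness from powers of $T$ or of the ball radius, never produces this second gain, so the bottleneck terms (those of the form $F_1 v_2 v_3$, $F_1 F_2 v_3$, $F_1F_2F_3$ with $F$ at highest frequency) cannot be closed for $s$ near $\frac17$. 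To complete the argument along the paper's lines you need the frequency-localized trilinear estimates of Lemma~\ref{lem:TrilinearEstimates} in the two-component $G(I)$-norm, the local theory for the forced equation (Theorem~\ref{thm:LWPForcedEquation}), and the verification that $e^{\imu t\Delta} f^{\om} \in Y(\R)$ almost surely via Corollary~\ref{cor:RandomizationInY}; smallness on a short interval then follows from absolute continuity of $\|F\|_{Y(I)}$ and $v_0=0$ rather than from positive powers of $T$.
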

For the proof we set $u(t) = e^{i t \Delta} f^\om + v(t)$ and study the forced cubic NLS
\begin{equation}
		\label{eq:ForcednlSchrdfandf}
		\begin{aligned}
			\imu \partial_t v + \Delta v &= \pm |F+v|^2 (F+v), \\
			v(0) &= v_0
		\end{aligned}
	\end{equation}
for the nonlinear part $v$. We develop a suitable functional framework in which we prove local wellposedness for~\eqref{eq:ForcednlSchrdfandf}, which then implies Theorem~\ref{thm:locwp} if the linear evolution $e^{\imu t \Delta} f^\om$ almost surely fulfills the assumptions on the forcing $F$. For the latter, it is crucial to exploit that the randomization improves the space-time integrability of $e^{\imu t \Delta} f^\om$ in various ways.

For example, the Wiener randomization implies a unit-scale Bernstein inequality which allows to move from high spatial integrability to  lower spatial integrability without losing derivatives. Combined with the standard deterministic Strichartz estimates, we obtain almost surely much better space-time integrability of the linear flow. This improvement was combined with the bilinear refinement of Strichartz estimates in~\cite{BOP2015I, BOP2015II} to prove almost sure local wellposedness of~\eqref{eq:nlSchrdfandf} with initial data from $H^s(\R^4)$ with $\frac{3}{5} < s < 1$. The regularity threshold was lowered to $\frac{1}{3} < s < 1$ in~\cite{DLM2019} by the use of local smoothing estimates.

In this work, we make use of a new mechanism to gain regularity for the nonlinear part of the equation. We show that, as in the case of the wave equation~\cite{BK2019, S2020}, the angular randomization gives access to Strichartz estimates with (almost) the same range of exponents as for the radial Schr{\"o}dinger equation. In the Schr{\"o}dinger case, however, we can choose space-time norms which gain derivatives by scaling. In fact, we can gain $\frac{3}{7}-$ derivatives in that way. Using the optimal norms both for the homogeneous and the inhomogeneous estimate, we increase the regularity by $\frac{6}{7}-$ derivatives. In contrast to~\cite{DLM2019}, where maximal function estimates had to be employed, the combination of the Wiener and the angular randomization is so flexible that we do not lose any derivatives when closing the estimates. Although the local smoothing norms lead to a higher regularity gain, we thus obtain the lower threshold of $\frac{1}{7}$. Consequently, a key observation for the proof is the probabilistic estimate for $e^{\imu t \Delta} f^\om$ with the randomization~\eqref{eq:DefRandomization1} in Proposition~\ref{prop:ProbabilisticEstimate}, where we show that we can exploit the advantages of the Wiener and the angular randomization simultaneously.

We next come to our second main result which establishes almost sure scattering for solutions of~\eqref{eq:CubicNLSDef} with randomized initial data from $H^s(\R^4)$ with $\frac{5}{7} < s < 1$, where the randomization from~\eqref{eq:RandomizationFreqAngleSpace} is applied. In particular, we do not assume that the initial data before the randomization is radially symmetric.
\begin{thm}
	\label{thm:AlmostSureScattering}
	Let $\frac{5}{7} < s < 1$. Take $f \in H^s(\R^4)$ and let $f^{\tilde{\omega}}$ denote the randomization of $f$ from~\eqref{eq:RandomizationFreqAngleSpace}. Then for almost all $\tilde{\omega} \in \Omega$ there is a unique global solution 
	\begin{align*}
		u \in e^{\imu t \Delta} f^{\tilde{\omega}} + C(\R, \dot{H}^1(\R^4))\cap X(\R)
	\end{align*}
	which scatters both forward and backward in time, i.e., there exist $v_{\pm} \in \dot{H}^1(\R^4)$ such that 
	\begin{align*}
		\lim_{t \rightarrow \pm \infty} \|u(t) - e^{\imu t \Delta}(f^{\tilde{\omega}} + v_{\pm})\|_{\dot{H}^1} = 0.
	\end{align*}
\end{thm}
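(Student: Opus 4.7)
The plan is to decompose $u = F + v$ with $F(t) = \eul^{\imu t \Delta} f^{\tilde{\om}}$ and to upgrade the local solution $v$ of the forced cubic NLS~\eqref{eq:ForcednlSchrdfandf} (with $v(0)=0$) to a global solution in $C(\R,\dot H^1(\R^4)) \cap X(\R)$ endowed with a finite a priori bound on the scattering-critical space--time norm. Scattering of $u$ modulo the linear evolution $F$ then follows in the standard way from this bound. Throughout, I work on an event $\Omega_R$ of probability at least $1 - C \eul^{-c R^2}$ and take $R \to \infty$ at the end.

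The first step is to derive a package of probabilistic bounds for $F$ by exploiting the three randomizations simultaneously, refining the ideas used for the local problem in Proposition~\ref{prop:ProbabilisticEstimate}. The Wiener randomization yields the unit-scale Bernstein improvement; the angular randomization in the good frame yields (with an arbitrarily small loss) Strichartz estimates in the full radial exponent range together with a derivative gain of $\tfrac{6}{7}-$; and the additional physical-space decomposition contributes $\ell^2_i$-type summability of spatial pieces of $F$. This last mechanism is what distinguishes the scattering setup from the local theory and will be essential in the Morawetz argument below.

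Next, on any interval $[0,T] \subset I_{\max}$ I would set up two coupled a priori estimates for $v$. Differentiating the energy $E(v) = \tfrac12 \|\nabla v\|_{L^2}^2 + \tfrac14 \|v\|_{L^4}^4$ along the forced equation gives
\begin{equation*}
	\tfrac{d}{dt} E(v) = -\Im \int_{\R^4} \bigl( |F+v|^2(F+v) - |v|^2 v \bigr)\, \overline{\Delta v} \, \dd x,
\end{equation*}
which expands as a finite sum of multilinear terms of the form $F^a v^b \nabla v$ with $a\geq 1$ and $a+b = 3$. Each is bounded by Hölder in space--time, placing $\nabla v$ against $E(v)^{1/2}$, the remaining $v$-factors into an $X([0,T])$-type norm, and all $F$-factors into the probabilistic norms from Step~1. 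Grönwall then controls $\sup_{t \leq T} E(v(t))$ in terms of the probabilistic bounds and $\|v\|_{L^3_{t,x}([0,T]\times\R^4)}$. In parallel I would run the interaction Morawetz identity for $u = F+v$; after separating $F$ and $v$ and absorbing $\|F\|_{L^3_{t,x}}$ into the probabilistic side, the identity delivers control of $\|v\|_{L^3_{t,x}([0,T]\times\R^4)}^3$ in terms of $\sup_{t \leq T} E(v(t))$ and further quadrilinear Morawetz error terms containing $F$ and $v$.

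The two estimates are then closed against each other via a continuity/bootstrap argument on a master quantity such as $\Lambda(T) = \sup_{t \leq T} E(v(t)) + \|v\|_{L^3_{t,x}([0,T]\times\R^4)}^3$. This closure is what I expect to be the principal obstacle: the Morawetz error terms carry one to three factors of $F$ and in four dimensions they saturate the available derivative budget, so each must be dispatched through a probabilistic norm enjoying both the $\tfrac{6}{7}-$ angular gain and the physical-space decay. The balance of these two mechanisms is what produces the threshold $s > \tfrac{5}{7}$. Once $\Lambda(T)$ is bounded independently of $T$, a standard blow-up criterion forces $I_{\max} = \R$ and finiteness of $\|v\|_{X(\R)}$ yields scattering states $v_\pm \in \dot H^1$ via the usual Strichartz argument. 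The backward-in-time statement follows identically on $(-\infty,0]$, and unionizing the events $\Omega_R$ over $R \to \infty$ completes the proof.
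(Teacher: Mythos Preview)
Your proposal diverges from the paper's proof in a fundamental way, and the divergence is precisely at the point that motivates the paper.

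The paper does \emph{not} use an interaction Morawetz estimate. The double bootstrap you describe---energy plus Morawetz, closed on a quantity like $\sup E(v)+\|v\|_{L^3_{t,x}}^3$---is the route of \cite{KMV2019,DLM2019}, and the paper explicitly recalls that this route forced $\nabla F$ into spatially weighted norms which in turn required the original data to be radially symmetric. Your proposal asserts that the physical-space randomization will now absorb the Morawetz error terms without radial symmetry, but you give no mechanism for this, and it is far from clear: those error terms carry momentum-type weights that do not obviously match the $\ell^2_i$ summability the physical-space decomposition provides. This is the principal gap in your sketch.

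The paper sidesteps the issue entirely. It uses \emph{only} the energy increment (Proposition~\ref{prop:UniformEnergyEstimate}): after the rewriting
\[
\partial_t E(v) = -\tfrac14\,\partial_t\!\int(|F+v|^4-|F|^4-|v|^4)\,\dd x - \Im\!\int \nabla\overline{F}\cdot\nabla\bigl[|F+v|^2(F+v)-|F|^2F\bigr]\dd x,
\]
every term is bounded by H\"older alone, the worst being $\|(\nabla F)(\nabla v)v^2\|_{L^1_tL^1_x}\le \|\nabla F\|_{L^1_tL^\infty_x}\,E(v)^{3/2}$. The entire burden is therefore shifted to proving $\nabla F\in L^1_tL^\infty_x$ almost surely, which is Proposition~\ref{prop:L1LinftyEstimate}; the threshold $s>\tfrac57$ arises there, from interpolating the $\tfrac37-$ derivative gain of the angular randomization (Corollary~\ref{cor:RandomizationInY}) against the dispersive decay furnished by the physical-space randomization (Proposition~\ref{prop:ProbabilisticEstimate2}). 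Once $\sup E(v)<\infty$, scattering is \emph{not} obtained from a direct $X(\R)$ bound as you suggest, but via the conditional scattering result Proposition~\ref{prop:ConditionalScattering}, which feeds the energy bound into the deterministic Ryckman--Visan space--time bound~\eqref{eq:SpaceTimeBoundByEnergy} through the perturbation theory of Lemma~\ref{lem:LongTimePerturbation}. You omit this step as well.
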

For the proof of this result, we follow the same strategy as~\cite{DLM2020, KMV2019, DLM2019, Br2019}. Starting from the local wellposedness theory of~\eqref{eq:ForcednlSchrdfandf}, we develop a perturbation theory in the corresponding functional framework, which yields a conditional scattering result: If the energy of $v$, i.e. $E(v)$, is bounded on its maximal interval of existence, the solution exists globally and scatters both forward and backward in time. See Proposition~\ref{prop:ConditionalScattering} for the precise formulation. The proof of Theorem~\ref{thm:AlmostSureScattering} thus reduces to the task of controlling the energy of $v$. We point out that the energy of $v$ is not conserved as $v$ is not a solution of the cubic NLS.

To control the energy of $v$, we then estimate the energy increment $\partial_t E(v(t))$. The most difficult term we get with this approach is $\| (\nabla F) (\nabla v) v^2 \|_{L^1_t L^1_x}$. If one only has the energy to estimate the factors involving $v$, one is led to control $\nabla F$ in $L^1_t L^\infty_x$.

In~\cite{KMV2019} and~\cite{DLM2019} this problem is overcome by not only invoking the energy to control $v$ but also a Morawetz type inequality to avoid this $L^1_t L^\infty_x$-estimate. Both the energy and the Morawetz term are then controlled by a double bootstrap argument. However, this approach led to place $\nabla F$ in spatially weighted spaces which in turn required to randomize \emph{radially symmetric} data so that the linear flow satisfied the assumptions on $F$ almost surely. This led to almost sure scattering for solutions of~\eqref{eq:CubicNLSDef} with Wiener-randomized radially symmetric initial data from $H^s(\R^4)$ with $\frac{5}{6} < s < 1$ in~\cite{KMV2019} and $\frac{1}{2} < s < 1$ in~\cite{DLM2019}.

In order to get rid of the radial symmetry assumption, we indeed only use the energy to estimate the factors involving $v$ in $\| (\nabla F) (\nabla v) v^2 \|_{L^1_t L^1_x}$, assuming that $\nabla F$ belongs to $L^1_t L^\infty_x$. Hence, the key ingredient in the proof of Theorem~\ref{thm:AlmostSureScattering} is to show that $\nabla e^{\imu t \Delta} f^{\tilde{\omega}}$ belongs to $L^1_t L^\infty_x$ almost surely. In fact, this is the only condition on $F$ for which we need the randomization~\eqref{eq:RandomizationFreqAngleSpace} in order that the linear flow of the randomized data satisfies it almost surely. For all other assumptions the randomization~\eqref{eq:DefRandomization1} would suffice.

To estimate $\nabla e^{\imu t \Delta} f^{\tilde{\omega}}$ in $L^1_t L^\infty_x$, we exploit that the randomization~\eqref{eq:RandomizationFreqAngleSpace} yields the same improved decay of the linear flow as the pure physical-space randomization in~\cite{NY2019,S2020} (see Proposition~\ref{prop:ProbabilisticEstimate2}). On the other hand, we can still gain regularity for the linear flow as for the randomization~\eqref{eq:DefRandomization1} (see Proposition~\ref{prop:ProbabilisticEstimate}). However, we cannot exploit the different advantages simultaneously but we have to interpolate between the different effects, which leads to the regularity restriction $s > \frac{5}{7}$.

\begin{rem}
	\label{rem:ComparisonToWave}
	In~\cite{Br2019}, almost sure scattering for the cubic nonlinear wave equation in $\R^4$ without radial symmetry was shown using a randomization based on a unit-scale decomposition in physical and frequency space, i.e.~\eqref{eq:RandomizationFreqAngleSpace} without the randomization in the angular variable. In contrast to the nonlinear wave equation, the energy of~\eqref{eq:CubicNLSDef} does not control the time derivative of the solution and the (almost sure) regularizing effect of the linear Schr{\"o}dinger flow we obtain from the randomization in the angular variable is an important component in the proof of the almost sure scattering result. On the other hand, using randomization~\eqref{eq:RandomizationFreqAngleSpace} for the cubic defocusing nonlinear wave equation in $\R^4$, one can lower the regularity threshold for almost sure scattering from~\cite{Br2019} with a much simpler proof, see~\cite{Swave2021}.
\end{rem}

The rest of the paper is organized as follows. In Section~\ref{sec:NotationPrelim} we fix some notation, introduce the functional framework for the forced equation~\eqref{eq:ForcednlSchrdfandf}, and collect several deterministic estimates which will be used in the following. The crucial probabilistic estimates for the linear Schr{\"o}dinger flow with data randomized with the techniques introduced in this work are presented in Section~\ref{sec:ProbabilisticEst}. The almost sure local wellposedness theory is developed in Section~\ref{sec:AlmostSureLocWP}. The perturbation theory and the conditional scattering result are derived in Section~\ref{sec:CondScattering}. In Section~\ref{sec:AlmostSureScattering} we then prove the almost sure scattering result.
%%%%%%%%%%%%%%%%%%%%%%%%%%%%%%%%%%%%%%%%%%%%%%%%%%%%%%%%%%%%%%%%%%%%%%%%%%%%%%%%%%%%%%%%%%%%%%%%%%%%%%%%%%
%%%%%%%%%%%%%%%%%%%%%%%%%%%%%%%%%%%%%%%%%%%%%%%%%%%%%%%%%%%%%%%%%%%%%%%%%%%%%%%%%%%%%%%%%%%%%%%%%%%%%%%%%%
%%%%%%%%%%%%%%%%%%%%%%%%%%%%%%%%%%%%%%%%%%%%%%%%%%%%%%%%%%%%%%%%%%%%%%%%%%%%%%%%%%%%%%%%%%%%%%%%%%%%%%%%%%
\section{Notation and preliminaries}
\label{sec:NotationPrelim}

In this section we fix some notation, collect several deterministic estimates, and introduce the functional framework for the local wellposedness theory of the forced cubic NLS~\eqref{eq:ForcednlSchrdfandf}.

\subsection{Notation}
\label{subsec:Notation}

We write $A \lesssim B$ if there is a constant $C > 0$ such that $A \leq C B$ and $A \sim B$ if $A \lesssim B$ and $B \lesssim A$.

Fix an even function $\eta_0 \in C_c^\infty(\R)$ such that $0 \leq \eta_0 \leq 1$, $\eta_0(x) = 1$ for $|x| \leq \frac{5}{4}$, and $\eta_0(x) = 0$ for $|x| \geq \frac{8}{5}$. For every dyadic number $N \in 2^\Z$ we define the symbols
\begin{align*}
	\chi_N(\xi) = \eta_0(|\xi|/N) - \eta_0(2|\xi|/N), \qquad \chi_{\leq N}(\xi) = \eta_0(|\xi|/N)
\end{align*}
as well as the standard Littlewood-Paley projectors
\begin{align*}
	P_N f = \cF^{-1}(\chi_N \hat{f}), \qquad P_{\leq N} f = \cF^{-1}(\chi_{\leq N} \hat{f}),
\end{align*}
where $\hat{f} = \cF f$ denotes the Fourier transform of $f$. We always use upper case letters (which represent a dyadic number) to indicate a Littlewood-Paley projector, while a lower case letter (representing an element of $\Z^4$) refers to the operators introduced in~\eqref{eq:DefPk}. We further define the fattened Littlewood-Paley projectors
\begin{align*}
	\tilde{P}_N = \sum_{|\log_2 (M/N)| \leq 4}P_{M}.
\end{align*}

Let $p,\mu \in [1,\infty]$ and set $\cL^p(0,\infty) = L^p((0,\infty), r^3 \dd r)$. We will employ the spaces $\cL^p L^\mu(\R^4)$, anisotropic in the radial and the angular variable, which are defined by the norms
\begin{align*}
	\|f\|_{\cL^p_r L^\mu_\theta} = \Big(\int_0^\infty \Big(\int_{S^3} |f(r\theta)|^\mu \dd \theta \Big)^{\frac{p}{\mu}} r^3 \dd r \Big)^{\frac{1}{p}}
\end{align*}
with the usual adaptions in the case $p = \infty$ or $\mu = \infty$. 

Let $I \subseteq \R$ be an interval. If $X$ is a function space over $\R^4$ with norm $\|\cdot\|_X$, we write $\| \cdot \|_{L^q_t X}$ for the norm
\begin{align*}
	\|f\|_{L^q_t X} = \Big( \int_I \|f(t)\|_X^q \dd t \Big)^{\frac{1}{q}}
\end{align*}
of $L^q(I, X)$ without specifying the underlying interval $I$ if it is clear from the context. If we want to highlight the time interval, we write $\|f\|_{L^q_I X}$.

We use the standard homogeneous Besov space $\dot{B}^s_{p,2}(\R^4)$ as well as the Besov-type spaces $\dot{B}^s_{(p,\mu),2}(\R^4)$ and $\dot{B}^s_{q,(p,\mu),2}(I \times \R^4)$ defined by the norms
\begin{align*}
	\|f\|_{\dot{B}^s_{p,2}} &= \Big(\sum_{N \in 2^\Z} N^{2s} \|P_N f\|_{L^p}^2 \Big)^{\frac{1}{2}}, \qquad
	\|f\|_{\dot{B}^s_{(p,\mu),2}} = \Big(\sum_{N \in 2^\Z} N^{2 s} \|P_N f\|_{\cL^p_r L^\mu_\theta}^2 \Big)^{\frac{1}{2}}, \\
	\|g\|_{\dot{B}^s_{q,(p,\mu),2}} &= \Big(\sum_{N \in 2^\Z} N^{2 s} \|P_N g\|_{L^q_t \cL^p_r L^\mu_\theta}^2 \Big)^{\frac{1}{2}}
\end{align*}
for $s \in \R$. Finally, we set $\dot{B}^s_{q,p,2}(I \times \R^4) = \dot{B}^s_{q,(p,p),2}(I \times \R^4)$ in the case $\mu = p$.

\subsection{Deterministic estimates} We start by collecting the Strichartz estimates we will employ later. To that purpose we recall that in dimension $d = 4$ a pair $(q,p)$ is \emph{Schr{\"o}dinger admissible} if
\begin{align*}
	2 \leq q,p \leq \infty, \qquad \frac{2}{q} + \frac{4}{p} = 2.
\end{align*}
We then have the classical Strichartz estimates, see~\cite{KT1998}, where we denote the dual exponent of an integrability exponent $p$ by $p'$.
\begin{lem}
	\label{lem:StrichartzEstimates}
	Let $I \subseteq \R$ be an interval and $t_0 \in I$.
	Let $(q,p)$, $(\tilde{q}, \tilde{p})$ be Schr{\"o}dinger admissible. Then
		\begin{align*}
		\| e^{\imu t \Delta} f\|_{L^q_t L^p_x} \lesssim \|f\|_{L^2}, \qquad
		\Big\| \int_{t_0}^t e^{\imu (t-s) \Delta} g(s) \dd s \Big\|_{L^q_t L^p_x} \lesssim \|g\|_{L^{\tilde{q}'}_t L^{\tilde{p}'}_x}.
		\end{align*}
\end{lem}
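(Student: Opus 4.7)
The plan is to invoke the abstract Strichartz framework of Keel and Tao, whose two inputs are classical for the Schr\"odinger group in dimension $d = 4$. First, unitarity of $e^{\imu t \Delta}$ on $L^2$ yields the energy identity $\|e^{\imu t \Delta} f\|_{L^2_x} = \|f\|_{L^2_x}$. Second, the explicit convolution representation $e^{\imu t \Delta} f(x) = (4\pi \imu t)^{-2} \int_{\R^4} e^{\imu|x-y|^2/(4t)} f(y) \dd y$ immediately yields the dispersive bound $\|e^{\imu t \Delta} f\|_{L^\infty_x} \lesssim |t|^{-2} \|f\|_{L^1_x}$. Complex interpolation between these two bounds then provides $\|e^{\imu t \Delta} f\|_{L^p_x} \lesssim |t|^{-2(1-2/p)} \|f\|_{L^{p'}_x}$ for every $p \in [2,\infty]$.

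Next I would run the standard $TT^*$ argument with $T f := e^{\imu t \Delta} f$, so that $T^* F = \int_\R e^{-\imu s \Delta} F(s) \dd s$ and $T T^* F(t) = \int_\R e^{\imu(t-s)\Delta} F(s) \dd s$. The homogeneous estimate is then equivalent to boundedness of $T T^* : L^{q'}_t L^{p'}_x \to L^q_t L^p_x$, while the (non-retarded) inhomogeneous estimate is equivalent to boundedness of $T T^* : L^{\tilde q'}_t L^{\tilde p'}_x \to L^q_t L^p_x$. For non-sharp admissible pairs the dispersive bound combined with the Hardy--Littlewood--Sobolev inequality in the time variable yields these mapping properties on the full line $\R$ directly.

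The main obstacle is the \emph{double endpoint} $q = \tilde q = 2$, $p = \tilde p = 4$, which is admissible since $2/2 + 4/4 = 2$ in $d = 4$. Here the Hardy--Littlewood--Sobolev route would require the borderline $L^1 \to L^1$ mapping of the Riesz potential, which fails. This is exactly the setting handled by Keel and Tao via a bilinear atomic decomposition of the time variable combined with a real interpolation argument, which produces the double-endpoint inhomogeneous estimate; the endpoint homogeneous estimate then follows by one more duality and $T T^*$ step.

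Finally, to pass from $\R$ to the retarded estimate on an arbitrary subinterval $I$ with base point $t_0 \in I$, I would apply the Christ--Kiselev lemma away from the double endpoint, which converts the full-line inhomogeneous estimate into its truncated version as long as $\tilde q' < q$. At the double endpoint the Keel--Tao argument already provides the retarded estimate directly, so no additional work is required.
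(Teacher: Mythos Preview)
Your sketch is correct and matches the paper's approach exactly: the paper does not prove this lemma at all but simply cites Keel--Tao~\cite{KT1998}, whose argument is precisely the one you outline (unitarity plus the $|t|^{-2}$ dispersive bound fed into the abstract $TT^*$/bilinear framework, with the endpoint handled by the atomic decomposition). There is nothing to add.
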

Besides the standard Strichartz estimates, we also use the ones in spherically averaged spaces. Therefore, we also consider the extended range
\begin{equation}
\label{eq:ExtendedRange}
	2 \leq q,p \leq \infty, \qquad \frac{1}{q} \leq \frac{7}{2}\Big(\frac{1}{2} - \frac{1}{p}\Big), \qquad (q,p) \neq \Big(2, \frac{14}{5} \Big)
\end{equation}
for exponent pairs $(q,p)$. We obtain the following estimate. 
\begin{lem}
	\label{lem:StrichartzEstimatesAv}
	Let $I \subseteq \R$ be an interval and $t_0 \in I$.
 Let $(q,p)$ be Schr{\"o}dinger admissible and $(\tilde{q}, \tilde{p})$ satisfy~\eqref{eq:ExtendedRange} with $q > \tilde{q}'$. Then
		\begin{align*}
			\Big\|\int_{t_0}^t e^{\imu (t-s) \Delta} P_N g(s) \dd s \Big\|_{L^q_t L^p_x} \lesssim N^{2-\frac{2}{\tilde{q}}-\frac{4}{\tilde{p}}}\|P_N g\|_{L^{\tilde{q}'}_t \cL^{\tilde{p}'}_r L^2_\theta}
		\end{align*}
		for all $N \in 2^\Z$.
\end{lem}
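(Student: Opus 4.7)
The plan is the classical \emph{homogeneous $\Rightarrow$ dual $\Rightarrow$ inhomogeneous} bootstrap based on $TT^*$-duality and the Christ--Kiselev lemma; the only point of care is to track the scaling factor $N^{2 - 2/\tilde q - 4/\tilde p}$ coming from the non-admissibility of $(\tilde q, \tilde p)$ in the standard sense.

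First, I would invoke the frequency-localized homogeneous spherically averaged Strichartz estimate
\[
  \|e^{\imu t \Delta} P_N f\|_{L^{\tilde q}_t \cL^{\tilde p}_r L^2_\theta} \lesssim N^{2 - 2/\tilde q - 4/\tilde p} \|P_N f\|_{L^2_x},
\]
valid for every $(\tilde q, \tilde p)$ in the extended range~\eqref{eq:ExtendedRange}. The exponent on $N$ is exactly the scaling exponent $s_c(\tilde q, \tilde p) = 2 - 2/\tilde q - 4/\tilde p$ of the Schr\"odinger flow on $\R^4$, and the excluded endpoint $(2, 14/5)$ is the sharp corner on the boundary line $1/\tilde q = \tfrac{7}{2}(\tfrac{1}{2} - 1/\tilde p)$. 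Since $\cL^{\tilde p}_r L^2_\theta$ is dual to $\cL^{\tilde p'}_r L^2_\theta$ (the angular factor being self-dual), duality in $t,r,\theta$ converts this into the adjoint bound
\[
  \Bigl\| \int_{\R} e^{-\imu s \Delta} P_N g(s) \dd s \Bigr\|_{L^2_x} \lesssim N^{2 - 2/\tilde q - 4/\tilde p} \|P_N g\|_{L^{\tilde q'}_t \cL^{\tilde p'}_r L^2_\theta}.
\]

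Composing this adjoint bound with the standard Strichartz estimate from Lemma~\ref{lem:StrichartzEstimates}, applied to the $L^2$-function $h := \int_{\R} e^{-\imu s \Delta} P_N g(s) \dd s$, yields the \emph{untruncated} inhomogeneous estimate
\[
  \Bigl\| \int_{\R} e^{\imu (t-s) \Delta} P_N g(s) \dd s \Bigr\|_{L^q_t L^p_x} \lesssim N^{2 - 2/\tilde q - 4/\tilde p} \|P_N g\|_{L^{\tilde q'}_t \cL^{\tilde p'}_r L^2_\theta}
\]
for every Schr\"odinger admissible $(q,p)$. To replace the integration over $\R$ by the retarded Duhamel integral $\int_{t_0}^{t}$, I would then apply the Christ--Kiselev lemma; this is precisely where the strict inequality $q > \tilde q'$ enters, since the lemma requires the temporal exponents on the two sides to be strictly ordered.

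The only non-routine input is the frequency-localized homogeneous spherically averaged Strichartz estimate used in the first step, with the correct scaling factor $N^{s_c(\tilde q, \tilde p)}$ on the right-hand side; this is the main obstacle, and can be supplied from the Schr\"odinger analogues (Guo--Wang, Cho--Lee, and subsequent authors, building on Sterbenz's work in the wave setting) of the improved Strichartz inequalities that exploit angular regularity. Once this input is available, the duality argument and the application of Christ--Kiselev are completely standard and introduce no further conditions beyond $q > \tilde q'$.
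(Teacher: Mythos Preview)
Your proposal is correct and follows essentially the same route as the paper: the paper cites Guo's homogeneous spherically averaged Strichartz estimate (at unit frequency), dualizes, composes with the standard Strichartz estimate from Lemma~\ref{lem:StrichartzEstimates}, and then applies the Christ--Kiselev lemma, exactly as you describe. The only cosmetic difference is that the paper works at unit frequency $P_0$ first and rescales to frequency $N$ at the very end, whereas you carry the scaling factor $N^{2-2/\tilde q-4/\tilde p}$ throughout; these are equivalent.
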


\begin{proof}
	As $(\tilde{q},\tilde{p})$ satisfies~\eqref{eq:ExtendedRange}, Theorem~1.1 in~\cite{Guo2016} yields
	\begin{equation}
	\label{eq:StrichartzAvHom}
		\|e^{\imu t \Delta} P_0 f\|_{L^{\tilde{q}}_t \cL^{\tilde{p}}_r L^2_\theta} \lesssim \|P_0 f\|_{L^2}
	\end{equation}
	for every $f \in L^2(\R^4)$. Applying Lemma~\ref{lem:StrichartzEstimates} first and then the dual estimate of~\eqref{eq:StrichartzAvHom}, we infer
	\begin{align*}
		\Big\|\int_I e^{\imu (t-s) \Delta} P_0 g(s) \dd s \Big\|_{L^q_t L^p_x} \lesssim \Big\|\int_I e^{-\imu s \Delta} P_0 g(s) \dd s \Big\|_{L^2} \lesssim \|P_0 g\|_{L^{\tilde{q}'}_t \cL^{\tilde{p}'}_r L^2_\theta}.
	\end{align*}
	Employing the Christ-Kiselev lemma (see~\cite{CK2001,T2000}) and rescaling to frequency $N$, the assertion follows.
\end{proof}

We next turn to a square function estimate which will be crucial in the proof of our probabilistic estimates in Section~\ref{sec:ProbabilisticEst}.
\begin{lem}
	\label{lem:SquareFunction}
	Let $p,\mu \in [2,\infty)$. Then for every $\tilde{p} \in [2,p]$ we have
	\begin{align*}
		\Big\|\Big(\sum_{j \in \Z^4} |P_j f|^2 \Big)^{\frac{1}{2}} \Big\|_{\cL^p_r L^\mu_\theta} \lesssim \|f\|_{\cL^{\tilde{p}}_r L^\mu_\theta}.
	\end{align*}
\end{lem}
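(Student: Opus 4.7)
The plan is to combine Khintchine's inequality with the unit-scale Bernstein estimate and a Fefferman--Stein type maximal argument adapted to the mixed norm.

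\textbf{First}, I would apply Khintchine's inequality pointwise in $x$: for any $\tau \in [1,\infty)$,
\[ \Big(\sum_{j \in \Z^4} |P_j f(x)|^2\Big)^{1/2} \sim \Big(\mathbb{E}_{\omega'} \Big|\sum_{j \in \Z^4} \varepsilon_j(\omega') P_j f(x)\Big|^\tau\Big)^{1/\tau}, \]
with Rademacher signs $(\varepsilon_j)$. Choosing $\tau = \max(p,\mu)$ and applying Minkowski's integral inequality first in the angular variable (legitimate since $\tau/\mu \geq 1$) and then in the radial variable (since $\tau/p \geq 1$), the expectation moves outside the mixed norm and the estimate reduces to the uniform bound
\[ \Big\|\sum_j \varepsilon_j(\omega') P_j f\Big\|_{\cL^p_r L^\mu_\theta} \lesssim \|f\|_{\cL^{\tilde p}_r L^\mu_\theta}. \]

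\textbf{Next}, to derive this multiplier-type bound I would exploit that each $P_j f$ is frequency-localized in a ball of unit radius around $j$, so the pointwise form of Bernstein (Fefferman--Stein) yields, for $\tilde P_j$ a fattened analogue with $P_j = P_j \tilde P_j$,
\[ |P_j f(x)| \lesssim_{\tilde p} \bigl(M|\tilde P_j f|^{\tilde p}\bigr)^{1/\tilde p}(x), \]
where $M$ is the Hardy--Littlewood maximal operator. Combining this with the rapid decay of the convolution kernels of the $P_j$ and with the boundedness of $M$ on $\cL^p_r L^\mu_\theta$ for $p,\mu \in (1,\infty)$ produces the required $\tilde p$-to-$p$ gain in integrability. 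The final orthogonality step, reducing $(\sum_j |\tilde P_j f|^2)^{1/2}$ to $f$ in $\cL^{\tilde p}_r L^\mu_\theta$, uses the finite-overlap property of the fattened symbols $(\tilde \psi_j)$ and a second Khintchine--Rademacher step on the fattened family.

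\textbf{Main obstacle.} The principal difficulty is that the anisotropic mixed norm $\cL^p_r L^\mu_\theta$ is not translation-invariant, so classical Fourier multiplier theorems (H\"ormander--Mihlin) do not apply directly to $\sum_j \varepsilon_j(\omega') P_j$. The workaround is to stay at the level of pointwise Bernstein and maximal-function estimates, which are compatible with the radial/angular decomposition because $M$ acts boundedly on $\cL^p_r L^\mu_\theta$. The delicate point is to ensure that the Bernstein gain from $\tilde p \leq p$ survives both the $\ell^2$-summation over $j$ and the mixed-norm structure, without introducing logarithmic losses or spurious derivative factors; this is precisely what the Khintchine-to-Rademacher-average reduction (with the choice $\tau \geq \max(p,\mu)$ making Minkowski legitimate in both spatial slots) and the vector-valued Fefferman--Stein maximal inequality are designed to accomplish.
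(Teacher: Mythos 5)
There is a genuine gap, and it is located in your very first reduction. After Khintchine and Minkowski you need the \emph{uniform-in-signs} bound $\big\|\sum_{j\in\Z^4}\varepsilon_j P_j f\big\|_{\cL^p_r L^\mu_\theta}\lesssim \|f\|_{\cL^{\tilde p}_r L^\mu_\theta}$. This is strictly stronger than the square-function estimate you are proving, and it is false in a case the lemma covers, namely $\tilde p=p=\mu=:q>2$ (then the mixed norm is just $L^q(\R^4)$). Indeed, the symbols $\sum_j\varepsilon_j\psi_j$ are real, so the operators $T_\varepsilon=\sum_j\varepsilon_j P_j$ are self-adjoint and $\|T_\varepsilon\|_{L^q\to L^q}=\|T_\varepsilon\|_{L^{q'}\to L^{q'}}$; uniform boundedness on $L^{q'}$ with $q'<2$ would, by Khintchine and Fubini, imply $\big\|\big(\sum_j|P_jf|^2\big)^{1/2}\big\|_{L^{q'}}\lesssim\|f\|_{L^{q'}}$, which fails for unit-scale decompositions: testing on $f$ whose Fourier transform is a sum of $N^4$ unit bumps at lattice points (a product of Dirichlet-type kernels times a bump) gives a square function of size $\sim N^2$ in $L^{q'}$ while $\|f\|_{L^{q'}}\sim N^{4(1-1/q')}\ll N^2$ for $q'<2$. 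So no argument can establish your intermediate claim; the moment you trade the square function for a randomized sum you discard exactly the quadratic structure that makes the statement true.

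Independently of this, your second step does not produce the $\tilde p$-to-$p$ gain and is partly circular: the pointwise bound $|P_jf|\lesssim (M|\tilde P_j f|^{\tilde p})^{1/\tilde p}$ and the boundedness of the Hardy--Littlewood operator $M$ on $\cL^p_rL^\mu_\theta$ only yield same-exponent estimates ($M$ does not map $\cL^{\tilde p}_r$ into $\cL^{p}_r$ for $p>\tilde p$), and your ``final orthogonality step'', reducing $\big(\sum_j|\tilde P_jf|^2\big)^{1/2}$ to $f$ in $\cL^{\tilde p}_rL^\mu_\theta$, is again an instance of the lemma, which you propose to prove by yet another sign-randomization -- the same false uniform bound as above. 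The mechanism that actually delivers the gain (and is the paper's proof) keeps the square intact: one uses the pointwise domination $\big(\sum_j|P_jf|^2\big)^{1/2}\lesssim\big(|\check\psi|\ast|f|^2\big)^{1/2}$ from Lemma~2.8 in~\cite{KMV2019} and then Young's convolution inequality in the mixed radial-angular spaces (Lemma~3.2 in~\cite{GLNW2014}) with the kernel placed in $\cL^{\hat p}_rL^\infty_\theta$, $\tfrac1{\hat p}=1-2\big(\tfrac1{\tilde p}-\tfrac1p\big)$; the rapid decay $|\check\psi(r\theta)|\lesssim (1+r^2)^{-3}$ is what upgrades $\cL^{\tilde p/2}_rL^{\mu/2}_\theta$ to $\cL^{p/2}_rL^{\mu/2}_\theta$. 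If you want a probabilistic proof, the expectation must stay inside (work with the square function or $L^2_{\omega'}$ at fixed $x$), not be converted into a uniform statement about $\sum_j\varepsilon_jP_j$.
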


\begin{proof}
We first assume that $f$ is a Schwartz function. In the proof of Lemma~2.8 in~\cite{KMV2019} it was shown that
	\begin{equation}
		\label{eq:SquareFunctionEstimate}
		\Big(\sum_{j \in \Z^4} |P_j f|^2 \Big)^{\frac{1}{2}} \lesssim (|\check{\psi}| \ast |f|^2)^{\frac{1}{2}},
	\end{equation}
	where $\check{\psi}$ is the inverse Fourier transform of the function $\psi$ from Subsection~\ref{subsec:UnitScaleDecompositionFreq}. We define $\hat{p} \in [1,\infty)$ by
	\begin{align*}
		\frac{1}{\hat{p}} = 1 - 2\Big(\frac{1}{\tilde{p}} - \frac{1}{p} \Big).
	\end{align*}
	Applying Young's convolution estimate for spherically averaged spaces, see Lemma~3.2 in~\cite{GLNW2014}, we obtain
	\begin{align*}
		\Big\|\Big(\sum_{j \in \Z^4} |P_j f|^2 \Big)^{\frac{1}{2}} \Big\|_{\cL^p_r L^\mu_\theta} \lesssim \||\check{\psi}| \ast |f|^2\|_{\cL^{\frac{p}{2}}_r L^{\frac{\mu}{2}}_\theta}^{\frac{1}{2}} \lesssim \| \check{\psi} \|_{\cL^{\hat{p}}_r L^\infty_\theta}^{\frac{1}{2}} \| |f|^2 \|_{\cL^{\frac{\tilde{p}}{2}}_r L^{\frac{\mu}{2}}_\theta}^{\frac{1}{2}} \lesssim \|f\|_{\cL^{\tilde{p}}_r L^\mu_\theta},
	\end{align*}
	where we used that
	\begin{align*}
		\sup_{\theta \in S^3} |\check{\psi}(r \theta)| = \sup_{|x| = r} |\check{\psi}(x)| \leq (1 + r^2)^{-3} \sup_{|x| = r} (1 + |x|^2)^3 |\check{\psi}(x)| \lesssim (1 + r^2)^{-3}
	\end{align*}
	for all $r > 0$ since $\check{\psi}$ is a Schwartz function.
	
	The assertion of the lemma then follows by approximation.
\end{proof}

\subsection{Framework for the forced cubic nonlinear Schr{\"o}dinger equation}
\label{subsec:FunctionalFramework}
In this subsection we introduce the functional setup in which we develop the local wellposedness and the perturbation theory for the forced cubic NLS~\eqref{eq:ForcednlSchrdfandf}. The involved norms depend on a fixed small parameter $0 < \delta \ll 1$. In the proof of Theorem~\ref{thm:locwp} it will be chosen in such a way that $\frac{1}{7} + 6 \delta \leq s$, where $\frac{1}{7} < s < 1$ is the regularity parameter from that theorem. We further set $\nu = \frac{16}{7} \delta$.

The norms are defined dyadically. Let $I \subseteq \R$ be an interval. For $v_N$ of spatial frequency $N$, i.e. having spatial Fourier support in $A_N := I \times \{\xi \in \R^4 \colon N/2 < |\xi| < 2N\}$, we set
\begin{equation}
	\label{eq:DefXN}
	\|v_N\|_{X_N(I)} = N\|v_N\|_{L^{\frac{1}{\delta}}_t L^{\frac{2}{1-\delta}}_x} + N \|v_N\|_{L^2_t L^4_x}
\end{equation}
for all $N \in 2^{\Z}$. We then define
\begin{equation}
	\label{eq:DefX}
	\|v\|_{X(I)} = \Big(\sum_{N \in 2^{\Z}} \|P_N v\|_{X_N(I)}^2 \Big)^{\frac{1}{2}}.
\end{equation}
For the right-hand sides we introduce the norms
\begin{align*}
	\|h_N\|_{G_N(I)} = \inf\big(N\|h_N^{(1)}\|_{L^1_t L^2_x} + N^{\frac{4}{7}+\nu} \|h_N^{(2)}\|_{L^{\frac{2}{1+2\delta}}_t \cL^{\frac{14}{9+\delta}}_r L^2_\theta}\big)
\end{align*}
for all $N \in 2^{\Z}$, where $\supp \hat{h}_N \subseteq A_N$ and the infimum is taken over all $h_N^{(1)}$ and $h_N^{(2)}$ with $h_N = h_N^{(1)} + h_N^{(2)}$ and $\supp \hat{h}_N^{(1)}, \supp\hat{h}_N^{(2)} \subseteq A_N$. We set
\begin{align*}
	\|h\|_{G(I)} = \Big( \sum_{N \in 2^{\Z}} \|P_N h\|_{G_N(I)}^2 \Big)^{\frac{1}{2}}.
\end{align*}
Note that we obtain
\begin{equation}
	\label{eq:StrichartzinXLin}
	\| e^{\imu t \Delta} v_0\|_{L^\infty_I \dot{H}^1} + \|e^{\imu t \Delta} v_0 \|_{X(I)} \lesssim \|v_0\|_{\dot{H}^1}
\end{equation}
from Lemma~\ref{lem:StrichartzEstimates} as $(\frac{1}{\delta}, \frac{2}{1-\delta})$ and $(2,4)$ are Schr{\"o}dinger admissible.
Moreover, since $(\frac{2}{1-2\delta}, \frac{14}{5-\delta})$ satisfies~\eqref{eq:ExtendedRange} and
\begin{align*}
	2 - \frac{2}{\frac{2}{1-2\delta}} - \frac{4}{\frac{14}{5-\delta}} = -\frac{3}{7} + \frac{16}{7} \delta = -\frac{3}{7} + \nu,
\end{align*}
Lemma~\ref{lem:StrichartzEstimates} and Lemma~\ref{lem:StrichartzEstimatesAv} imply
\begin{equation}
	\label{eq:StrichartzinXNonlin}
	\Big\|\int_{t_0}^t e^{\imu (t-s)\Delta} h(s) \dd s \Big\|_{L^\infty_I \dot{H}^1} + \Big\|\int_{t_0}^t e^{\imu (t-s)\Delta} h(s) \dd s \Big\|_{X(I)} \lesssim \|h\|_{G(I)}.
\end{equation}

Finally, we use the norms
\begin{align}
	\|F_N\|_{Y_N(I)} &= N^{\frac{4}{7}+\nu} \langle N \rangle^{ 4 \delta}  \| F_N\|_{L^2_t \cL^{\frac{14}{5-\delta}}_r L^{\frac{4}{\delta}}_\theta} + N^{\frac{4}{7}+\nu} \langle N \rangle^{4\delta} \|F_N\|_{L^2_t \cL^{28}_r L^{\frac{4}{\delta}}_\theta} \nonumber\\
	&\qquad +\langle N \rangle^{\frac{1}{7}} \|F_N\|_{L^{\frac{1}{\delta}}_t L^{\frac{14}{5-\delta}}_x} + \langle N \rangle^{\frac{1}{7}} \|F_N\|_{L^{\frac{1}{\delta}}_t L^{28}_x} \label{eq:DefYNnorm}
\end{align}
%\begin{align*}
%	\|F_N\|_{Y_N(I)} &= N^{\frac{4}{7}+\nu}\langle N \rangle^{8\delta} \|F_N\|_{L^2_t \cL^{\frac{14}{5-\delta}}_r L^{\frac{2}{3\delta}}_\theta} + N^{\frac{4}{7}+\nu}\langle N \rangle^{8\delta} \|F_N\|_{L^2_t \cL^{28}_r L^{\frac{2}{3\delta}}_\theta} \\
%	&\qquad +\langle N \rangle^{\frac{1}{7}+4\delta} \|F_N\|_{L^{\frac{1}{\delta}}_t L^{\frac{7}{2}}_x} + \langle N \rangle^{\frac{1}{7}+4\delta} \|F_N\|_{L^{\frac{1}{\delta}}_t L^{28}_x}
%\end{align*}
for $F_N$ with $\supp \hat{F}_N \in A_N$ and
\begin{align*}
	\|F\|_{Y(I)} = \Big( \sum_{N \in 2^{\Z}} \|P_N F\|_{Y_N(I)}^2 \Big)^{\frac{1}{2}}
\end{align*}
for the forcing term.
%%%%%%%%%%%%%%%%%%%%%%%%%%%%%%%%%%%%%%%%%%%%%%%%%%%%%%%%%%%%%%%%%%%%%%%%%%%%%%%%%%%%%%%%%%%%%%%%%%%%%%%%%%
%%%%%%%%%%%%%%%%%%%%%%%%%%%%%%%%%%%%%%%%%%%%%%%%%%%%%%%%%%%%%%%%%%%%%%%%%%%%%%%%%%%%%%%%%%%%%%%%%%%%%%%%%%
%%%%%%%%%%%%%%%%%%%%%%%%%%%%%%%%%%%%%%%%%%%%%%%%%%%%%%%%%%%%%%%%%%%%%%%%%%%%%%%%%%%%%%%%%%%%%%%%%%%%%%%%%%
\section{Probabilistic estimates}
\label{sec:ProbabilisticEst}

In this section we derive the improved space-time estimates for the linear Schr{\"o}dinger flow with data randomized with respect to the decompositions in frequency space and the angular variable or in frequency space, the angular variable, and physical space.

A crucial tool is the following large deviation estimate, which we formulate as in Lemma~3.1 in~\cite{BT2008I}.

\begin{lem}
	\label{lem:LargeDeviation}
	Let $(X_k)_{k \in \N}$ be a sequence of independent, real-valued, mean-zero random variables on a probability space $(\Omega, \cA, \PP)$ with distributions $(\mu_k)_k$ such that there is a constant $c > 0$ with
	\begin{align*}
		\int_{\R} e^{\gamma x} \dd \mu_k(x) \leq e^{c \gamma^2}
	\end{align*}
	for all $\gamma \in \R$ and $k \in \N$. Then there is a constant $C > 0$ such that
	\begin{align*}
		\Big\|\sum_{k \in \N} c_k X_k \Big\|_{L^\beta(\Omega)} \leq C \sqrt{\beta} \Big(\sum_{k \in \N} |c_k|^2 \Big)^{\frac{1}{2}}
	\end{align*}
	for all $(c_k)_{k \in \N} \in l^2(\N)$ and $\beta \in [2,\infty)$.
\end{lem}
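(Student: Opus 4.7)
The plan is to carry out the classical Chernoff/moment generating function argument, deducing a sub-Gaussian tail bound and then converting it to an $L^\beta$ estimate via the layer-cake formula. I would first reduce to finite sums: let $S_N = \sum_{k=1}^N c_k X_k$ and note that by Fatou's lemma (or monotone convergence applied after squaring in the $L^2$ case and bootstrapping) it suffices to obtain the bound with a constant independent of $N$, since the full series converges in $L^2(\Omega)$ by orthogonality.

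For a fixed $N$ and $\gamma \in \R$, independence and the hypothesis yield
\begin{align*}
\int_\Omega \eul^{\gamma S_N} \dd \PP = \prod_{k=1}^N \int_\R \eul^{\gamma c_k x} \dd \mu_k(x) \leq \prod_{k=1}^N \eul^{c \gamma^2 c_k^2} = \eul^{c \gamma^2 \sigma^2},
\end{align*}
where $\sigma^2 = \sum_{k=1}^N |c_k|^2$. Applying the same estimate to $-S_N$, Markov's inequality then gives, for any $\lambda>0$ and $\gamma > 0$,
\begin{align*}
\PP(|S_N| > \lambda) \leq 2 \eul^{-\gamma \lambda + c \gamma^2 \sigma^2}.
\end{align*}
Optimizing in $\gamma = \lambda/(2 c \sigma^2)$ produces the sub-Gaussian tail
\begin{align*}
\PP(|S_N| > \lambda) \leq 2 \eul^{-\lambda^2/(4 c \sigma^2)}.
\end{align*}

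From here I would use the layer-cake representation
\begin{align*}
\|S_N\|_{L^\beta(\Omega)}^\beta = \beta \int_0^\infty \lambda^{\beta-1} \PP(|S_N| > \lambda) \dd \lambda \leq 2 \beta \int_0^\infty \lambda^{\beta-1} \eul^{-\lambda^2/(4c \sigma^2)} \dd \lambda,
\end{align*}
and substitute $u = \lambda^2/(4c\sigma^2)$ to recognize a Gamma integral:
\begin{align*}
\|S_N\|_{L^\beta(\Omega)}^\beta \leq \beta (4 c \sigma^2)^{\beta/2} \, \Gamma(\beta/2).
\end{align*}
Taking $\beta$-th roots and applying the Stirling bound $\Gamma(\beta/2)^{1/\beta} \lesssim \sqrt{\beta}$ for $\beta \geq 2$ yields $\|S_N\|_{L^\beta(\Omega)} \leq C \sqrt{\beta}\, \sigma$ with a constant $C$ depending only on $c$, and letting $N \to \infty$ completes the argument.

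The only delicate point is the passage to the infinite sum: this requires knowing a priori that $\sum_k c_k X_k$ converges in some sense. This follows from the $L^2$ orthogonality (the hypothesis implies $\mathrm{Var}(X_k) \leq 2c$, uniformly in $k$), so the partial sums form a Cauchy sequence in $L^2(\Omega)$ and converge to a limit $S$; then $S_N \to S$ almost surely along a subsequence, and Fatou applied to $|S_N|^\beta$ transfers the uniform bound to $S$. No other step presents any real obstacle — the computation is entirely standard Khintchine-type.
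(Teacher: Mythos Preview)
Your argument is correct and is the standard Khintchine-type proof via Chernoff bounds and the layer-cake formula. Note, however, that the paper does not actually prove this lemma: it is stated without proof and attributed to Lemma~3.1 in~\cite{BT2008I}, where precisely the argument you give appears. So there is nothing to compare---you have supplied the proof the paper outsources.
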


In the analysis of the randomization~\eqref{eq:RandomizationFreqAngleSpace}, we will also need Lemma~3.3 from~\cite{S2020} since the Littlewood-Paley operators do not commute with the physical-space randomization. In~\cite{S2020}, it was formulated in three dimensions, but neither the statement nor the proof depend on the dimension. Similar estimates have previously appeared in~\cite{Br2019, DLM2019}.
\begin{lem}
	\label{lem:Mismatch}
	Let $1 \leq p < \infty$, $M,N \in 2^{\N}$ with $|\log_2 \frac{M}{N}| \geq 5$, $l,l' \in \Z^4$, $D > 0$, and $(\ph_l)_{l \in \Z^4}$ the partition of unity from~\eqref{eq:DefPhysicalSpacePartition}. Then
	\begin{align}
		&\|\ph_l P_M (\ph_{l'} f)\|_{L^p} + \|\ph_l P_{\leq 2^0} (\ph_{l'}f)\|_{L^p} \lesssim_D \langle l - l' \rangle^{-D} \|f\|_{L^p}, \label{eq:MismatchSpace} \\
		&\|P_M(\ph_l P_N f)\|_{L^p}  \lesssim_D M^{-D} N^{-D} \|f\|_{L^p}, \label{eq:MismatchFrequencyHH} \\
		&\|P_M(\ph_l P_{\leq 2^{-5}M} f)\|_{L^p} \lesssim_D M^{-D} \|f\|_{L^p}, \label{eq:MismatchFrequencyHL}
	\end{align}
	for all $f \in L^p(\R^4)$, where the implicit constants are independent of $M,N,l$ and $l'$.
\end{lem}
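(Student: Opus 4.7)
The plan is to reduce all three estimates to explicit kernel/symbol calculus, converting the spatial separation $\langle l-l'\rangle$ or the frequency gap $|\log_2(M/N)|\ge 5$ into polynomial decay. The structural input for~\eqref{eq:MismatchFrequencyHH} and~\eqref{eq:MismatchFrequencyHL} is that the Fourier supports of $\chi_M$ and $\chi_N$ (resp.~$\chi_{\le 2^{-5}M}$) are disjoint under the assumed gap, so that $P_MP_N=0$ and $P_MP_{\le 2^{-5}M}=0$. I will also repeatedly use that $\ph_l\in C_c^\infty(\R^4)$ has all seminorms uniform in $l\in\Z^4$, and that for $M\in 2^\N$ the convolution kernel $K_M$ obeys $|K_M(x-y)|\lesssim_L M^4(1+M|x-y|)^{-L}$; combined with $M\ge 1$ this yields the $M$-uniform bound $|K_M(x-y)|\lesssim_D \langle x-y\rangle^{-D-5}$ on the region $|x-y|\gtrsim 1$.

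For~\eqref{eq:MismatchSpace} my plan is to work directly with
\[
\ph_l(x)P_M(\ph_{l'}f)(x)=\ph_l(x)\int K_M(x-y)\ph_{l'}(y)f(y)\dd y.
\]
On the product of supports, $|x-y|\sim\langle l-l'\rangle$ once this separation is large, so the $M$-uniform kernel bound yields $|K_M(x-y)|\lesssim_D \langle l-l'\rangle^{-D-5}$. Hölder in $y$ against the compactly supported $\ph_{l'}$ followed by integration over $\supp\ph_l$ then gives the claimed $\langle l-l'\rangle^{-D}$ decay in $L^p$; small $|l-l'|$ is trivial from the uniform $L^p$-boundedness of $P_M$, and the $P_{\le 2^0}$ case is handled identically with the fixed Schwartz kernel $\check{\eta}_0$ in place of $K_M$.

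For~\eqref{eq:MismatchFrequencyHH} I use $P_MP_N=0$ to rewrite
\[
P_M(\ph_lP_Nf)=[P_M,\ph_l]\,P_Nf=P_M[\ph_l,P_N]f,
\]
and choose the representation adapted to the dominant scale. In the case $M\ge 32N$ I Taylor expand $\ph_l(y)-\ph_l(x)$ around $x$ to order $K-1$ inside $[P_M,\ph_l]$; each polynomial piece becomes a convolution with $z^kK_M$, whose Fourier transform $(-\imu)^{|k|}\partial^k\chi_M$ is supported in $\{|\xi|\sim M\}$ and therefore annihilates $P_Nf$. The Lagrange remainder $|R(x,y)|\lesssim_K |y-x|^K$ together with $\||z|^KK_M\|_{L^1}\lesssim M^{-K}$ (by rescaling) and Young's inequality then produce $\|P_M(\ph_lP_Nf)\|_{L^p}\lesssim_K M^{-K}\|f\|_{L^p}$. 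Taking $K=2D$ and using $M\ge 32N$ converts this to $M^{-2D}\le 32^{-D}M^{-D}N^{-D}\lesssim_D M^{-D}N^{-D}$. The symmetric case $N\ge 32M$ is the main obstacle, since the direct $[P_M,\ph_l]$ expansion only yields $M$-decay; it is resolved by switching to the second representation $P_M[\ph_l,P_N]f$ and Taylor expanding $\ph_l(x)-\ph_l(y)$ around $y$, so that the polynomial pieces become $(z^kK_N)\ast(\partial^k\ph_l\cdot f)$ with Fourier support in $\{|\xi|\sim N\}$, killed by the outer $P_M$, while the remainder produces $N^{-K}$ decay that converts analogously. Estimate~\eqref{eq:MismatchFrequencyHL} follows from the first strategy applied with $P_{\le 2^{-5}M}$ in place of $P_N$: its symbol is supported in $\{|\xi|\le M/20\}$, disjoint from $\{|\xi|\sim M\}$, so the Taylor-plus-frequency-disjointness argument goes through unchanged and yields the bound with $M^{-D}$. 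The key technical step throughout is the verification that each polynomial Taylor term is annihilated by the outer Littlewood--Paley projector because the $z^k$-weighted convolution kernels inherit the Fourier support of the original multiplier; once this is in place, the remainder bounds are routine applications of Young's inequality and rescaling.
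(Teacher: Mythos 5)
Your argument is correct. A point of reference first: the paper does not prove this lemma at all — it imports it verbatim from Lemma~3.3 of~\cite{S2020} (with the remark that the proof is dimension-independent), so the comparison is with the proof given there and in the related works~\cite{Br2019, DLM2019}. For the spatial-mismatch bound~\eqref{eq:MismatchSpace} you do essentially the same thing as the cited proof: uniform-in-$M$ polynomial decay of the kernel $K_M$ at distances $\gtrsim 1$, plus the trivial bound for $|l-l'|\lesssim 1$. For the frequency-mismatch bounds~\eqref{eq:MismatchFrequencyHH}--\eqref{eq:MismatchFrequencyHL}, the standard route is Fourier-side: one writes the kernel of $P_M\,\ph_l\,P_N$ as $\int\!\!\int e^{\imu(x\cdot\xi-y\cdot\eta)}\chi_M(\xi)\widehat{\ph_l}(\xi-\eta)\chi_N(\eta)\dd\eta\dd\xi$, observes $|\xi-\eta|\gtrsim\max(M,N)$ on the supports, uses the rapid decay of $\widehat{\ph_l}$ (uniform in $l$, since $\ph_l$ is a translate of $\ph_0$), and closes with a Schur-type kernel bound; this gives $\max(M,N)^{-L}$ at once, for both orderings of $M,N$ and for~\eqref{eq:MismatchFrequencyHL}, without any case distinction. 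Your route — using $P_MP_N=0$ to write $P_M(\ph_lP_Nf)$ as $[P_M,\ph_l]P_Nf$ or $P_M[\ph_l,P_N]f$, Taylor-expanding $\ph_l$ so the polynomial terms are killed by Fourier-support disjointness and the remainder yields moment bounds $\||z|^KK_M\|_{L^1}\lesssim M^{-K}$ — is genuinely different but equally valid; it stays entirely on the physical side (Young's inequality only, no Schur test or oscillatory-integral bookkeeping) at the cost of a case distinction $M\ge 32N$ versus $N\ge 32M$ and of choosing the commutator adapted to the larger frequency, which you do correctly. Two cosmetic points: take $K=\lceil 2D\rceil$ (respectively $\lceil D\rceil$) since $D$ need not be an integer, and in~\eqref{eq:MismatchSpace} the kernel of $P_{\le 2^0}$ is the inverse Fourier transform of the radial cutoff $\chi_{\le 1}$ built from $\eta_0$, not $\check{\eta}_0$ itself — it is still a fixed Schwartz function, so the argument is unaffected.
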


%%%%%%%%%%%%%%%%%%%%%%%%%%%%%%%%%%%%%%%%%%%%%%%%%%%%%%%%%%%%%%%%%%%%%%%%%%%%%%%%%%%%%%%%%%%%%%%%%%%%%%%%%
%%%%%%%%%%%%%%%%%%%%%%%%%%%%%%%%%%%%%%%%%%%%%%%%%%%%%%%%%%%%%%%%%%%%%%%%%%%%%%%%%%%%%%%%%%%%%%%%%%%%%%%%%
%%%%%%%%%%%%%%%%%%%%%%%%%%%%%%%%%%%%%%%%%%%%%%%%%%%%%%%%%%%%%%%%%%%%%%%%%%%%%%%%%%%%%%%%%%%%%%%%%%%%%%%%%
The estimates from the above lemma imply the following corollary, which is suitable for our applications. It is analogous to Corollary~3.4 in~\cite{S2020} where the case $s = 1$ was treated but we provide the details of the proof as we consider general regularities $s$ here.
\begin{cor}
	\label{cor:CompLPPhysicalSpace}
	Let $p \in [2,\infty)$, $s \geq 0$, and $(\ph_l)_{l \in \Z^4}$ be the partition of unity introduced in~\eqref{eq:DefPhysicalSpacePartition}. We then have
	\begin{align*}
		\| \langle M \rangle^s P_M(\ph_l f)\|_{l^2_M l^2_l L^{p'}_x} \lesssim \|f\|_{H^s}
	\end{align*}
	for all $f \in H^s(\R^4)$.
\end{cor}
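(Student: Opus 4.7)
The plan is to combine a Littlewood--Paley decomposition with the mismatch estimates of Lemma~\ref{lem:Mismatch}. Expanding $f = \sum_N P_N f$ dyadically gives
\[
	P_M(\varphi_l f) = \sum_N P_M(\varphi_l P_N f),
\]
and I will split the $N$-sum into the diagonal part $|\log_2(M/N)| < 5$ and the off-diagonal part $|\log_2(M/N)| \geq 5$.

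In the diagonal regime only $O(1)$ values of $N$ contribute to a given $M$. Since $\varphi_l P_N f$ is supported in a ball of bounded size around $l$ and $p' \leq 2$, H\"older's inequality yields $\|\varphi_l P_N f\|_{L^{p'}_x} \lesssim \|\varphi_l P_N f\|_{L^2_x}$. Combined with the $L^{p'}$-boundedness of $P_M$ and the bounded overlap of $\{\varphi_l\}$, this gives $\bigl(\sum_l \|P_M(\varphi_l P_N f)\|_{L^{p'}_x}^2\bigr)^{1/2} \lesssim \|P_N f\|_{L^2_x}$. Summing over $N \sim M$ and weighting by $\langle M\rangle^{2s}$, the diagonal contribution is bounded by $\bigl(\sum_M \langle M\rangle^{2s} \|\widetilde P_M f\|_{L^2}^2\bigr)^{1/2} \lesssim \|f\|_{H^s}$ via the Littlewood--Paley characterization of $H^s$, where $\widetilde P_M$ is a suitable fattened projector.

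For the off-diagonal regime I will insert the partition $1 = \sum_{l'} \varphi_{l'}$ to write $P_M(\varphi_l P_N f) = \sum_{l'} P_M \varphi_l P_N \varphi_{l'} f$. Estimate~\eqref{eq:MismatchFrequencyHH} (or~\eqref{eq:MismatchFrequencyHL} when $N \leq 2^{-5}M$) yields the $L^{p'} \to L^{p'}$ operator-norm bound $\lesssim_D (MN)^{-D}$ (respectively $\lesssim_D M^{-D}$), while applying~\eqref{eq:MismatchSpace} to $\varphi_l P_N \varphi_{l'}$ and then composing with the $L^{p'}$-bounded $P_M$ yields a bound of $\lesssim_D \langle l-l'\rangle^{-D}$. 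Interpolating between these two operator-norm bounds and using $\|\varphi_{l'} f\|_{L^{p'}_x} \lesssim \|\varphi_{l'} f\|_{L^2_x}$ on the compact support of $\varphi_{l'}$, I obtain
\[
	\|P_M \varphi_l P_N \varphi_{l'} f\|_{L^{p'}_x} \lesssim_D (MN)^{-D} \langle l-l'\rangle^{-D} \|\varphi_{l'} f\|_{L^2_x}.
\]
Young's convolution inequality in $\ell^2_l$ (using $\langle \cdot\rangle^{-D} \in \ell^1(\Z^4)$ for $D > 4$), together with the bounded-overlap bound $\sum_{l'} \|\varphi_{l'} f\|_{L^2}^2 \lesssim \|f\|_{L^2}^2$ and summation over the off-diagonal $N$'s, then produces $\|P_M(\varphi_l(1-\widetilde P_M)f)\|_{\ell^2_l L^{p'}_x} \lesssim_D M^{-D}\|f\|_{L^2}$; the final $\langle M\rangle^{2s}$-weighted $\ell^2_M$-summation converges for $D$ larger than $s+4$ and is bounded by $\|f\|_{L^2} \leq \|f\|_{H^s}$. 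The main obstacle is merging the frequency-mismatch bound and the physical-mismatch bound of Lemma~\ref{lem:Mismatch} into the single estimate with the combined decay $(MN)^{-D}\langle l-l'\rangle^{-D}$ that is required for the off-diagonal summability.
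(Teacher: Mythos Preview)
Your approach is essentially the paper's: split into diagonal and off-diagonal frequencies, use H\"older on the unit-scale support for the diagonal part, and on the off-diagonal part insert a second physical-space partition and combine the two mismatch bounds of Lemma~\ref{lem:Mismatch} multiplicatively (the paper does this via the inequality $\|P_M(\varphi_l P_N(\varphi_{l'}f))\|_{L^{p'}} \lesssim \|P_M(\varphi_l P_N(\varphi_{l'}f))\|_{L^{p'}}^{1/2}\|\varphi_l P_N(\varphi_{l'}f)\|_{L^{p'}}^{1/2}$, which is your ``interpolation'').

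There is, however, a genuine gap at small frequencies $M\leq 2^4$. First, Lemma~\ref{lem:Mismatch} is stated only for $M,N\in 2^{\N}$, so your invocation of \eqref{eq:MismatchFrequencyHH} and \eqref{eq:MismatchFrequencyHL} is not justified when $M<1$. Second, and more seriously, even granting an $M^{-D}$ off-diagonal bound, your final step fails: $\sum_{M\in 2^{\Z}}\langle M\rangle^{2s}M^{-2D}$ diverges as $M\to 0$ for any $D>0$, so the $\ell^2_M$-summation you claim does \emph{not} converge. The paper avoids this by treating $M\leq 2^4$ separately with Bernstein's inequality, $\|P_M(\varphi_l f)\|_{L^{p'}_x}\lesssim M^{4/p}\|\varphi_l f\|_{L^1_x}\lesssim M^{4/p}\|\varphi_l f\|_{L^2_x}$, which after the $\ell^2_l$-sum yields $M^{4/p}\|f\|_{L^2}$ and hence a convergent $\sum_{M\leq 2^4}\langle M\rangle^{2s}M^{8/p}$; for $M\geq 2^5$ the paper groups all low frequencies into a single $P_{\leq 2^0}f$ term (handled by \eqref{eq:MismatchFrequencyHL}) and sums only over $N\in 2^{\N}$ with $|\log_2(M/N)|\geq 5$. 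You should add this low-frequency treatment to your argument.
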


\begin{proof}
	We first consider $M \in 2^{\Z}$ with $M \leq 2^4$. Using Bernstein's and H{\"o}lder's inequalities combined with $|\supp \ph_l| \lesssim 1$ for all $l \in \Z^4$, we infer
	\begin{align}
		\label{eq:EstFreqPhysSpLow}
		\| \langle M \rangle^s P_M (\ph_l f)\|_{l^2_l L^{p'}_x} &\lesssim \langle M \rangle^s M^{4(1-\frac{1}{p'})} \|\ph_l f\|_{l^2_l L^1_x} \lesssim \langle M \rangle^s M^{\frac{4}{p}} \| \ph_l f\|_{l^2_l L^2_x} \nonumber\\
		&\lesssim \langle M \rangle^s M^{\frac{4}{p}} \|f\|_{L^2}.
	\end{align}
	Next fix $M \in 2^{\Z}$ with $M \geq 2^5$. We observe that
	\begin{align}
		\|P_M(\ph_l f)\|_{l^2_l L^{p'}_x} &\leq \|P_M (\ph_l \tilde{P}_M f)\|_{l^2_l L^{p'}_x} + \|P_M(\ph_l P_{\leq 0} f)\|_{l^2_l L^{p'}_x} \nonumber\\
		&\hspace{3em} + \sum_{\substack{ N \in 2^\N \\ |\log_2 (M/N)| \geq 5}} \|P_M (\ph_l P_N f)\|_{l^2_l L^{p'}_x}. \label{eq:EstFreqPhysSpHigh}
	\end{align}
	For the first term on the right-hand side we estimate
	\begin{align*}
		\|P_M (\ph_l \tilde{P}_M f)\|_{l^2_l L^{p'}_x} \lesssim \|\ph_l \tilde{P}_M f\|_{l^2_l L^2_x} \lesssim \|\tilde{P}_M f\|_{L^2_x}, 
	\end{align*}
	where we again used H{\"o}lder's inequality combined with $|\supp \ph_l| \lesssim 1$ for all $l \in \Z^4$. To treat the sum on the right-hand side of~\eqref{eq:EstFreqPhysSpHigh}, we first set $\tilde{\ph}_l = \sum_{m \in \Z^4, |m-l| \leq 4} \ph_m$ so that $\tilde{\ph}_l = 1$ on the support of $\ph_l$ for every $l \in \Z^4$. Applying Lemma~\ref{lem:Mismatch}, we obtain
	\begin{align*}
		\|P_M (\ph_l P_N f)\|_{L^{p'}_x} &\leq \sum_{l' \in \Z^4} \|P_M (\ph_l P_N (\ph_{l'}f))\|_{L^{p'}_x} \\
		&\lesssim  \sum_{l' \in \Z^4} \|P_M (\ph_l P_N (\ph_{l'}f))\|_{L^{p'}_x}^{\frac{1}{2}} \|\ph_l P_N (\ph_{l'}f)\|_{L^{p'}_x}^{\frac{1}{2}} \\
		&\lesssim \sum_{l' \in \Z^4} M^{-D} N^{-D} \langle l - l' \rangle^{-20} \| \tilde{\ph}_{l'} f\|_{L^{p'}_x} 
	\end{align*}
	for all $N \in 2^\N$ with $|\log_2(M/N)| \geq 5$ and $l \in \Z^4$ with a constant $D > s$. Exploiting that $\langle x - l\rangle \lesssim \langle l - l' \rangle$ for all $x \in \supp \tilde{\ph}_{l'}$ as well as $|\supp \tilde{\ph}_{l'}| \lesssim 1$ for all $l' \in \Z^4$, we get
	\begin{align*}
		&\|P_M (\ph_l P_N f)\|_{L^{p'}_x} \lesssim \sum_{l' \in \Z^4} M^{-D} N^{-D} \langle l - l' \rangle^{-20} \| \tilde{\ph}_{l'} f\|_{L^{2}_x} \\
		&\lesssim \sum_{l' \in \Z^4} M^{-D} N^{-D} \langle l - l' \rangle^{-10} \| \langle x - l\rangle^{-10} f\|_{L^{2}_x} \lesssim M^{-D} N^{-D}  \| \langle x - l\rangle^{-10} f\|_{L^{2}_x}.
	\end{align*}
	Taking the $l^2$-norm in $l$ and summing over $N \in 2^{\N}$ with $\log_2(M/N) \geq 5$, we arrive at
	\begin{align*}
		 \sum_{\substack{ N \in 2^\N \\ |\log_2 (M/N)| \geq 5}} \|P_M (\ph_l P_N f)\|_{l^2_l L^{p'}_x} 
		  \lesssim M^{-D} \|f\|_{L^2_x}.
	\end{align*}
	Arguing in the same way for the second term on the right-hand side of~\eqref{eq:EstFreqPhysSpHigh}, we get
	\begin{align*}
		\|P_M (\ph_l P_{\leq 0} f)\|_{l^2_l L^{p'}_x} \lesssim M^{-D} \|f\|_{L^2_x}.
	\end{align*}
	Inserting these estimates in~\eqref{eq:EstFreqPhysSpHigh} and multiplying with $\langle M \rangle^s$, we conclude
	\begin{equation}
		\label{eq:EstFreqPhysSpHigh2}
		\|\langle M \rangle^s P_M(\ph_l f)\|_{l^2_l L^{p'}_x} \lesssim \langle M \rangle^s \|\tilde{P}_M f\|_{L^2_x} + \langle M \rangle^s M^{-D} \|f\|_{L^2_x}.
	\end{equation}
	Finally, we take the $l^2$-norm in~\eqref{eq:EstFreqPhysSpHigh2} over $M \in 2^{\Z}$ with $M \geq 2^5$ and the $l^2$-norm in~\eqref{eq:EstFreqPhysSpLow} over $M \in 2^{\Z}$ with $M \leq 2^4$, which yields the assertion of the corollary as $D > s$.
\end{proof}

We now turn to the improvements in space-time integrability of the linear Schr{\"o}dinger flow with randomized data. We first consider the effect of the randomization with respect to the decomposition in frequency space and the angular variable. It allows us to move from high integrability spaces to spaces of lower integrability without losing regularity. Unlike the case of the pure Wiener randomization, the lower integrability threshold is not dictated by the Schr{\"o}dinger admissibility condition, but by~\eqref{eq:ExtendedRange} as we can make use of Strichartz estimates in that range of exponents due to the additional randomization in the angular variable.

In the deterministic setting, Strichartz estimates for the range~\eqref{eq:ExtendedRange} are obtained in spherically averaged spaces, i.e. in spaces with weaker angular integrability, see~\cite{Guo2016}. Exploiting that one can take an orthonormal basis of $L^2(S^3)$ consisting of spherical harmonics, these estimates reduce to uniform estimates for a family of Bessel functions. In fact, the Strichartz estimates in spherically averaged spaces are reduced to the estimate
\begin{equation}
	\label{eq:EstimateT2kpl1}
	\|T_2^{k+1}(h)\|_{L^q_t \cL^p_r} \lesssim \|h\|_{L^2_r}
\end{equation}
uniformly in $k$, see~(2.7) in~\cite{Guo2016}, where the operators $T_2^{k+1}$ are defined by
\begin{equation}
\label{eq:DefT2kpl1}
	T_2^{k+1}(h)(t,r) = r^{-1} \int_0^\infty e^{-\imu t \rho^2}J_{k+1}(r \rho) \chi_{2^0}(\rho) h(\rho)\rho^2 \dd\rho
\end{equation}
for all $k \in \N_0$ and $h \in L^2(0,\infty)$.
Due to the randomization in the angular variable, we can make use of estimate~\eqref{eq:EstimateT2kpl1}.

In order to optimally exploit the advantages of the randomization in frequency space and the angular variable as described above, it is crucial that we do not rely on the unit-scale Bernstein inequality as it is typically done in the case of the Wiener randomization, but employ the square function estimate from Lemma~\ref{lem:SquareFunction} to move from high to low integrability spaces.

Finally, we note that the additional randomization in physical space does not undermine the above advantages so that we obtain the same set of estimates for the linear Schr{\"o}dinger flow with data randomized with respect to the decomposition in frequency space, the angular variable, and physical space.

%%%%%%%%%%%%%%%%%%%%%%%%%%%%%%%%%%%%%%%%%%%%%%%%%%%%%%%%%%%%%%%%%%%%%%%%%%%%%%%%%%%%%%%%%%%%%%%%%%%%%%%%%%
%%%%%%%%%%%%%%%%%%%%%%%%%%%%%%%%%%%%%%%%%%%%%%%%%%%%%%%%%%%%%%%%%%%%%%%%%%%%%%%%%%%%%%%%%%%%%%%%%%%%%%%%%%
%%%%%%%%%%%%%%%%%%%%%%%%%%%%%%%%%%%%%%%%%%%%%%%%%%%%%%%%%%%%%%%%%%%%%%%%%%%%%%%%%%%%%%%%%%%%%%%%%%%%%%%%%%

\begin{prop}
	\label{prop:ProbabilisticEstimate}
	Let $q,p_0 \in [2,\infty)$ such that $(q,p_0)$ satisfies~\eqref{eq:ExtendedRange} and let $s \geq 0$ and $s' \in \R$. 
	\begin{enumerate}	
		\item \label{it:ProbabilisticEstimate1om} Take $f \in H^s(\R^4)$ and let $f^\om$ be its randomization from~\eqref{eq:DefRandomization1}. Then there is a constant $C > 0$ such that
	\begin{align*}
		\|\langle \nabla \rangle^{s'} e^{\imu t \Delta} f^\om \|_{L^\beta_\om \dot{B}^{s + \frac{2}{q} + \frac{4}{p_0} - 2}_{q,(p,\mu),2}} \leq C \sqrt{\beta} \|\langle \nabla \rangle^{s'} f\|_{\dot{H}^s}
	\end{align*}	 
	for all $\beta \in [1,\infty)$, $p \in [p_0,\infty)$, and $\mu \in [2,\infty)$.
	\item \label{it:ProbabilisticEstimate1tildeom} Take $f \in H^s(\R^4)$ and let $f^{\tilde{\om}}$ be its randomization from~\eqref{eq:RandomizationFreqAngleSpace}. Then there is a constant $C > 0$ such that
	\begin{align*}
		\| \langle \nabla \rangle^{s'} e^{\imu t \Delta} f^{\tilde{\om}} \|_{L^\beta_{\tilde{\om}} \dot{B}^{s + \frac{2}{q} + \frac{4}{p_0} - 2}_{q,(p,\mu),2}} \leq C \sqrt{\beta} \|f\|_{H^{s+s'}}
	\end{align*}	 
	for all $\beta \in [1,\infty)$, $p \in [p_0,\infty)$, and $\mu \in [2,\infty)$.
	\end{enumerate}
\end{prop}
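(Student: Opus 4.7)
My plan is to apply Minkowski's inequality to move $L^\beta_\om$ inside the Besov and space-time norms (reducing first to $\beta \geq \max(q,p,\mu)$, the general case following from H\"older in $\om$), use Lemma~\ref{lem:LargeDeviation} to convert the randomization into an $\ell^2$-square function, employ Lemma~\ref{lem:SquareFunction} to handle the $j$-summation, and bound each dyadic block via the frequency-$M$ version of Guo's spherically-averaged Strichartz~\eqref{eq:EstimateT2kpl1}, recombining via~\eqref{eq:PlancherelForgM}. Set $\sigma := s + \tfrac{2}{q} + \tfrac{4}{p_0} - 2$ and $s_c := 2 - \tfrac{4}{p_0} - \tfrac{2}{q}$ (so $\sigma + s_c = s$), and abbreviate $h^M_{k,l}(r\theta) := r^{-1} b_{k,l}(\theta) \int_0^\infty \hat c^M_{k,l}(\rho) J_{k+1}(M r \rho) \rho^2 \dd \rho$.

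Fix $N \in 2^\Z$. The frequency supports of $\hat c^M_{k,l}$ and $\psi_j$ force $M \sim N$ and $|j| \sim N$ in the expansion of $P_N f^\om$, while $\langle \nabla \rangle^{s'} P_N$ multiplies by $\langle N \rangle^{s'}$ up to constants. Lemma~\ref{lem:LargeDeviation} thus yields, pointwise in $(t,x)$,
\begin{align*}
  \bigl\|\langle \nabla \rangle^{s'} P_N e^{\imu t \Delta} f^\om\bigr\|_{L^\beta_\om}
  \lesssim \sqrt{\beta}\, \langle N \rangle^{s'} N^{-1} \Bigl(\sum_{M \sim N,\, |j| \sim N,\, k, l} \bigl|e^{\imu t \Delta} P_N P_j h^M_{k,l}\bigr|^2 \Bigr)^{1/2}.
\end{align*}
Applied pointwise in $(t, k, l, M)$, Lemma~\ref{lem:SquareFunction} (with $\tilde p = p_0$) bounds the $\ell^2_j$-square function of the $P_j(e^{\imu t \Delta} h^M_{k,l})$ in $\cL^p_r L^\mu_\theta$ by $\|e^{\imu t \Delta} h^M_{k,l}\|_{\cL^{p_0}_r L^\mu_\theta}$, and since $p, \mu, q \geq 2$, Minkowski's inequality in the $\ell^2$-sum over $(M, k, l)$ and then in $L^q_t$ leaves the quantity
\begin{align*}
  \Bigl( \sum_{M \sim N,\, k, l} \|e^{\imu t \Delta} h^M_{k,l}\|_{L^q_t \cL^{p_0}_r L^\mu_\theta}^2 \Bigr)^{1/2}.
\end{align*}

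The decisive single-term bound is
\begin{align*}
  \|e^{\imu t \Delta} h^M_{k,l}\|_{L^q_t \cL^{p_0}_r L^\mu_\theta} \lesssim \sqrt{\mu}\, M^{s_c - 1} \|\hat c^M_{k,l}\|_{L^2(\rho^3 \dd \rho)}.
\end{align*}
This follows from the factorisation $h^M_{k,l}(r\theta) = b_{k,l}(\theta) F_0(r)$: since $b_{k,l}$ is an eigenfunction of $\Delta_{S^3}$, the Schr\"odinger flow preserves the tensor structure, $e^{\imu t \Delta} h^M_{k,l}(r\theta) = b_{k,l}(\theta) F_t(r)$, and $\|b_{k,l}\|_{L^\mu_\theta} \lesssim \sqrt{\mu}$ by~\eqref{eq:BoundednessGoodFrame}; Guo's spherically-averaged Strichartz in the extended range~\eqref{eq:ExtendedRange} gives $\|F_t\|_{L^q_t \cL^{p_0}_r} \lesssim \|h^M_{k,l}\|_{\dot{H}^{s_c}}$, while~\eqref{eq:PlancherelForgM} and the frequency-$M$ localization yield $\|h^M_{k,l}\|_{\dot{H}^{s_c}} \sim M^{s_c} \|h^M_{k,l}\|_{L^2} \sim M^{s_c - 1} \|\hat c^M_{k,l}\|_{L^2(\rho^3 \dd \rho)}$. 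Inserting this together with the Plancherel identity $(\sum_{k, l} \|\hat c^N_{k, l}\|_{L^2(\rho^3 \dd \rho)}^2)^{1/2} \sim N^2 \|P_N f\|_{L^2}$ from~\eqref{eq:PlancherelForgM} and multiplying by $N^\sigma$ gives
\begin{align*}
  N^\sigma \|\langle \nabla \rangle^{s'} P_N e^{\imu t \Delta} f^\om\|_{L^\beta_\om L^q_t \cL^p_r L^\mu_\theta} \lesssim \sqrt{\beta \mu}\, N^s \langle N \rangle^{s'} \|P_N f\|_{L^2},
\end{align*}
and squaring and summing dyadically in $N$, absorbing $\sqrt{\mu}$ into the implicit constant, proves part~\eqref{it:ProbabilisticEstimate1om}.

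For part~\eqref{it:ProbabilisticEstimate1tildeom} the same procedure applies to the decomposition~\eqref{eq:decf2}; the extra index $i \in \Z^4$ enters the $\ell^2$-sum, the Plancherel identity now reads $(\sum_{k,l} \|\hat c^{N, i}_{k, l}\|_{L^2(\rho^3 \dd \rho)}^2)^{1/2} \sim N^2 \|P_N(\ph_i f)\|_{L^2}$, and the final dyadic sum is closed by Corollary~\ref{cor:CompLPPhysicalSpace} applied with $p = 2$, which controls $\sum_N N^{2s} \langle N \rangle^{2 s'} \sum_i \|P_N(\ph_i f)\|_{L^2}^2$ by $\|f\|_{H^{s + s'}}^2$. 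The main obstacle is the bookkeeping of $M$-scalings: one has to apply Guo's estimate at the correct frequency scale $M$, so that the gain $M^{s_c}$ appears via $\|h^M_{k,l}\|_{\dot{H}^{s_c}}$, rather than naively feeding $\hat c^M_{k,l}$ into the frequency-$1$ operator $T_2^{k+1}$ (which would miss the $M^{-2/q}$ factor coming from the time rescaling); the order of the Minkowski interchanges is also essential, since Lemma~\ref{lem:SquareFunction} is a spatial estimate and has to be applied before the $L^q_t$-integration and before the $\ell^2$-sum in $(M, k, l)$.
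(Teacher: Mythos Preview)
Your proof is correct and follows essentially the same route as the paper's: Minkowski to reduce to large $\beta$, large deviation (Lemma~\ref{lem:LargeDeviation}), the square-function estimate (Lemma~\ref{lem:SquareFunction}) to dispose of the $j$-sum, the good-frame bound~\eqref{eq:BoundednessGoodFrame} to replace $L^\mu_\theta$ by $L^2_\theta$, Guo's spherically averaged Strichartz, and finally~\eqref{eq:PlancherelForgM} together with Corollary~\ref{cor:CompLPPhysicalSpace} for part~(ii). The only cosmetic difference is that the paper rescales each $f^M_{k,l}$ to unit frequency and then invokes the operator $T_2^{k+1}$ and~\eqref{eq:EstimateT2kpl1} directly, whereas you stay at scale $M$ and phrase the single-term bound as $\|e^{\imu t\Delta}h^M_{k,l}\|_{L^q_t\cL^{p_0}_r L^\mu_\theta}\lesssim \sqrt{\mu}\,\|h^M_{k,l}\|_{\dot H^{s_c}}$; these are equivalent by scaling, so your closing warning about ``naively feeding $\hat c^M_{k,l}$ into the frequency-$1$ operator $T_2^{k+1}$'' is unnecessary --- that is precisely what the paper does, and the $M^{-2/q}$ factor appears automatically from the time rescaling. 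One small imprecision: $\langle\nabla\rangle^{s'}P_N$ does not literally multiply by $\langle N\rangle^{s'}$ pointwise, but writing $\langle\nabla\rangle^{s'}P_N=\langle N\rangle^{s'}\tilde m_N(D)$ with a uniformly bounded multiplier $\tilde m_N$ supported near $|\xi|\sim N$ fixes this without altering the rest of the argument.
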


\begin{proof}
	Since $\Omega$ is a probability space, it is enough to show the assertions for $\beta \geq \max\{q, p,\mu\} \geq 2$.
	
	(i) By definition and Minkowski's inequality, we have
	\begin{align*}
		&\|\langle \nabla \rangle^{s'} e^{\imu t \Delta} f^\omega \|_{L^\beta_\omega \dot{B}^{s + \frac{2}{q} + \frac{4}{p_0} - 2}_{q,(p,\mu),2}} \\
		&\sim \Big\| \Big( \sum_{M \in 2^{\Z}} \langle M \rangle^{2s'} M^{2(s + \frac{2}{q} + \frac{4}{p_0} - 2)} \| P_M e^{\imu t \Delta} f^{\omega} \|_{L^q_t \cL^p_r L^\mu_\theta}^2 \Big)^{\frac{1}{2}} \Big\|_{L^\beta_\omega } \\
		&\lesssim \Big(\sum_{M \in 2^{\Z}} \langle M \rangle^{2s'} M^{2(s +\frac{2}{q} + \frac{4}{p_0} - 2)} \|P_M e^{\imu t \Delta} f^{\omega} \|_{L^\beta_\omega L^q_t \cL^p_r L^\mu_\theta}^2 \Big)^{\frac{1}{2}}.
	\end{align*}
	Therefore, it is enough to prove 
	\begin{equation}
		\label{eq:ProbabilisticEstimateFirstReduction}
			M^{\frac{2}{q} + \frac{4}{p_0} - 2} \|P_M e^{\imu t \Delta} f^{\omega} \|_{L^\beta_\omega L^q_t \cL^p_r L^\mu_\theta} \lesssim \sqrt{\beta} \|\tilde{P}_M f\|_{L^2}
	\end{equation}
	for all $M \in 2^{\Z}$ with an implicit constant independent of $M$.
	
	We recall from the definition of $f^\omega$ in~\eqref{eq:DefRandomization1} that
	\begin{align*}
		f^\omega = \sum_{M \in 2^{\Z}} f^{M,\omega},
	\end{align*}
	where we use the notation
	\begin{align*}
		f^{M,\omega} &= \sum_{j \in \Z^4} \sum_{k = 0}^\infty \sum_{l = 1}^{N_k} X^{M}_{j,k,l}(\omega) f^{M}_{j,k,l}, \hspace{10em} f^{M}_{j,k,l} = P_j f^{M}_{k,l}, \\
		f^{M}_{k,l}(r\theta) &= a_k M^{-1} r^{-1} b_{k,l}(\theta) \int_0^\infty \hat{c}_{k,l}^M(\rho) J_{k+1}(M r \rho) \rho^2 \dd \rho .
	\end{align*}
	Recall from Subsection~\ref{subsec:Randomization} that the Fourier transform of $f^{M}_{j,k,l}$ is supported in the annulus $\{\frac{M}{2} < |\xi| < 2M\}$. In particular, we have
	\begin{align*}
		P_M e^{\imu t \Delta} f^\omega = P_M e^{\imu t \Delta} (f^{\frac{M}{2},\omega} + f^{M,\omega} + f^{2M, \omega}).
	\end{align*}
	Consequently, for~\eqref{eq:ProbabilisticEstimateFirstReduction} it is enough to prove
	\begin{equation}
		\label{eq:ProbabilisticEstimateSecondReduction}
		M^{\frac{2}{q} + \frac{4}{p_0} - 2} \|e^{\imu t \Delta} f^{M,\omega} \|_{L^\beta_\omega L^q_t \cL^p_r L^\mu_\theta} \lesssim \sqrt{\beta} \|P_M f\|_{L^2}
	\end{equation}
	for every $M \in 2^{\Z}$. Applying Minkowski's inequality and Lemma~\ref{lem:LargeDeviation}, we obtain 
	\begin{equation}
	\label{eq:ProbabilisticEstimateApplyingLargeDeviation}
	 \|e^{\imu t \Delta} f^{M,\omega} \|_{L^\beta_\omega L^q_t \cL^p_r L^\mu_\theta} \lesssim \sqrt{\beta} \|e^{\imu t \Delta} f^{M}_{j,k,l} \|_{L^q_t \cL^p_r L^\mu_\theta l^2_{j,k,l}} \lesssim \sqrt{\beta} \|P_j e^{\imu t \Delta} f^{M}_{k,l} \|_{l^2_{k,l} L^q_t \cL^p_r L^\mu_\theta l^2_{j}}. 
	\end{equation}
	We next employ Lemma~\ref{lem:SquareFunction} with $p_0$ on the right-hand side which yields
	\begin{equation}
	\label{eq:ProbabilisticEstimateSquareFunction}
		\|P_j e^{\imu t \Delta} f^{M}_{k,l} \|_{l^2_{k,l} L^q_t \cL^p_r L^\mu_\theta l^2_{j}} \lesssim \|e^{\imu t \Delta} f^{M}_{k,l} \|_{l^2_{k,l} L^q_t \cL^{p_0}_r L^\mu_\theta}.
	\end{equation}
	We thus obtain~\eqref{eq:ProbabilisticEstimateSecondReduction} from~\eqref{eq:ProbabilisticEstimateApplyingLargeDeviation} and~\eqref{eq:ProbabilisticEstimateSquareFunction} if we show
	\begin{equation}
		\label{eq:ProbabilisticEstimateThirdReduction}
			M^{\frac{2}{q} + \frac{4}{p_0} - 2}\|e^{\imu t \Delta} f^{M}_{k,l} \|_{l^2_{k,l} L^q_t \cL^{p_0}_r L^\mu_\theta} \lesssim \|P_M f\|_{L^2}.
	\end{equation}
	We recall from~\eqref{eq:DefgM} that $g^M = (P_M f)(M^{-1} \cdot)$ and we also rescale $f^M_{k,l}$ to unit frequency by setting $g^M_{k,l} = f^{M}_{k,l}(M^{-1} \cdot)$, i.e.
	\begin{equation}
	\label{eq:DefgMkl}
		 g^M_{k,l}(r \theta) = a_k r^{-1} b_{k,l}(\theta) \int_0^\infty \hat{c}_{k,l}^M(\rho) J_{k+1}(r \rho) \rho^2 \dd \rho.
	\end{equation}
	By scaling, \eqref{eq:ProbabilisticEstimateThirdReduction} is equivalent to
	\begin{equation}
		\label{eq:ProbabilisticEstimateFourthReduction}
		\|e^{\imu t \Delta} g^{M}_{k,l} \|_{l^2_{k,l} L^q_t \cL^{p_0}_r L^\mu_\theta} \lesssim \|g^M\|_{L^2}.
	\end{equation}
	Since $g^{M}_{k,l}$ has unit frequency and thus $e^{\imu t \Delta} g^{M}_{k,l} = \tilde{P}_{2^0} e^{\imu t \Delta} g^{M}_{k,l}$, it is enough to show
	\begin{equation}
		\label{eq:ProbabilisticEstimateFifthReduction}
		\|P_{2^0} e^{\imu t \Delta} g^{M}_{k,l} \|_{l^2_{k,l} L^q_t \cL^{p_0}_r L^\mu_\theta} \lesssim \|g^M\|_{L^2}
	\end{equation}
	in order to prove~\eqref{eq:ProbabilisticEstimateFourthReduction}. Theorem~3.10 in~\cite{SW71} and~\eqref{eq:DefgMkl} imply that we have the representation
	\begin{align*}
		(P_{2^0} e^{\imu t \Delta} g^{M}_{k,l})(r \theta) &= a_k r^{-1} b_{k,l}(\theta) \int_0^\infty \chi_{2^0}(\rho) e^{-\imu t \rho^2} \hat{c}^M_{k,l}(\rho) J_{k+1}(r \rho) \rho^2 \dd \rho \\
		&= a_k b_{k,l}(\theta) T_2^{k+1}(\hat{c}^M_{k,l})(t,r)
	\end{align*}
	with $T_2^{k+1}$ from~\eqref{eq:DefT2kpl1}. Employing property~\eqref{eq:BoundednessGoodFrame} of the good frame, estimate~\eqref{eq:EstimateT2kpl1} for $T_2^{k+1}$, and~\eqref{eq:PlancherelForgM}, we infer
	\begin{align*}
		\|P_{2^0} e^{\imu t \Delta} g^{M}_{k,l} \|_{l^2_{k,l} L^q_t \cL^{p_0}_r L^\mu_\theta} &\lesssim \| b_{k,l}  T_2^{k+1}(\hat{c}^M_{k,l}) \|_{l^2_{k,l} L^q_t \cL^{p_0}_r L^\mu_\theta} \lesssim \| T_2^{k+1}(\hat{c}^M_{k,l}) \|_{l^2_{k,l} L^q_t \cL^{p_0}_r} \\
		&\lesssim \| \hat{c}^M_{k,l}\|_{l^2_{k,l} L^2_\rho} \lesssim \| \hat{c}^M_{k,l}\|_{l^2_{k,l} \cL^2_\rho} \lesssim \|g^M\|_{L^2_x},
	\end{align*}
	i.e.~\eqref{eq:ProbabilisticEstimateFifthReduction}. In view of our reductions, we conclude the assertion of part~(i).
	
	(ii) For the second part we first note that 
		\begin{align*}
			\|\langle \nabla \rangle^{s'} (\ph_i f)\|_{l^2_i \dot{H}^s} \lesssim \|\ph_i f\|_{l^2_i H^{s+s'}} \lesssim \|f\|_{H^{s+s'}}
		\end{align*}
		by Corollary~\ref{cor:CompLPPhysicalSpace}. It is therefore enough to prove
		\begin{align*}
			\| \langle \nabla \rangle^{s'} e^{\imu t \Delta} f^{\tilde{\om}} \|_{L^\beta_{\tilde{\om}} \dot{B}^{s + \frac{2}{q} + \frac{4}{p_0} - 2}_{q,(p,\mu),2}} \lesssim \sqrt{\beta} \|\langle \nabla \rangle^{s'}(\ph_i f)\|_{l^2_i \dot{H}^s},
		\end{align*}
		which follows in the same way as part~(i).
\end{proof}

%%%%%%%%%%%%%%%%%%%%%%%%%%%%%%%%%%%%%%%%%%%%%%%%%%%%%%%%%%%%%%%%%%%%%%%%%%%%%%%%%%%%%%%%%%%%%%%%%%%%%%%%%
%%%%%%%%%%%%%%%%%%%%%%%%%%%%%%%%%%%%%%%%%%%%%%%%%%%%%%%%%%%%%%%%%%%%%%%%%%%%%%%%%%%%%%%%%%%%%%%%%%%%%%%%%
%%%%%%%%%%%%%%%%%%%%%%%%%%%%%%%%%%%%%%%%%%%%%%%%%%%%%%%%%%%%%%%%%%%%%%%%%%%%%%%%%%%%%%%%%%%%%%%%%%%%%%%%%
In view of the definition of the $Y(I)$-norm, we are particularly interested in the following estimates we obtain from the previous proposition. Note that choosing $L^2$ in time allows us to gain $\frac{3}{7}-$ derivatives for the linear flow.
\begin{cor}
	\label{cor:RandomizationInY}
	Let $s \geq 0$, $s' \in \R$, and $0 < \delta \ll 1$. Take $f \in H^s(\R^4)$ and let $f^\omega$ and $f^{\tilde{\omega}}$ denote its randomization from~\eqref{eq:DefRandomization1} and~\eqref{eq:RandomizationFreqAngleSpace}, respectively.
	\begin{enumerate}
		\item \label{it:CorProbabilisticEstimateom}
		There exists a constant $C > 0$ such that
			\begin{align*}
				\|\langle \nabla \rangle^{s'} e^{\imu t \Delta} f^\omega\|_{L^\beta_\om L^2_t \dot{B}^{s + \frac{3}{7} - \frac{2}{7}\delta}_{(p,\mu),2}} + \| \langle \nabla \rangle^{s'} e^{\imu t \Delta} f^\omega \|_{L^\beta_\om \dot{B}^s_{\frac{1}{\delta},(\tilde{p},\mu),2}}\leq C \sqrt{\beta} \| \langle \nabla \rangle^{s'} f\|_{\dot{H}^s}
			\end{align*}
			for all $p \in [\frac{14}{5-\delta},\infty)$, $\tilde{p} \in [\frac{2}{1-\delta},\infty)$, $\mu \in [2,\infty)$, and $\beta \in [1,\infty)$.
			
		\item \label{it:CorProbabilisticEstimatetildeom}
		There exists a constant $C > 0$ such that
			\begin{align*}
				\| \langle \nabla \rangle^{s'} e^{\imu t \Delta} f^{\tilde{\omega}}\|_{L^\beta_{\tilde{\om}} L^2_t \dot{B}^{s + \frac{3}{7} - \frac{2}{7} \delta}_{(p,\mu),2}} + \| \langle \nabla \rangle^{s'} e^{\imu t \Delta} f^{\tilde{\omega}} \|_{L^\beta_{\tilde{\om}} \dot{B}^s_{\frac{1}{\delta},(\tilde{p},\mu),2}}\leq C \sqrt{\beta} \|f\|_{H^{s+s'}}
			\end{align*}
			for all $p \in [\frac{14}{5-\delta},\infty)$, $\tilde{p} \in [\frac{2}{1-\delta},\infty)$, $\mu \in [2,\infty)$, and $\beta \in [1,\infty)$.
	\end{enumerate}
\end{cor}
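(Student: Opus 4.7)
The plan is to deduce both estimates directly from Proposition~\ref{prop:ProbabilisticEstimate} by choosing the admissibility pair $(q,p_0)$ appropriately for each of the two summands on the left-hand side. Before specializing, I note that the space-time Besov norm $\dot{B}^s_{q,(p,\mu),2}$ of Proposition~\ref{prop:ProbabilisticEstimate} has the $L^q_t$-norm sitting inside the $\ell^2_N$-sum; for $q=2$ Fubini gives $\dot{B}^s_{2,(p,\mu),2}=L^2_t\dot{B}^s_{(p,\mu),2}$, while for $q=\tfrac{1}{\delta}$ the corollary is stated directly in the space-time Besov form. Hence no Minkowski step is needed to match the norms on the two sides.

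For the first summand I would take $q=2$ and $p_0=\tfrac{14}{5-\delta}$. Since $\delta>0$, one has $p_0>\tfrac{14}{5}$, so the forbidden endpoint $(2,\tfrac{14}{5})$ is avoided, and condition~\eqref{eq:ExtendedRange} reduces to the trivial arithmetic
\begin{align*}
\frac{1}{2}=\frac{1}{q}\le \frac{7}{2}\Bigl(\frac{1}{2}-\frac{5-\delta}{14}\Bigr)=\frac{1}{2}+\frac{\delta}{4}.
\end{align*}
The resulting Besov-regularity shift coming from Proposition~\ref{prop:ProbabilisticEstimate} is
\begin{align*}
\frac{2}{q}+\frac{4}{p_0}-2 = 1+\frac{2(5-\delta)}{7}-2 = \frac{3}{7}-\frac{2}{7}\delta,
\end{align*}
which matches the exponent stated in the corollary, while the admissible range $p\in[p_0,\infty)$ becomes exactly $[\tfrac{14}{5-\delta},\infty)$.

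For the second summand I would instead take the Schr\"odinger-admissible pair $q=\tfrac{1}{\delta}$ and $p_0=\tfrac{2}{1-\delta}$. Schr\"odinger admissibility is strictly stronger than~\eqref{eq:ExtendedRange}, so admissibility is automatic, and the shift $\tfrac{2}{q}+\tfrac{4}{p_0}-2 = 2\delta+2(1-\delta)-2 = 0$ vanishes, giving precisely the $\dot{B}^s_{1/\delta,(\tilde p,\mu),2}$-bound for $\tilde p\in[\tfrac{2}{1-\delta},\infty)$. Summing the two bounds and applying the triangle inequality yields part~(\ref{it:CorProbabilisticEstimateom}); part~(\ref{it:CorProbabilisticEstimatetildeom}) follows in the identical manner by invoking part~(\ref{it:ProbabilisticEstimate1tildeom}) of Proposition~\ref{prop:ProbabilisticEstimate} in place of part~(\ref{it:ProbabilisticEstimate1om}).

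There is no genuine obstacle to overcome: the corollary is purely a numerical specialization of the preceding proposition, its entire content being the two arithmetic identities above and the one-line verification of~\eqref{eq:ExtendedRange} for the first chosen pair. The only conceptual point worth highlighting is that it is precisely the $L^2_t$-endpoint of the extended range~\eqref{eq:ExtendedRange}, inaccessible through Schr\"odinger admissibility alone, that produces the derivative gain of $\tfrac{3}{7}-\tfrac{2}{7}\delta$ advertised in the discussion preceding the corollary.
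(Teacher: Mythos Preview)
Your proposal is correct and follows essentially the same approach as the paper: both proofs apply Proposition~\ref{prop:ProbabilisticEstimate} with the pairs $(q,p_0)=(2,\tfrac{14}{5-\delta})$ and $(q,p_0)=(\tfrac{1}{\delta},\tfrac{2}{1-\delta})$, together with the Fubini identification $\dot{B}^{s+\frac{3}{7}-\frac{2}{7}\delta}_{2,(p,\mu),2}=L^2_t\dot{B}^{s+\frac{3}{7}-\frac{2}{7}\delta}_{(p,\mu),2}$. Your write-up simply spells out the arithmetic checks (verification of~\eqref{eq:ExtendedRange} and the regularity shifts) that the paper leaves implicit.
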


\begin{proof}
	Note that $L^2(I, \dot{B}^{s + \frac{3}{7} - \frac{2}{7}\delta}_{(p,\mu),2}(\R^4)) = \dot{B}^{s + \frac{3}{7} - \frac{2}{7}\delta}_{2,(p,\mu),2}(I \times \R^4)$ for any time interval $I$. Since $(2, \frac{14}{5-\delta})$ and $(\frac{1}{\delta}, \frac{2}{1-\delta})$ both satisfy~\eqref{eq:ExtendedRange}, the assertions follow from Proposition~\ref{prop:ProbabilisticEstimate} applied with $p_0 = \frac{14}{5-\delta}$ and $\tilde{p}_0 = \frac{2}{1-\delta}$.
\end{proof}

%%%%%%%%%%%%%%%%%%%%%%%%%%%%%%%%%%%%%%%%%%%%%%%%%%%%%%%%%%%%%%%%%%%%%%%%%%%%%%%%%%%%%%%%%%%%%%%%%%%%%%%%%
%%%%%%%%%%%%%%%%%%%%%%%%%%%%%%%%%%%%%%%%%%%%%%%%%%%%%%%%%%%%%%%%%%%%%%%%%%%%%%%%%%%%%%%%%%%%%%%%%%%%%%%%%
%%%%%%%%%%%%%%%%%%%%%%%%%%%%%%%%%%%%%%%%%%%%%%%%%%%%%%%%%%%%%%%%%%%%%%%%%%%%%%%%%%%%%%%%%%%%%%%%%%%%%%%%%
We are now ready to provide the improvement in space-time integrability of the linear flow $e^{\imu t \Delta} f^{\tilde{\om}}$ with data randomized in frequency space, the angular variable and in physical space which originates in the unit-scale decomposition in physical space. We obtain the same set of estimates as for the pure physical-space randomization, see~\cite{S2020, NY2019}. Hence, the additional randomization in frequency space and the angular variable does not impair the improvements of the physical-space randomization.
\begin{prop}
\label{prop:ProbabilisticEstimate2}
	Let $s \in \R$ with $s \geq 0$. Take $q \in [1,\infty)$, $p \in [2,\infty)$, and $\sigma \geq 0$ such that
	\begin{align*}
		2 - \frac{1}{q} - \frac{4}{p} - \sigma > 0.
	\end{align*}
	Pick $f \in H^s(\R^4)$ and let $f^{\tilde{\omega}}$ denote its randomization from~\eqref{eq:RandomizationFreqAngleSpace}. Then there exists a constant $C > 0$ such that
	\begin{align*}
		\|t^\sigma e^{\imu t \Delta} f^{\tilde{\omega}} \|_{L^\beta_{\tilde{\omega}}L^q_{[1,\infty)} \dot{B}^{s}_{p,2}} \leq C \sqrt{\beta} \|f\|_{H^s}
	\end{align*}
	for all $\beta \in [1,\infty)$.
\end{prop}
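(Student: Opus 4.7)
The plan is to adapt the almost-sure decay estimate for the pure physical-space randomization from~\cite{NY2019, M2019, S2020}, absorbing the additional randomizations in the frequency index $j$ and the angular indices $(k,l)$ via orthogonality. Since $\Omega$ is a probability space, I may assume $\beta \geq \max\{q,p\}$. By Minkowski's inequality and the Besov definition,
\[
\|t^\sigma e^{\imu t\Delta} f^{\tilde\omega}\|_{L^\beta_{\tilde\omega} L^q_{[1,\infty)} \dot{B}^s_{p,2}}^2 \lesssim \sum_{M \in 2^\Z} M^{2s} \|t^\sigma P_M e^{\imu t\Delta} f^{\tilde\omega}\|_{L^\beta_{\tilde\omega} L^q_{[1,\infty)} L^p_x}^2,
\]
so it suffices to establish, for each $M \in 2^{\Z}$, the block estimate
\[
\|t^\sigma P_M e^{\imu t\Delta} f^{\tilde\omega}\|_{L^\beta_{\tilde\omega} L^q_{[1,\infty)} L^p_x} \lesssim \sqrt\beta \Big(\sum_{i \in \Z^4} \|P_M(\ph_i f)\|_{L^2_x}^2\Big)^{1/2};
\]
summing with $M^{2s}$ weights and invoking Corollary~\ref{cor:CompLPPhysicalSpace} then yields $\sqrt\beta\|f\|_{H^s}$.

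Since each atom $g^{M,i}_{j,k,l}$ has Fourier support in $\{|\xi|\sim M\}$, only dyadic scales $\tilde M \sim M$ contribute; I focus on $\tilde M = M$, the rest being symmetric. Moving $L^\beta_{\tilde\omega}$ inside by Minkowski and applying Lemma~\ref{lem:LargeDeviation} pointwise in $(t,x)$,
\[
\|P_M e^{\imu t\Delta} f^{\tilde\omega}(t,x)\|_{L^\beta_{\tilde\omega}} \lesssim \sqrt\beta \Big(\sum_{i,j,k,l} |P_M e^{\imu t\Delta} g^{M,i}_{j,k,l}(t,x)|^2\Big)^{1/2}.
\]
The embedding $l^2 \hookrightarrow L^p_x$ (valid since $p\geq 2$) and its time analogue when $q \geq 2$ reduce the $L^q_{[1,\infty)} L^p_x$-norm of the square function to $\bigl(\sum_{i,j,k,l} \|t^\sigma P_M e^{\imu t\Delta} g^{M,i}_{j,k,l}\|_{L^q L^p}^2\bigr)^{1/2}$. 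In the range $q<2$ one first writes $\|t^\sigma F\|_{L^q_{[1,\infty)}} \leq \|t^{-\alpha}\|_{L^{q_0}_{[1,\infty)}} \|t^{\sigma+\alpha}F\|_{L^2_{[1,\infty)}}$ with $1/q = 1/q_0 + 1/2$ and $\alpha > 1/q-1/2$; a choice of $\alpha$ compatible with the forthcoming time integral exists precisely under the hypothesis $2 - 1/q - 4/p - \sigma > 0$. For each atom, the dispersive estimate combined with the $L^p$-boundedness of $P_M$ gives
\[
\|t^\sigma P_M e^{\imu t\Delta} g^{M,i}_{j,k,l}\|_{L^q_{[1,\infty)} L^p_x} \lesssim \|t^{\sigma - 2(1-2/p)}\|_{L^q_{[1,\infty)}} \|g^{M,i}_{j,k,l}\|_{L^{p'}_x},
\]
where the time integral converges exactly under the hypothesis.

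The aggregation of the atomic $L^{p'}$-norms is the heart of the argument. For $p' \leq 2$ the reverse-Minkowski inequality $(\sum_j\|P_jh\|_{L^{p'}}^2)^{1/2} \leq \|(\sum_j|P_jh|^2)^{1/2}\|_{L^{p'}}$ combined with the Littlewood-Paley square function estimate handles the $j$-index and reduces matters to $\bigl(\sum_{k,l}\|h^{M,i}_{k,l}\|_{L^{p'}_x}^2\bigr)^{1/2}$. The $(k,l)$-summation is then controlled via the $L^2(S^3)$-orthogonality of the good frame, which yields the angular Plancherel identity
\[
\int_{S^3}\sum_{k,l}|h^{M,i}_{k,l}(r\theta)|^2\,d\theta = \int_{S^3}|P_M(\ph_i f)(r\theta)|^2\,d\theta,
\]
combined with the physical-space almost-localization $\|P_M(\ph_i f)\|_{L^{p'}_x} \lesssim \|\ph_i f\|_{L^2_x}$. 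The latter follows by decomposing $P_M(\ph_i f) = \sum_{l'}\ph_{l'} P_M(\ph_i f)$, applying H\"older on each unit cube $\supp\ph_{l'}$, and invoking Lemma~\ref{lem:Mismatch} to produce the rapid decay $\|\ph_{l'}P_M(\ph_i f)\|_{L^2} \lesssim \langle l'-i\rangle^{-D}\|\ph_i f\|_{L^2}$. Squaring, summing over $i$, and applying Corollary~\ref{cor:CompLPPhysicalSpace} closes the argument.

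The main obstacle is this last aggregation step: the good frame is orthogonal only in $L^2(S^3)$, not in $L^{p'}(S^3)$, so the passage from the $j$-summed atomic $L^{p'}$ estimate to a clean control by $\|\ph_i f\|_{L^2}$ cannot proceed by pure angular Plancherel but must route through the physical-space localization of $P_M(\ph_i f)$ near cube $i$, for which the mismatch bounds in Lemma~\ref{lem:Mismatch} reconciling the frequency projections $P_j$, $P_M$ with the physical-space partition $\ph_i$ are essential.
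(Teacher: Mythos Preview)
Your proof has a genuine gap in the aggregation step, and the difficulty you flag at the end is precisely where it fails. The problem is the \emph{order} in which you apply the dispersive estimate and reassemble the atoms. You apply the dispersive estimate to each atom $g^{M,i}_{j,k,l}$ individually and are then left with
\[
\Big(\sum_{j,k,l}\|P_j\tilde f^M_{i,k,l}\|_{L^{p'}_x}^2\Big)^{1/2}.
\]
For the $j$-sum you invoke a square-function bound $\|(\sum_j|P_jh|^2)^{1/2}\|_{L^{p'}}\lesssim\|h\|_{L^{p'}}$ for the unit-scale projections; but this is a Rubio de Francia type inequality which is only available for exponents $\geq 2$ and \emph{fails} for $p'<2$ (take $\hat h=\one_{B(0,R)}$: the square function has $L^{p'}$-norm $\sim R^2$ while $\|h\|_{L^{p'}}\sim R^{4-4/p'}$). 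Even granting the $j$-step, the $(k,l)$-aggregation via angular Plancherel leaves you with $\|P_M(\ph_i f)\|_{\cL^{p'}_rL^2_\theta}$, not $\|P_M(\ph_i f)\|_{L^{p'}_x}$; since $p'\leq 2$, H\"older on the sphere goes the wrong way and the physical-space mismatch bounds from Lemma~\ref{lem:Mismatch} do not control this mixed norm.

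The paper avoids both obstructions by reversing the order: it first reassembles the $j$- and $(k,l)$-pieces on the \emph{high-integrability side} $L^p_x$, where $p\geq 2$ makes the square-function trick work (via the pointwise bound~\eqref{eq:SquareFunctionEstimate} and Young on $L^{p/2}$), and where angular Parseval plus the good-frame bound reduce $\|e^{\imu t\Delta}\tilde f^M_{i,k,l}\|_{L^p_xl^2_{k,l}}$ to $\|e^{\imu t\Delta}P_M(\ph_i f)\|_{L^p_x}$. Only \emph{then} is the dispersive estimate applied, to the already-reassembled $P_M(\ph_i f)$, producing the single factor $\|P_M(\ph_i f)\|_{L^{p'}_x}$ to which Corollary~\ref{cor:CompLPPhysicalSpace} applies directly. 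Your ``main obstacle'' paragraph correctly senses the difficulty but does not resolve it; the resolution is to move the dispersive step to the end.
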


\begin{proof}
	As $\Omega$ is a probability space, it is enough to show the assertion for $\beta \geq \max\{p,q\} \geq 2$.
	
	We first recall from the definition of $f^{\tilde{\omega}}$ in~\eqref{eq:decf2} that
	\begin{align*}
		f^{\tilde{\omega}} = \sum_{M \in 2^\Z} f^{M, \tilde{\omega}}
	\end{align*}
	with
	\begin{align*}
		f^{M,\tilde{\omega}} &= \sum_{i,j \in \Z^4} \sum_{k = 0}^\infty \sum_{l = 1}^{N_k} X_{i,j,k,l}^M(\tilde{\omega}) \tilde{f}^M_{i,j,k,l}, \hspace{6em} \tilde{f}^{M}_{i,j,k,l} = P_j \tilde{f}^M_{i,k,l}, \\
		\tilde{f}^M_{i,k,l} &= a_k M^{-1} r^{-1} b_{k,l}(\theta) \int_0^\infty \hat{c}^{M,i}_{k,l}(\rho) J_{k+1}(M r \rho) \rho^2 \dd \rho.
	\end{align*}
	We further recall from~\eqref{eq:decphif} that
	\begin{align*}
		P_M(\ph_i f) = \sum_{k = 0}^\infty \sum_{l = 1}^{N_k} \tilde{f}^M_{i,k,l}.
	\end{align*}
	Since the support of the Fourier transform of $f^{M, \tilde{\omega}}$ is contained in $\{\frac{M}{2} < |\xi| < 2M\}$,  we first observe that
	\begin{align*}
		\| t^{\sigma} e^{\imu t \Delta} f^{\tilde{\omega}} \|_{L^\beta_{\tilde{\omega}}L^q_{[1,\infty)} \dot{B}^{s}_{p,2}} \lesssim \Big\| \Big(\sum_{M \in 2^{\Z}} M^{2s} \|t^{\sigma} e^{\imu t \Delta} f^{M, \tilde{\omega}}\|_{L^p_x}^2 \Big)^{\frac{1}{2}} \Big\|_{L^\beta_{\tilde{\omega}}L^q_{[1,\infty)}}.
	\end{align*}
	Applying Minkowski's inequality and Lemma~\ref{lem:LargeDeviation}, we thus infer
	\begin{align}
		\|t^\sigma e^{\imu t \Delta} f^{\tilde{\omega}} \|_{L^\beta_{\tilde{\omega}}L^q_{[1,\infty)} \dot{B}^{s}_{p,2}} &\lesssim \|t^{\sigma} M^s e^{\imu t \Delta} P_j \tilde{f}^M_{i,k,l} \|_{L^q_{[1,\infty)} l^2_M L^p_x l^2_{i,j,k,l}} \nonumber\\
		&\lesssim \|t^\sigma M^s e^{\imu t \Delta} P_j \tilde{f}^M_{i,k,l} \|_{L^q_{[1,\infty)} l^2_M l^2_i L^p_x l^2_{k,l} l^2_j}. \label{eq:ProbabilisticEstimateTime1}
	\end{align}
	We next want to estimate the $l^2_j$-norm. In order to not lose the summability in $l$ and $k$, we cannot simply apply Minkowski's inequality in $p$ and Lemma~\ref{lem:SquareFunction}. Instead, we note that by~\eqref{eq:SquareFunctionEstimate} and Young's inequality we have
	\begin{align}
		\| P_j e^{\imu t \Delta} \tilde{f}^M_{i,k,l} \|_{L^p_x l^2_{k,l} l^2_j} &\lesssim \|  (|\check{\psi}| \ast |e^{\imu t \Delta} \tilde{f}^M_{i,k,l}|^2)^{\frac{1}{2}}\|_{L^p_x l^2_{k,l}}
		= \| |\check{\psi}| \ast \|e^{\imu t \Delta} \tilde{f}^M_{i,k,l}\|_{l^2_{k,l}}^2 \|_{L^{\frac{p}{2}}_x}^{\frac{1}{2}} \nonumber\\
		&\lesssim \|\check{\psi}\|_{L^1_x}^{\frac{1}{2}} \|\|e^{\imu t \Delta} \tilde{f}^M_{i,k,l}\|_{l^2_{k,l}}^2 \|_{L^{\frac{p}{2}}_x}^{\frac{1}{2}} \lesssim \|e^{\imu t \Delta} \tilde{f}^M_{i,k,l}\|_{L^p_x l^2_{k,l}}. \label{eq:ProbabilisticEstimateTime2}
	\end{align}
	Using the definition of $\hat{c}^{M,i}_{k,l}$, Theorem~3.10 in~\cite{SW71}, and rescaling, we further obtain the representation
	\begin{align*}
	e^{\imu t \Delta} \tilde{f}^M_{i,k,l}(r\theta) &= \Big[a_k M^{-1} r^{-1} \int_0^\infty e^{-\imu t M^2 \rho^2} \hat{c}^{M,i}_{k,l}(\rho) J_{k+1}(M r \rho) \rho^2 \dd \rho\Big] \cdot b_{k,l}(\theta) \\
	&=: d^{M}_{i,k,l}(t,r) \cdot b_{k,l}(\theta), \\
		e^{\imu t \Delta} P_M (\ph_i f)(r\theta) &= \sum_{k = 0}^\infty \sum_{l = 1}^{N_k} e^{\imu t \Delta} \tilde{f}^M_{i,k,l}(r \theta) = \sum_{k = 0}^\infty \sum_{l = 1}^{N_k} d^{M}_{i,k,l}(t,r) b_{k,l}(\theta).
	\end{align*}
	In particular, $(d^M_{i,k,l}(t,r))_{k,l}$ are the coefficients of $e^{\imu t \Delta} P_M(\ph_i f)(r \cdot)$ expanded in the orthonormal basis $(b_{k,l})_{k,l}$ of $L^2(S^3)$. Parseval's identity thus yields
	\begin{align*}
		\| d^{M}_{i,k,l}(t,r)\|_{l^2_{k,l}} = \| e^{\imu t \Delta} P_M(\ph_i f)(r \cdot) \|_{L^2_\theta}.
	\end{align*}
	Applying Minkowski's inequality, property~\eqref{eq:BoundednessGoodFrame} of the good frame, and H{\"o}lder's inequality on the sphere, we therefore derive
	\begin{align}
		\|e^{\imu t \Delta} \tilde{f}^M_{i,k,l}\|_{L^p_x l^2_{k,l}} &\lesssim \| d^M_{i,k,l} \cdot b_{k,l} \|_{\cL_r^p l^2_{k,l} L^p_\theta} \lesssim \| d^M_{i,k,l} \|_{\cL_r^p l^2_{k,l}} \lesssim \| e^{\imu t \Delta} P_M(\ph_i f) \|_{\cL^p_r L^2_\theta} \nonumber\\
		&\lesssim \| e^{\imu t \Delta} P_M(\ph_i f) \|_{L^p_x}. \label{eq:ProbabilisticEstimateTime3}
	\end{align}
	The dispersive estimate further implies that
	\begin{align*}
		 \| e^{\imu t \Delta} P_M(\ph_i f) \|_{L^p_x} \lesssim t^{-2 + \frac{4}{p}}  \| P_M(\ph_i f) \|_{L^{p'}_x}
	\end{align*}
	for all $t > 0$. Combining this estimate with~\eqref{eq:ProbabilisticEstimateTime3}, \eqref{eq:ProbabilisticEstimateTime2}, \eqref{eq:ProbabilisticEstimateTime1}, and Corollary~\ref{cor:CompLPPhysicalSpace}, we arrive at
	\begin{align*}
		\|t^\sigma e^{\imu t \Delta} f^{\tilde{\omega}} \|_{L^\beta_{\tilde{\omega}}L^q_{[1,\infty)} \dot{B}^{s}_{p,2}} &\lesssim \|t^\sigma t^{-2 + \frac{4}{p}} M^s P_M(\ph_i f)\|_{L^q_{[1,\infty)} l^2_M l^2_i L_x^{p'}} \\
		&= \|t^{-2 + \frac{4}{p}+\sigma}\|_{L^q_{[1,\infty)}} \| M^s P_M(\ph_i f)\|_{l^2_M l^2_i L_x^{p'}} \lesssim \|f\|_{H^s},
	\end{align*}
	where we also used the assumption on $q$, $p$, and $\sigma$ in the last step.
\end{proof}

While the randomization in frequency space and the angular variable worked together perfectly in Proposition~\ref{prop:ProbabilisticEstimate}, the advantages of the physical-space randomization cannot be used simultaneously in the same way. In order to make use of all the advantages, one thus has to interpolate between the different estimates. This allows us to derive the following estimate, which is the decisive ingredient in the proof of almost sure scattering in Section~\ref{sec:AlmostSureScattering}.
\begin{prop}
\label{prop:L1LinftyEstimate}
	Let $s > \frac{5}{7}$. Take $f \in H^s(\R^4)$ and let $f^{\tilde{\omega}}$ denote its randomization from~\eqref{eq:RandomizationFreqAngleSpace}. Then there is a constant $C > 0$ such that
	\begin{align*}
		\| \nabla e^{\imu t \Delta} f^{\tilde{\omega}}\|_{L^{\beta}_{\tilde{\omega}}L^1_t L^\infty_x} \leq C \sqrt{\beta} \|f\|_{H^s}
	\end{align*}
	for all $\beta \in [1,\infty)$.
\end{prop}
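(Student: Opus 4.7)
Let $u = e^{\imu t \Delta} f^{\tilde\om}$. I would argue dyadically in frequency, writing $\nabla u = \sum_{M \in 2^{\Z}} \nabla P_M u$ and estimating each summand in $L^1_t L^\infty_x$. The key point is that neither Corollary~\ref{cor:RandomizationInY}(ii) alone (a smoothing gain of $3/7-$ derivatives in $L^2_t$) nor Proposition~\ref{prop:ProbabilisticEstimate2} alone (time decay on $[1,\infty)$) suffices; one must interpolate the two via a well-chosen time cutoff depending on $M$.

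For low frequencies $M \leq 1$, Bernstein already gives $\|\nabla P_M u\|_{L^\infty_x} \lesssim M^3 \|P_M u\|_{L^2_x}$, so on $[0,1]$ we use conservation of $L^2$ to get $\|\nabla P_M u\|_{L^1_{[0,1]} L^\infty_x} \lesssim M^3 \|P_M f^{\tilde\om}\|_{L^2_x}$, while on $[1,\infty)$ we apply Proposition~\ref{prop:ProbabilisticEstimate2} with $q=1$, $\sigma=0$, and some $p > 4$ combined with Bernstein $L^p \hookrightarrow L^\infty$ at frequency $M$, yielding a bound of the form $M^{1+4/p-s} d_M$ with $\bigl\|(d_M)\bigr\|_{\ell^2_M L^\beta_{\tilde\om}} \lesssim \sqrt{\beta}\|f\|_{H^s}$. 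Both contributions are $\ell^1$-summable in $M \leq 1$ since the exponents $3$ and $1+4/p-s$ are positive, and we end up with a bound $\lesssim \sqrt\beta \|f\|_{H^s}$ via Minkowski in $\tilde\om$ and Cauchy--Schwarz.

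The main work is the high-frequency regime $M \geq 1$, where I would introduce a parameter $T_M \geq 1$ and split $L^1_t = L^1_{[0,T_M]} + L^1_{[T_M,\infty)}$. Applying Cauchy--Schwarz in time, Bernstein in both the radial and angular variables with large exponents $p_1,\mu_1$, and Corollary~\ref{cor:RandomizationInY}(ii) with $s'=0$ gives
\[
\|\nabla P_M u\|_{L^1_{[0,T_M]} L^\infty_x} \lesssim T_M^{1/2} M^{1+4/p_1} \|P_M u\|_{L^2_t \cL^{p_1}_r L^{\mu_1}_\theta} \lesssim T_M^{1/2} M^{\lambda_1} a_M,
\]
where $\lambda_1 = 1+4/p_1-s-\tfrac{3}{7}+\tfrac{2\delta}{7}$ and $\bigl\|(a_M)\bigr\|_{\ell^2_M L^\beta_{\tilde\om}} \lesssim \sqrt\beta\|f\|_{H^s}$. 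For the tail, H\"older in time with weight $t^\sigma$ (so $\|t^{-\sigma}\|_{L^{q'}_{[T_M,\infty)}} \lesssim T_M^{-(\sigma - 1/q')}$ provided $\sigma q' > 1$), Bernstein, and Proposition~\ref{prop:ProbabilisticEstimate2} yield
\[
\|\nabla P_M u\|_{L^1_{[T_M,\infty)} L^\infty_x} \lesssim T_M^{-\gamma} M^{\lambda_2} b_M,
\]
with $\gamma := \sigma - 1/q'$, $\lambda_2 := 1+4/p_2-s$, and $\bigl\|(b_M)\bigr\|_{\ell^2_M L^\beta_{\tilde\om}} \lesssim \sqrt\beta\|f\|_{H^s}$, valid when $2 - 1/q - 4/p_2 - \sigma > 0$.

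Balancing the two contributions with $T_M = M^{(\lambda_2-\lambda_1)/(1/2+\gamma)}$ (which is $\geq 1$ for $M \geq 1$ because $\lambda_2 > \lambda_1$) yields
\[
\|\nabla P_M u\|_{L^1_t L^\infty_x} \lesssim M^{(\lambda_2 + 2\gamma \lambda_1)/(1+2\gamma)} (a_M + b_M).
\]
Letting $p_1,p_2 \to \infty$ and $\delta \to 0$, we have $\lambda_1 \to \tfrac{4}{7}-s$ and $\lambda_2 \to 1-s$, so the exponent tends to $\bigl((1-s) + 2\gamma(\tfrac{4}{7}-s)\bigr)/(1+2\gamma)$. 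The constraint $\sigma < 2 - 1/q - 4/p_2$ together with $\gamma = \sigma - 1/q' = \sigma + 1/q - 1$ forces $\gamma < 1 - 4/p_2$, so $\gamma$ can be taken arbitrarily close to $1$ (in the limit $q\to 1^+$, $p_2\to\infty$, $\sigma\to 1^-$), in which case the exponent approaches $(1-s + 2(\tfrac{4}{7}-s))/3 = \tfrac{5}{7} - s$. Choosing parameters so this is strictly negative — which is possible precisely when $s > 5/7$ — Cauchy--Schwarz in $M$ together with Minkowski in $\tilde\om$ finishes the estimate.

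The main obstacle is the interpolation bookkeeping: the $L^2_t$-smoothing is most effective over short times while the physical-space dispersive decay only kicks in for $t \geq 1$ and must be converted to an $L^1_t$-in-time bound via a $t^\sigma$ weight, and the Bernstein cost $M^{4/p}$ has to be absorbed into exponents small enough that the supremum $\gamma \nearrow 1$ can be taken without crossing the constraint of Proposition~\ref{prop:ProbabilisticEstimate2}. It is the interplay of these three competing constraints that produces exactly the $\tfrac{5}{7}$ threshold.
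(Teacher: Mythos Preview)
Your approach is correct and rests on the same two ingredients as the paper---Corollary~\ref{cor:RandomizationInY}\ref{it:CorProbabilisticEstimatetildeom} for the $\tfrac{3}{7}-$ smoothing and Proposition~\ref{prop:ProbabilisticEstimate2} for the physical-space decay---but the interpolation is packaged differently. The paper avoids the frequency-dependent cutoff $T_M$: it first uses Sobolev embedding to replace $L^\infty_x$ by $\dot{B}^{1+\delta}_{8/\delta,2}$, splits time at $t=\pm 1$ independently of frequency, and on $[1,\infty)$ interpolates at the level of the Besov norm via H\"older in the dyadic sum, writing
\[
\|\,\cdot\,\|_{\dot{B}^{1+\delta}_{8/\delta,2}} \le \|\,\cdot\,\|_{\dot{B}^{8/7+}_{8/\delta,2}}^{(2-\eta)/3}\,\|\,\cdot\,\|_{\dot{B}^{5/7+}_{8/\delta,2}}^{(1+\eta)/3}.
\]
The first factor is placed in $L^2_t$ and controlled by Corollary~\ref{cor:RandomizationInY}, the second carries a $t^{3/2-}$ weight and is controlled by Proposition~\ref{prop:ProbabilisticEstimate2}. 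Your balancing $T_M=M^{(\lambda_2-\lambda_1)/(1/2+\gamma)}$ with $\gamma\nearrow 1$ reproduces exactly the interpolation weights $\tfrac{2}{3}$ and $\tfrac{1}{3}$, so the arithmetic producing the $\tfrac{5}{7}$ threshold is identical. Your version makes the dyadic origin of the threshold more transparent; the paper's version applies the two propositions as black boxes and does not need to extract the per-$M$ pieces $a_M,b_M$ from inside their proofs. Two minor points: take $\mu_1=p_1$ so that $\cL^{p_1}_r L^{p_1}_\theta=L^{p_1}_x$ and the Bernstein cost is genuinely $M^{4/p_1}$, and remember the time line is $\R$, so a symmetric argument on $(-\infty,0]$ is needed.
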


\begin{proof}
	We fix $0 < \delta \ll 1$ such that
	\begin{align*}
		\frac{5}{7} +2 \delta < s
	\end{align*}
	and set $\eta = 2 \delta$. Note that
	\begin{align}
		\| \nabla e^{\imu t \Delta} f^{\tilde{\omega}}\|_{L^{\beta}_{\tilde{\omega}}L^1_{t}L^\infty_x} &\lesssim \|\langle \nabla \rangle^{\delta} \nabla e^{\imu t \Delta} f^{\tilde{\omega}}\|_{L^{\beta}_{\tilde{\omega}}L^1_{t}L^{\frac{8}{\delta}}_x} \nonumber\\
		&\lesssim \|e^{\imu t \Delta} f^{\tilde{\omega}}\|_{L^{\beta}_{\tilde{\omega}}L^1_{t}\dot{B}^{1}_{\frac{8}{\delta},2}} + \|e^{\imu t \Delta} f^{\tilde{\omega}}\|_{L^{\beta}_{\tilde{\omega}}L^1_{t}\dot{B}^{1+\delta}_{\frac{8}{\delta},2}}, \label{eq:EstL1LinftySplit}
	\end{align}
	where we used Sobolev's embedding in the first estimate.
	
	We split the time interval into $\R = (-\infty, 1] \cup (-1,1) \cup [1,\infty)$. Using H{\"o}lder's inequality and Corollary~\ref{cor:RandomizationInY}~\ref{it:CorProbabilisticEstimatetildeom}, we estimate on $(-1,1)$
	\begin{align}
		\|e^{\imu t \Delta} f^{\tilde{\omega}}\|_{L^{\beta}_{\tilde{\omega}}L^1_{(-1,1)}\dot{B}^{1+\delta}_{\frac{8}{\delta},2}} &\lesssim \| e^{\imu t \Delta} f^{\tilde{\omega}}\|_{L^{\beta}_{\tilde{\omega}}L^2_{(-1,1)} \dot{B}^{1+\delta}_{\frac{8}{\delta},2}} \lesssim \| e^{\imu t \Delta} f^{\tilde{\omega}}\|_{L^{\beta}_{\tilde{\omega}}L^2_{t} \dot{B}^{\frac{4}{7} + \frac{9}{7} \delta + \frac{3}{7} - \frac{2}{7} \delta}_{\frac{8}{\delta},2}} \nonumber\\
		&\lesssim \sqrt{\beta} \, \|f\|_{H^{\frac{4}{7} + \frac{9}{7}\delta }} \lesssim \sqrt{\beta}\, \|f\|_{H^s}.
		\label{eq:EstL1Linftycompact}
	\end{align}
	For the estimate on $[1,\infty)$, we first observe that H{\"o}lder's inequality implies
	\begin{align*}
		&\|e^{\imu t \Delta} f^{\tilde{\omega}}\|_{\dot{B}^{1+\delta}_{\frac{8}{\delta},2}} 
		= \Big(\sum_{N \in 2^\Z} N^{2 + 2 \delta}\|P_N e^{\imu t \Delta} f^{\tilde{\omega}}\|_{L^{\frac{8}{\delta}}_x}^2 \Big)^{\frac{1}{2}} \\
		& =\Big(\sum_{N \in 2^\Z} (N^{2 + 2 \delta + \frac{2}{7}(1 + \frac{3\eta}{2-\eta})}\|P_N e^{\imu t \Delta} f^{\tilde{\omega}}\|_{L^{\frac{8}{\delta}}_x}^2)^{\frac{2 - \eta}{3}} (N^{2 + 2 \delta - \frac{4}{7}}\|P_N e^{\imu t \Delta} f^{\tilde{\omega}}\|_{L^{\frac{8}{\delta}}_x}^2)^{\frac{1 + \eta}{3}} \Big)^{\frac{1}{2}} \\
		&\leq \Big(\Big(\sum_{N \in 2^\Z} (N^{\frac{16}{7} + 2 \delta + \frac{6\eta}{14-7\eta}}\|P_N e^{\imu t \Delta} f^{\tilde{\omega}}\|_{L^{\frac{8}{\delta}}_x}^2\Big)^{\frac{2 - \eta}{3}} \Big(\sum_{N \in 2^{\Z}} N^{\frac{10}{7} + 2 \delta}\|P_N e^{\imu t \Delta} f^{\tilde{\omega}}\|_{L^{\frac{8}{\delta}}_x}^2\Big)^{\frac{1 + \eta}{3}} \Big)^{\frac{1}{2}} \\
	&= \|e^{\imu t \Delta} f^{\tilde{\omega}}\|_{\dot{B}^{\frac{8}{7} + \delta + \frac{3\eta}{14-7\eta}}_{\frac{8}{\delta},2}}^{\frac{2-\eta}{3}} \|e^{\imu t \Delta} f^{\tilde{\omega}}\|_{\dot{B}^{\frac{5}{7} + \delta}_{\frac{8}{\delta},2}}^{\frac{1+\eta}{3}}.
	\end{align*}
	Applying H{\"o}lder's inequality in time, we thus obtain
	\begin{align}
		&\|e^{\imu t \Delta} f^{\tilde{\omega}}\|_{L^{\beta}_{\tilde{\omega}}L^1_{[1,\infty)}\dot{B}^{1+\delta}_{\frac{8}{\delta},2}} 
		\lesssim \Big\|t^{\frac{1+\delta}{2}}\|e^{\imu t \Delta} f^{\tilde{\omega}}\|_{\dot{B}^{\frac{8}{7} + \delta + \frac{3\eta}{14-7\eta}}_{\frac{8}{\delta},2}}^{\frac{2-\eta}{3}} \|e^{\imu t \Delta} f^{\tilde{\omega}}\|_{\dot{B}^{\frac{5}{7} + \delta}_{\frac{8}{\delta},2}}^{\frac{1+\eta}{3}} \Big\|_{L^{\beta}_{\tilde{\omega}}L^2_{[1,\infty)}} \nonumber\\
		&\lesssim \|e^{\imu t \Delta} f^{\tilde{\omega}}\|_{L^{\beta}_{\tilde{\omega}}L^2_{[1,\infty)}\dot{B}^{\frac{8}{7} +\delta + \frac{3\eta}{14-7\eta}}_{\frac{8}{\delta},2}}^{\frac{2-\eta}{3}} \|t^{\frac{3}{2} \frac{1+\delta}{1+\eta}} e^{\imu t \Delta} f^{\tilde{\omega}}\|_{L^{\beta}_{\tilde{\omega}}L^2_{[1,\infty)}\dot{B}^{\frac{5}{7} + \delta}_{\frac{8}{\delta},2}}^{\frac{1+\eta}{3}} \nonumber\\
		&\lesssim \|e^{\imu t \Delta} f^{\tilde{\omega}}\|_{L^{\beta}_{\tilde{\omega}}L^2_{t}\dot{B}^{\frac{5}{7} + \frac{9}{7}\delta + \frac{3\eta}{14-7\eta} + \frac{3}{7} - \frac{2}{7}\delta}_{\frac{8}{\delta},2}}^{\frac{2-\eta}{3}} \|t^{\frac{3}{2} \frac{1+\delta}{1+\eta}}e^{\imu t \Delta} f^{\tilde{\omega}}\|_{L^{\beta}_{\tilde{\omega}}L^2_{[1,\infty)}\dot{B}^{\frac{5}{7} + \delta}_{\frac{8}{\delta},2}}^{\frac{1+\eta}{3}}. \label{eq:EstL1LinftyReduction}
	\end{align}
	Recalling that $\eta = 2 \delta \ll 1$, Corollary~\ref{cor:RandomizationInY}~\ref{it:CorProbabilisticEstimatetildeom} now shows that
	\begin{align*}
		\|e^{\imu t \Delta} f^{\tilde{\omega}}\|_{L^{\beta}_{\tilde{\omega}}L^2_{t}\dot{B}^{\frac{5}{7} + \frac{9}{7}\delta + \frac{3\eta}{14-7\eta} + \frac{3}{7} - \frac{2}{7}\delta}_{\frac{8}{\delta},2}} \lesssim \sqrt{\beta} \|f\|_{H^{\frac{5}{7} + \frac{9}{7}\delta + \frac{3\eta}{14-7\eta}}} \lesssim \sqrt{\beta} \|f\|_{H^s},
	\end{align*}
	while Proposition~\ref{prop:ProbabilisticEstimate2} gives
	\begin{align*}
		\|t^{\frac{3}{2} \frac{1+\delta}{1+\eta}} e^{\imu t \Delta} f^{\tilde{\omega}}\|_{L^{\beta}_{\tilde{\omega}}L^2_{[1,\infty)}\dot{B}^{\frac{5}{7} + \delta}_{\frac{8}{\delta},2}} \lesssim \sqrt{\beta} \|f\|_{H^{\frac{5}{7}+\delta}} \lesssim \sqrt{\beta} \|f\|_{H^s}
	\end{align*}
	as
	\begin{align*}
		2 - \frac{1}{2} - \frac{\delta}{2} - \frac{3}{2} \cdot \frac{1+\delta}{1+\eta} = \frac{3}{2} \cdot \frac{\eta - \delta}{1+\eta} - \frac{\delta}{2} = \frac{3}{2} \cdot \frac{\delta}{1+2\delta} - \frac{\delta}{2} > 0.
	\end{align*}
	In view of~\eqref{eq:EstL1LinftyReduction}, we conclude that
	\begin{align}
	\label{eq:EstL1Linftyunbounded}
		\|e^{\imu t \Delta} f^{\tilde{\omega}}\|_{L^{\beta}_{\tilde{\omega}}L^1_{[1,\infty)}\dot{B}^{1+\delta}_{\frac{8}{\delta},2}} 
		\lesssim \sqrt{\beta} \|f\|_{H^s}.
	\end{align}
	By time reversal, we also obtain~\eqref{eq:EstL1Linftyunbounded} with $[1,\infty)$ replaced by $(-\infty,1]$. In combination with~\eqref{eq:EstL1Linftycompact}, we thus arrive at
	\begin{align*}
		\|e^{\imu t \Delta} f^{\tilde{\omega}}\|_{L^{\beta}_{\tilde{\omega}}L^1_{t}\dot{B}^{1+\delta}_{\frac{8}{\delta},2}} 
		\lesssim \sqrt{\beta} \|f\|_{H^s}.
	\end{align*}
	Analogously, we infer
	\begin{align*}
		\|e^{\imu t \Delta} f^{\tilde{\omega}}\|_{L^{\beta}_{\tilde{\omega}}L^1_{t}\dot{B}^{1}_{\frac{8}{\delta},2}} 
		\lesssim \sqrt{\beta} \|f\|_{H^s},
	\end{align*}
	which finishes the proof in view of~\eqref{eq:EstL1LinftySplit}.
\end{proof}

%%%%%%%%%%%%%%%%%%%%%%%%%%%%%%%%%%%%%%%%%%%%%%%%%%%%%%%%%%%%%%%%%%%%%%%%%%%%%%%%%%%%%%%%%%%%%%%%%%%%%%%%%%
%%%%%%%%%%%%%%%%%%%%%%%%%%%%%%%%%%%%%%%%%%%%%%%%%%%%%%%%%%%%%%%%%%%%%%%%%%%%%%%%%%%%%%%%%%%%%%%%%%%%%%%%%%
%%%%%%%%%%%%%%%%%%%%%%%%%%%%%%%%%%%%%%%%%%%%%%%%%%%%%%%%%%%%%%%%%%%%%%%%%%%%%%%%%%%%%%%%%%%%%%%%%%%%%%%%%%
\section{Almost sure local wellposedness}
\label{sec:AlmostSureLocWP}

In this section we prove the local wellposedness of the forced cubic NLS~\eqref{eq:ForcednlSchrdfandf} in the functional framework of Subsection~\ref{subsec:FunctionalFramework} for forcing terms $F \in Y(\R)$. We obtain $e^{\imu t \Delta} f^\omega \in Y(\R)$ almost surely from Corollary~\ref{cor:RandomizationInY} so that Theorem~\ref{thm:locwp} is an immediate consequence.

For the rest of this section, we fix $0 < \delta \ll 1$ and set $\nu = \frac{16}{7} \delta$. The key ingredient in the proof of the local wellposedness theory is the following set of trilinear estimates, which arise from the cubic nonlinearity in the $G(I)$-norm. After frequency localizing the inputs and ordering them by the size of the frequency, we then place those trilinear terms where $v$ appears at highest frequency in the $L^1_t L^2_x$-component of the $G(I)$-norm and estimate them by a combination of deterministic Strichartz estimates and Bernstein's inequality. If the forcing $F$ appears at highest frequency, we place the corresponding trilinear terms in the $L^{\frac{2}{1+2\delta}}_t \cL^{\frac{14}{9+\delta}}_r L^{2}_\theta$-component of the $G(I)$-norm, which gains $\frac{3}{7}-$ derivatives. We then estimate the forcing part of highest frequency in a space which gains another $\frac{3}{7}-$ derivatives in the probabilistic setting.

\begin{lem}
	\label{lem:TrilinearEstimates}
	Let $N, N_1, N_2, N_3 \in 2^{\Z}$ with $N \lesssim N_1$ and $N_3 \leq N_2 \leq N_1$. We then have
	\begin{align}
		N \| v_1  v_2  v_3 \|_{L^1_t L^2_x} &\lesssim \Big(\frac{N}{N_1} \Big) \Big( \frac{N_3}{N_2}\Big)^{\frac{2}{3}} \|v_1\|_{X_{N_1}} \|v_2\|_{X_{N_2}} \|v_3\|_{X_{N_3}}, \label{eq:EstTrilinvvv} \\
		N \| v_1  v_2  F_3 \|_{L^1_t L^2_x} &\lesssim \Big(\frac{N}{N_1} \Big) \Big( \frac{N_3}{N_2}\Big)^{1-2\delta} \|v_1\|_{X_{N_1}} \|v_2\|_{X_{N_2}} \|F_3\|_{Y_{N_3}} , \label{eq:EstTrilinvvF} \\
		N \| v_1  F_2  v_3 \|_{L^1_t L^2_x} &\lesssim \Big(\frac{N}{N_1} \Big) \Big( \frac{N_3}{N_2}\Big)^{\frac{1}{7}} \|v_1\|_{X_{N_1}} \|F_2\|_{Y_{N_2}} \|v_3\|_{X_{N_3}}, \label{eq:EstTrilinvFv} \\
		N \| v_1  F_2 F_3 \|_{L^1_t L^2_x} &\lesssim \Big(\frac{N}{N_1} \Big) \Big( \frac{N_3}{N_2}\Big)^{\frac{4}{7} + \nu} \|v_1\|_{X_{N_1}} \|F_2\|_{Y_{N_2}} \|F_3\|_{Y_{N_3}},  \label{eq:EstTrilinvFF} \\
		N^{\frac{4}{7}+\nu} \| F_1  v_2 v_3 \|_{L^{\frac{2}{1+2\delta}}_t \cL^{\frac{14}{9+\delta}}_r L^2_\theta} &\lesssim \Big(\frac{N}{N_1} \Big)^{\frac{4}{7}+\nu}  \Big( \frac{N_3}{N_2}\Big)^{\delta} \|F_1\|_{Y_{N_1}} \|v_2\|_{X_{N_2}} \|v_3\|_{X_{N_3}} , \label{eq:EstTrilinFvv} \\
		N^{\frac{4}{7}+\nu} \| F_1  v_2 F_3 \|_{L^{\frac{2}{1+2\delta}}_t \cL^{\frac{14}{9+\delta}}_r L^2_\theta} &\lesssim \Big(\frac{N}{N_1} \Big)^{\frac{4}{7}+\nu} \Big( \frac{N_3}{N_2}\Big)^{\frac{1}{2}} \|F_1\|_{Y_{N_1}} \|v_2\|_{X_{N_2}} \|F_3\|_{Y_{N_3}}, \label{eq:EstTrilinFvF} \\
		N^{\frac{4}{7}+\nu} \| F_1  F_2 v_3 \|_{L^{\frac{2}{1+2\delta}}_t \cL^{\frac{14}{9+\delta}}_r L^2_\theta} &\lesssim \Big(\frac{N}{N_1} \Big)^{\frac{4}{7}+\nu}  \Big( \frac{N_3}{N_2}\Big)^{\frac{1}{7}} \|F_1\|_{Y_{N_1}} \|F_2\|_{Y_{N_2}} \|v_3\|_{X_{N_3}}, \label{eq:EstTrilinFFv} \\
		N^{\frac{4}{7}+\nu} \| F_1  F_2 F_3 \|_{L^{\frac{2}{1+2\delta}}_t \cL^{\frac{14}{9+\delta}}_r L^2_\theta} &\lesssim \Big(\frac{N}{N_1} \Big)^{\frac{4}{7}+\nu} \Big( \frac{N_3}{N_2}\Big)^{\frac{1}{7}} \|F_1\|_{Y_{N_1}} \|F_2\|_{Y_{N_2}} \|F_3\|_{Y_{N_3}} \label{eq:EstTrilinFFF}
	\end{align}
	for all $v_j \in X_{N_j}(I \times \R^4)$, $F_j \in Y_{N_j}(I \times \R^4)$, and time intervals $I \subseteq \R$, where all the space-time norms are taken over $I \times \R^4$.
\end{lem}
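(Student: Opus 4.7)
The plan is to establish all eight trilinear estimates through a unified Hölder--Bernstein scheme, selecting for each case appropriate component norms of $X_{N_j}$ or $Y_{N_j}$ and relying on Bernstein's inequality applied to the lowest-frequency factor to produce the advertised $(N_3/N_2)^\alpha$ gain.

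For the first four estimates (4.3)--(4.6), whose output is $L^1_t L^2_x$ and in which $v_1$ sits at the highest frequency, I would apply Hölder in time and space so that the two higher-frequency factors are controlled by admissible Strichartz pairs from $X_N$ (interpolated between $(\tfrac{1}{\delta}, \tfrac{2}{1-\delta})$ and $(2,4)$) for $v$'s, or by the isotropic components $L^{1/\delta}_t L^{14/(5-\delta)}_x$ and $L^{1/\delta}_t L^{28}_x$ of $Y_N$ for $F$'s. The lowest-frequency factor is then pushed into a higher spatial integrability via Bernstein starting from one of these component norms. Combining the Bernstein loss with the derivative weights $N_j^{-1}$ from $X_{N_j}$ and $N_j^{-1/7}$, $N_j^{-4/7-\nu}$ from $Y_{N_j}$ produces the precise factor $(N/N_1)(N_3/N_2)^\alpha$. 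For instance, in (4.3), choosing $v_1, v_2$ in interpolated admissible pairs with Bernstein upgrading $v_3$ to the ``$L^\infty_x$-side'' produces Bernstein factors of the form $N_2^{1/3}N_3^{5/3}$ against derivative weights $N_1^{-1}N_2^{-1}N_3^{-1}$, which yields $(N/N_1)(N_3/N_2)^{2/3}$ after multiplication by $N$.

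For the last four estimates (4.7)--(4.10), whose output is the spherically averaged $L^{2/(1+2\delta)}_t \cL^{14/(9+\delta)}_r L^2_\theta$ and in which the highest-frequency factor is the forcing $F_1$, the crucial ingredient is the angularly concentrated component $L^2_t \cL^{14/(5-\delta)}_r L^{4/\delta}_\theta$ or $L^2_t \cL^{28}_r L^{4/\delta}_\theta$ of $Y_{N_1}$. Placing $F_1$ into one of these two components and Höldering angularly against $L^{4/\delta}_\theta$ factors for any further $F$'s, or against $L^\infty_\theta$ factors for $v$'s (obtained by Sobolev embedding on $S^3$ applied to a frequency-localized Bernstein estimate from $X_{N_j}$), collapses the angular part to the required $L^2_\theta$. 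Radial Hölder against $\cL^{14/(5-\delta)}_r$ or $\cL^{28}_r$ norms then yields $\cL^{14/(9+\delta)}_r$, while the time exponents balance $L^2_t$ on $F_1$ with $L^{1/\delta}_t$ or $L^2_t$ on the remaining factors to give $L^{2/(1+2\delta)}_t$. A final Bernstein on the lowest-frequency factor produces the $(N_3/N_2)^\alpha$ gain, matched so as to recover the exponents $\delta, \tfrac12, \tfrac17$ stated.

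The main obstacle will be the extensive bookkeeping: each of the eight cases requires independent choices of which admissible pair from $X_N$ or which of the four components of $Y_N$ to use, subject to simultaneously balancing the Hölder equations in time, radial and angular variables; moreover, the Bernstein factor on $N_3$ must be distributed \emph{asymmetrically} against the derivative weights to yield exactly the claimed exponent $\alpha$ rather than the trivially weaker symmetric choice. The isotropic $Y_N$ components are decisive in the $L^1_t L^2_x$-output half, since they remove the need for an angular Hölder step; conversely, the angularly concentrated $L^{4/\delta}_\theta$ components are indispensable in the spherically averaged half, since the output norm $L^2_\theta$ cannot otherwise be closed without losing derivatives.
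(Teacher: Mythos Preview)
Your overall plan---H\"older plus Bernstein plus interpolation between the components of $X_N$ and $Y_N$---is exactly the paper's approach, and your treatment of the $L^1_t L^2_x$-output estimates (4.3)--(4.6) is essentially correct (the paper sometimes places the middle-frequency $F$ in the $L^2_t$-based component of $Y_N$ rather than the $L^{1/\delta}_t$-based one, but this is a minor variation).

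For the spherically averaged estimates (4.7)--(4.10), however, your proposal has two concrete gaps. First, the time H\"older does not balance as you describe: placing $F_1$ in $L^2_t$ and the remaining two factors in $L^{1/\delta}_t$ gives $\tfrac12 + 2\delta$, whereas the output exponent is $\tfrac{1+2\delta}{2} = \tfrac12 + \delta$. The paper resolves this by a preliminary interpolation between the $L^2_t$ and $L^{1/\delta}_t$ components of $Y_N$ (together with H\"older on the sphere) to obtain
\[
N_1^{\frac{4}{7}+\nu}\langle N_1\rangle^{2\delta}\,\|F_1\|_{L^{\frac{2}{1-2\delta}}_t \cL^{p}_r L^{\mu}_\theta} \lesssim \|F_1\|_{Y_{N_1}}
\]
for a range of $p,\mu$; with $F_1$ in $L^{2/(1-2\delta)}_t$ and the other two factors in $L^{1/\delta}_t$ the time exponents do match. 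Second, your appeal to ``Sobolev embedding on $S^3$'' to put $v_j$ into $L^\infty_\theta$ is not available: frequency localization at $N_j$ in $\R^4$ gives no control on angular derivatives (a plane wave $e^{ix\cdot\xi_0}$ with $|\xi_0|\sim N_j$ has angular frequency $\sim r N_j$ on the sphere $|x|=r$). The paper instead places the $v_j$'s, and any lower-frequency $F$'s, in \emph{isotropic} norms $L^p_x = \cL^p_r L^p_\theta$ obtained from $X_{N_j}$ or $Y_{N_j}$ via Bernstein in $\R^4$; the angular H\"older then closes because $F_1$ sits in $L^\mu_\theta$ with $\mu$ chosen (via the interpolated estimate above) so that $\tfrac{1}{\mu} + \tfrac{1}{p_2} + \tfrac{1}{p_3} = \tfrac12$. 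Both fixes are straightforward once identified, but without them the scheme as you wrote it does not close.
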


\begin{proof}
 	We first note that by interpolation between the individual parts of the $Y_N$-norms and H{\"o}lder's inequality on the sphere, we obtain
 	\begin{align}
 		&N_j^{\frac{4}{7} + \nu} \langle N_j \rangle^{4\delta} \|F_j\|_{L^2_t \cL^p_r L^{\mu}_\theta} \lesssim \|F_j\|_{Y_{N_j}},\qquad\langle N_j \rangle^{\frac{1}{7}} \|F_j\|_{L^{\frac{1}{\delta}}_t L^p_x} \lesssim \|F_j\|_{Y_{N_j}} \label{eq:InterploationYNj}
 	\end{align}
 	for all $p \in [\frac{14}{5-\delta},28]$, $\mu \in [1,\frac{4}{\delta}]$, and $j \in \{1,2,3\}$.
	Interpolating between these two estimates next and using H{\"o}lder's inequality on the sphere again, we get
 	\begin{align}
 	\label{eq:CollectionEstimatesYNj}
 		&N_j^{\frac{4}{7}+\nu}\langle N_j \rangle^{2\delta}\|F_j\|_{L^{\frac{2}{1-2\delta}}_t \cL^{p}_r L^{\mu}_\theta} \lesssim \|F_j\|_{Y_{N_j}}
 	\end{align}
 	for all $p \in [\frac{14}{5-\delta},28]$, $\mu \in [1,\frac{1}{\delta}]$, and $j \in \{1,2,3\}$.

	All the estimates in the lemma follow from a combination of H{\"o}lder's inequality, Bernstein's inequality, estimates~\eqref{eq:InterploationYNj} and~\eqref{eq:CollectionEstimatesYNj}, and interpolation between $L^{\frac{1}{\delta}}_tL^{\frac{2}{1-\delta}}_x$ and $L^2_t L^4_x$. For~\eqref{eq:EstTrilinvvv} we infer in this way
	\begin{align*}
		&N \| v_1  v_2  v_3 \|_{L^1_t L^2_x} \lesssim N \|v_1\|_{L^2_t L^4_x} \|v_2\|_{L^3_t L^4_x} \|v_3\|_{L^6_t L^\infty_x} \\
		&\lesssim \Big( \frac{N}{N_1} \Big)\Big(\frac{N_3}{N_2}\Big)^{\frac{2}{3}} N_1 \|v_1\|_{L^2_t L^4_x} N_2 \|v_2\|_{L^3_t L^3_x} N_3 \|v_3\|_{L^6_t L^{\frac{12}{5}}_x} \\
		&\lesssim \Big( \frac{N}{N_1} \Big)\Big(\frac{N_3}{N_2}\Big)^{\frac{2}{3}} \| v_1 \|_{X_{N_1}} \|v_2 \|_{X_{N_2}} \|v_3\|_{X_{N_3}}.
	\end{align*}
	For~\eqref{eq:EstTrilinvvF} we find
	\begin{align*}
		&N \| v_1  v_2  F_3 \|_{L^1_t L^2_x} \lesssim N \|v_1\|_{L^2_t L^4_x} \|v_2\|_{L^{\frac{2}{1-2\delta}}_t L^4_x} \|F_3\|_{L^{\frac{1}{\delta}}_t L^\infty_x} \\
		&\lesssim \Big(\frac{N}{N_1}\Big) N_1 \|v_1\|_{L^2_t L^4_x} N_2 \|v_2\|_{L^{\frac{2}{1-2\delta}}_t L^{\frac{4}{1+2\delta}}_x} \Big( \frac{N_3}{N_2} \Big)^{1-2\delta} \|F_3\|_{L^{\frac{1}{\delta}}_t L^{\frac{4}{1-2\delta}}_x} \\
		&\lesssim \Big(\frac{N}{N_1}\Big)\Big( \frac{N_3}{N_2} \Big)^{1-2\delta} \|v_1\|_{X_{N_1}} \|v_2\|_{X_{N_2}} \|F_3\|_{Y_{N_3}}.
	\end{align*}
	To prove~\eqref{eq:EstTrilinvFv}, we estimate
	\begin{align*}
		&N \| v_1  F_2  v_3 \|_{L^1_t L^2_x} \lesssim N \|v_1\|_{L^{\frac{2}{1-2\delta}}_t L^{\frac{4}{1+2\delta}}_x} \|F_2\|_{L^2_t L^{28}_x} \|v_3\|_{L^{\frac{1}{\delta}}_t L^{\frac{14}{3-7\delta}}_x} \\
		&\lesssim \Big( \frac{N}{N_1} \Big) N_1 \|v_1\|_{L^{\frac{2}{1-2\delta}}_t L^{\frac{4}{1+2\delta}}_x} N_2^{\frac{4}{7}+\nu} \|F_2\|_{L^2_t L^{\frac{28}{4+7\nu}}_x} \Big(\frac{N_3}{N_2}\Big)^{\frac{1}{7}} N_3 \|v_3\|_{L^{\frac{1}{\delta}}_t L^{\frac{2}{1-\delta}}_x} \\
		&\lesssim \Big( \frac{N}{N_1} \Big)\Big(\frac{N_3}{N_2}\Big)^{\frac{1}{7}} \|v_1\|_{X_{N_1}} \|F_2\|_{Y_{N_2}} \|v_3\|_{X_{N_3}}.
	\end{align*}
	Next we show~\eqref{eq:EstTrilinvFF}. Here we infer
	\begin{align*}
		&N \| v_1  F_2  F_3 \|_{L^1_t L^2_x} \lesssim N \|v_1\|_{L^{\frac{2}{1-2\delta}}_t L^{\frac{4}{1+2\delta}}_x} \|F_2\|_{L^2_t L^{\frac{4}{1-2\delta}}_x} \|F_3\|_{L^{\frac{1}{\delta}}_t L^\infty_x} \\
		&\lesssim \Big( \frac{N}{N_1} \Big) N_1 \|v_1\|_{L^{\frac{2}{1-2\delta}}_t L^{\frac{4}{1+2\delta}}_x} N_2^{\frac{4}{7}+\nu} \|F_2\|_{L^2_t L^{\frac{4}{1-2\delta}}_x} \Big(\frac{N_3}{N_2} \Big)^{\frac{4}{7}+\nu} \|F_3\|_{L^{\frac{1}{\delta}}_t L^{\frac{7}{1+4\delta}}_x} \\
		&\lesssim \Big( \frac{N}{N_1} \Big) \Big(\frac{N_3}{N_2} \Big)^{\frac{4}{7}+\nu} \|v_1\|_{X_{N_1}} \|F_2\|_{Y_{N_2}} \|F_3\|_{Y_{N_3}}.
 	\end{align*}
 	
 	To deduce~\eqref{eq:EstTrilinFvv}, we estimate
 	\begin{align*}
 		&N^{\frac{4}{7}+\nu} \| F_1  v_2 v_3 \|_{L^{\frac{2}{1+2\delta}}_t \cL^{\frac{14}{9+\delta}}_r L^2_\theta} 
 		\lesssim N^{\frac{4}{7}+\nu} \|F_1\|_{L^{\frac{2}{1-2\delta}}_t \cL^{\frac{7}{1+11\delta}}_r L^{\frac{2}{3\delta}}_\theta} \|v_2\|_{L^{\frac{1}{\delta}}_t L^{\frac{4}{1-3\delta}}_x} \|v_3\|_{L^{\frac{1}{\delta}}_t L^{\frac{4}{1-3\delta}}_x} \\
 		&\lesssim \Big(\frac{N}{N_1}\Big)^{\frac{4}{7} + \nu} N_1^{\frac{4}{7}+\nu+2\delta} \|F_1\|_{L^{\frac{2}{1-2\delta}}_t \cL^{\frac{7}{1+11\delta}}_r L^{\frac{2}{3\delta}}_\theta} \Big(\frac{N_2}{N_1}\Big)^{\delta} N_2 \|v_2\|_{L^{\frac{1}{\delta}}_t L^{\frac{2}{1-\delta}}_x} \\
 		&\hspace{25em} \cdot \Big(\frac{N_3}{N_1}\Big)^{\delta} N_3 \|v_3\|_{L^{\frac{1}{\delta}}_t L^{\frac{2}{1-\delta}}_x} \\
 		&\lesssim \Big( \frac{N}{N_1} \Big)^{\frac{4}{7} + \nu} \Big(\frac{N_3}{N_2}\Big)^{\delta} \|F_1\|_{Y_{N_1}} \|v_2\|_{X_{N_2}} \|v_3\|_{X_{N_3}}.
 	\end{align*}
 	Next, estimate~\eqref{eq:EstTrilinFvF} follows from
 	\begin{align*}
 		&N^{\frac{4}{7}+\nu} \| F_1  v_2 F_3 \|_{L^{\frac{2}{1+2\delta}}_t \cL^{\frac{14}{9+\delta}}_r L^2_\theta} \\
 		&\lesssim N^{\frac{4}{7}+\nu} \| F_1 \|_{L^{\frac{2}{1-2\delta}}_t \cL^{\frac{28}{7 + 16 \delta}}_r L^{\frac{28}{3 + 14\delta}}_\theta} \|v_2\|_{L^{\frac{1}{\delta}}_t L^{\frac{8}{3-4\delta}}_x} \|F_3\|_{L^{\frac{1}{\delta}} L^{56}_x} \\
 		&\lesssim \Big(\frac{N}{N_1}\Big)^{\frac{4}{7}+\nu}  N_1^{\frac{4}{7} + \nu} \|F_1\|_{L^{\frac{2}{1-2\delta}}_t \cL^{\frac{28}{7 + 16 \delta}}_r L^{\frac{28}{3 + 14\delta}}_\theta} N_2 \|v_2\|_{L^{\frac{1}{\delta}}_t L^{\frac{2}{1-\delta}}_x} \Big(\frac{N_3}{N_2}\Big)^{\frac{1}{2}} \|F_3\|_{L^{\frac{1}{\delta}}_t L^7_x} \\
 		&\lesssim \Big(\frac{N}{N_1}\Big)^{\frac{4}{7}+\nu} \Big(\frac{N_3}{N_2}\Big)^{\frac{1}{2}} \|F_1\|_{Y_{N_1}} \|v_2\|_{X_{N_2}} \|F_3\|_{Y_{N_3}}.
 	\end{align*}
 	Continuing with~\eqref{eq:EstTrilinFFv}, we obtain
 	\begin{align*}
 		&N^{\frac{4}{7}+\nu} \| F_1  F_2 v_3 \|_{L^{\frac{2}{1+2\delta}}_t \cL^{\frac{14}{9+\delta}}_r L^2_\theta} \\
 		&\lesssim N^{\frac{4}{7}+\nu} \| F_1 \|_{L^{\frac{2}{1-2\delta}}_t \cL^{\frac{14}{5-\delta}}_r L^{\frac{14}{3 -2\delta}}_\theta} \| F_2 \|_{L^{\frac{1}{\delta}}_t L^{\frac{14}{1+9\delta}}_x} \|v_3\|_{L^{\frac{1}{\delta}}_t L^{\frac{14}{3-7\delta}}_x} \\
 		&\lesssim \Big(\frac{N}{N_1}\Big)^{\frac{4}{7} + \nu} N_1^{\frac{4}{7} + \nu} \| F_1 \|_{L^{\frac{2}{1-2\delta}}_t \cL^{\frac{14}{5-\delta}}_r L^{\frac{14}{3 -2\delta}}_\theta} N_2^{\frac{1}{7}} \| F_2 \|_{L^{\frac{1}{\delta}}_t L^{\frac{14}{1+9\delta}}_x} \Big(\frac{N_3}{N_2}\Big)^{\frac{1}{7}} N_3 \|v_3\|_{L^{\frac{1}{\delta}}_t L^{\frac{2}{1-\delta}}_x} \\
 		&\lesssim \Big(\frac{N}{N_1}\Big)^{\frac{4}{7} + \nu} \Big(\frac{N_3}{N_2}\Big)^{\frac{1}{7}} \|F_1\|_{Y_{N_1}} \|F_2\|_{Y_{N_2}} \|v_3\|_{X_{N_3}}.
 	\end{align*}
 	Finally, we infer~\eqref{eq:EstTrilinFFF} via
 	\begin{align*}
 		&N^{\frac{4}{7}+\nu} \| F_1  F_2 F_3 \|_{L^{\frac{2}{1+2\delta}}_t \cL^{\frac{14}{9+\delta}}_r L^2_\theta} 
 		\lesssim N^{\frac{4}{7}+\nu} \|F_1\|_{L^{\frac{2}{1-2\delta}}_t \cL^{\frac{14}{5-\delta}}_r L^{\frac{14}{3-2\delta}}_\theta} \|F_2\|_{L^{\frac{1}{\delta}}_t L^{\frac{7}{2}}_x} \|F_3\|_{L^{\frac{1}{\delta}}_t L^{\frac{7}{\delta}}_x} \\
 		&\lesssim \Big(\frac{N}{N_1} \Big)^{\frac{4}{7} + \nu} N_1^{\frac{4}{7}+\nu} \|F_1\|_{L^{\frac{2}	{1-2\delta}}_t \cL^{\frac{14}{5-\delta}}_r L^{\frac{14}{3-2\delta}}_\theta} N_2^{\frac{1}{7}} \|F_2\|_{L^{\frac{1}{\delta}}_t L^{\frac{7}{2}}_x} \Big(\frac{N_3}{N_2}\Big)^{\frac{1}{7}} \|F_3\|_{L^{\frac{1}{\delta}}_t L^{\frac{28}{1+4\delta}}_x} \\
 		&\lesssim \Big(\frac{N}{N_1} \Big)^{\frac{4}{7} + \nu} \Big(\frac{N_3}{N_2}\Big)^{\frac{1}{7}} \|F_1\|_{Y_{N_1}} \|F_2\|_{Y_{N_2}} \|F_3\|_{Y_{N_3}}. \hfill \qedhere
 	\end{align*}
\end{proof}

%%%%%%%%%%%%%%%%%%%%%%%%%%%%%%%%%%%%%%%%%%%%%%%%%%%%%%%%%%%%%%%%%%%%%%%%%%%%%%%%%%%%%%%%%%%%%%%%%%%%%%%%%%
%%%%%%%%%%%%%%%%%%%%%%%%%%%%%%%%%%%%%%%%%%%%%%%%%%%%%%%%%%%%%%%%%%%%%%%%%%%%%%%%%%%%%%%%%%%%%%%%%%%%%%%%%%
%%%%%%%%%%%%%%%%%%%%%%%%%%%%%%%%%%%%%%%%%%%%%%%%%%%%%%%%%%%%%%%%%%%%%%%%%%%%%%%%%%%%%%%%%%%%%%%%%%%%%%%%%%
The above set of trilinear estimates implies the following estimates for the nonlinearity of the forced cubic NLS in our functional framework. They are a key ingredient in the proofs of the local wellposedness theorem and the conditional scattering result for~\eqref{eq:ForcednlSchrdfandf}.
\begin{cor}
	\label{cor:NonlinearEstimate}
	There is a constant $C > 0$ such that
	\begin{align}
		&\| |v + F|^2 (v + F) \|_{G(I)} \leq C (\|v\|_{X(I)}^3 + \|F\|_{Y(I)}^3), \label{eq:NonlinEstCubic}\\
		&\| |v_1 + F|^2 (v_1 + F) - |v_2 + F|^2 (v_2 + F) \|_{G(I)} \label{eq:NonlinEstDifference}\\
		&\hspace{6em} \leq C \|v_1 - v_2\|_{X(I)} (\|v_1\|_{X(I)}^2 + \|v_2\|_{X(I)}^2 + \|F\|_{Y(I)}^2 ),  \nonumber\\
		&\| |u + w + F|^2(u+w+F) - |u|^2 u\|_{G(I)} \label{eq:NOnlinEstPerturbation}\\
		&\hspace{6em} \leq C (\|F\|_{Y(I)}^3 + \|w\|_{X(I)}^3 + \|F\|_{Y(I)} \|u\|_{X(I)}^2 + \|w\|_{X(I)} \|u\|_{X(I)}^2) \nonumber
	\end{align}
	for all $u,v,v_1,v_2,w \in X(I)$, $F \in Y(I)$, and time intervals $I \subseteq \R$.
\end{cor}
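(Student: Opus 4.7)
The plan is to deduce all three inequalities from Lemma~\ref{lem:TrilinearEstimates} by a Littlewood--Paley decomposition of each factor followed by a Schur-type summation in the dyadic parameters. For \eqref{eq:NonlinEstCubic}, I would first expand $|v+F|^2(v+F) = (v+F)^2 \overline{(v+F)}$ into the eight trilinear expressions $w^{(1)} w^{(2)} w^{(3)}$ (with one factor conjugated, which is immaterial for the norms) in which each $w^{(j)} \in \{v,F\}$. Each such expression is then decomposed dyadically in all three inputs: fixing an output frequency $N$ and frequencies $N_1 \geq N_2 \geq N_3$ for the three inputs ordered by size, only combinations with $N \lesssim N_1$ survive the projector $P_N$.

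To bound the $G_N$-norm of a single localized piece $P_N(P_{N_1}^{(1)} w^{(1)} \cdot P_{N_2}^{(2)} w^{(2)} \cdot P_{N_3}^{(3)} w^{(3)})$, I would pick the splitting in the definition of $\|\cdot\|_{G_N}$ according to whether the highest-frequency factor $w^{(1)}$ is a copy of $v$ or of $F$: if $w^{(1)} = v$ the piece is placed into the $L^1_t L^2_x$-part and controlled by one of \eqref{eq:EstTrilinvvv}--\eqref{eq:EstTrilinvFF}, while if $w^{(1)} = F$ it is placed into the $L^{\frac{2}{1+2\delta}}_t \cL^{\frac{14}{9+\delta}}_r L^2_\theta$-part and controlled by one of \eqref{eq:EstTrilinFvv}--\eqref{eq:EstTrilinFFF}. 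In every case the resulting bound has the schematic form
\begin{align*}
	\|P_N (P_{N_1}^{(1)} w^{(1)} \cdot P_{N_2}^{(2)} w^{(2)} \cdot P_{N_3}^{(3)} w^{(3)})\|_{G_N} \lesssim \Big(\frac{N}{N_1}\Big)^{\alpha} \Big(\frac{N_3}{N_2}\Big)^{\beta} a^{(1)}_{N_1} a^{(2)}_{N_2} a^{(3)}_{N_3}
\end{align*}
with strictly positive exponents $\alpha, \beta$ and dyadic sequences $a^{(j)}_{N_j} \in \{\|P_{N_j}v\|_{X_{N_j}(I)}, \|P_{N_j}F\|_{Y_{N_j}(I)}\}$.

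The third step is to sum these bounds over $N_1, N_2, N_3$ and take the $\ell^2_N$-norm. Since $\beta > 0$, the kernel $(N_3/N_2)^\beta \one_{N_3 \leq N_2}$ satisfies Schur's test on the dyadic grid, so two applications of Cauchy--Schwarz give $\sum_{N_3 \leq N_2 \leq N_1}(N_3/N_2)^\beta a^{(2)}_{N_2} a^{(3)}_{N_3} \lesssim \|a^{(2)}\|_{\ell^2} \|a^{(3)}\|_{\ell^2}$. Since $\alpha > 0$, the outer kernel $(N/N_1)^\alpha \one_{N \lesssim N_1}$ is a convolution in $\log_2 N - \log_2 N_1$ with uniformly bounded $\ell^1$-norm, so Young's inequality in $\ell^2_N$ yields $\|\sum_{N_1 \gtrsim N}(N/N_1)^\alpha a^{(1)}_{N_1}\|_{\ell^2_N} \lesssim \|a^{(1)}\|_{\ell^2}$. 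Combining these, each of the eight types of trilinears contributes at most $\|a^{(1)}\|_{\ell^2}\|a^{(2)}\|_{\ell^2}\|a^{(3)}\|_{\ell^2} = \|v\|_{X(I)}^{n_v} \|F\|_{Y(I)}^{n_F}$ with $n_v + n_F = 3$, and a final use of Young's inequality $a^{n_v} b^{n_F} \lesssim a^3 + b^3$ absorbs the mixed terms into $\|v\|_{X(I)}^3 + \|F\|_{Y(I)}^3$, yielding \eqref{eq:NonlinEstCubic}.

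For \eqref{eq:NonlinEstDifference} I would write the difference as a finite sum of trilinears in which exactly one factor is $v_1 - v_2$ and the remaining two factors are drawn from $\{v_1, v_2, F\}$, and then apply the same argument with $v_1 - v_2$ playing the role of one $v$-factor. For \eqref{eq:NOnlinEstPerturbation} the algebraic expansion of $|u+w+F|^2(u+w+F) - |u|^2 u$ produces only trilinears in $u,w,F$ in which at least one factor lies in $\{w,F\}$; the terms quadratic in $u$ contribute $\|u\|_{X(I)}^2(\|w\|_{X(I)} + \|F\|_{Y(I)})$ directly, while the trilinears with at most one $u$-factor contribute at most $\|u\|_{X(I)}(\|w\|_{X(I)} + \|F\|_{Y(I)})^2 + (\|w\|_{X(I)} + \|F\|_{Y(I)})^3$, and these are absorbed into the right-hand side of \eqref{eq:NOnlinEstPerturbation} by Young's inequality. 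The main technical point is the bookkeeping: verifying for each of the eight possible orderings of $v$-factors and $F$-factors that the appropriate estimate in Lemma~\ref{lem:TrilinearEstimates} applies and that the resulting gains in $(N/N_1)$ and $(N_3/N_2)$ are simultaneously positive. Since Lemma~\ref{lem:TrilinearEstimates} is tailored to exactly these eight cases, no additional estimate is required.
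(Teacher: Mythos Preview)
Your proposal is correct and follows exactly the approach of the paper: dyadic decomposition of each input, application of the eight estimates in Lemma~\ref{lem:TrilinearEstimates} according to which of $v$ or $F$ sits at highest frequency, and then summation via Cauchy--Schwarz/Schur in $(N_2,N_3)$ and Young's inequality in $(N,N_1)$. The paper's own proof is stated in one sentence (``dyadic decomposition and Lemma~\ref{lem:TrilinearEstimates}; the exponential gains \dots\ allow us to close via Cauchy--Schwarz and Young's inequality for series''), and you have simply written out the details that sentence leaves implicit.
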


\begin{proof}
	All three estimates follow by applying dyadic decomposition and Lemma~\ref{lem:TrilinearEstimates}. The exponential gains in~\eqref{eq:EstTrilinvvv}-\eqref{eq:EstTrilinFFF} allow us to close the estimates via Cauchy-Schwarz and Young's inequality for series.
\end{proof}

We next present the local wellposedness theory for the forced cubic NLS~\eqref{eq:ForcednlSchrdfandf} in the functional framework from Subsection~\ref{subsec:FunctionalFramework}. We show that this problem has a unique solution provided the profile of the initial value and the forcing are sufficiently small. Moreover, we give a blow-up criterion in the $X([t_0, T_+))$-norm, where $T_+$ denotes the maximal time of existence of the solution. In the case $T_+ = \infty$, we also prove that finite $X([t_0, T_+))$-norm implies scattering.
\begin{thm}
	\label{thm:LWPForcedEquation}
	Let $I \subseteq \R$ be a time interval, $t_0 \in I$, $v_0 \in \dot{H}^1(\R^4)$, and $F \in Y(\R)$. There exists $\epsilon > 0$ such that
	\begin{align*}
		\|e^{\imu (t - t_0)\Delta} v_0\|_{X(I)} + \|F\|_{Y(I)} \leq \epsilon
	\end{align*}
	implies that there exists a unique solution $v \in C(I, \dot{H}^1(\R^4)) \cap X(I)$ of~\eqref{eq:ForcednlSchrdfandf}. It extends to a unique solution on the maximal interval of existence $(T_{-}, T_+)$. Moreover,
	\begin{enumerate}
		\item if $T_+ < \infty$, then $\|v\|_{X([t_0,T_+))} = \infty$,
		\item if $T_+ = \infty$ and $\|v\|_{X([t_0,T_+))} < \infty$, then the solution $v$ scatters forward in time, i.e. there exists $v_+ \in \dot{H}^1(\R^4)$ with
		\begin{align*}
			\lim_{t \rightarrow \infty} \|v(t) - e^{\imu t \Delta} v_+\|_{\dot{H}^1} = 0.
		\end{align*}
	\end{enumerate}
	The analogous statements are true for $T_{-}$.
\end{thm}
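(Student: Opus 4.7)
The argument is a standard contraction mapping in $X(I)$, followed by extension to a maximal interval and a scattering argument. The key recurring observation is that $\|v\|_{X(J)} + \|F\|_{Y(J)} \to 0$ as the interval $J$ shrinks, provided these norms are finite on a larger reference interval; this follows from dominated convergence in the defining dyadic $\ell^2$-sum (each $X_N$- and $Y_N$-component is built from $L^q_t$-norms with $q < \infty$).

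For existence, on the closed ball $B_{2\ep} = \{v \in X(I) \colon \|v\|_{X(I)} \leq 2\ep\}$ I would consider the Duhamel map
\begin{align*}
\Phi(v)(t) = e^{\imu(t-t_0)\Delta} v_0 \mp \imu \int_{t_0}^t e^{\imu(t-s)\Delta}\, |F+v|^2(F+v)(s)\, \dd s.
\end{align*}
Combining the Strichartz-type bounds~\eqref{eq:StrichartzinXLin}--\eqref{eq:StrichartzinXNonlin} with the cubic estimate~\eqref{eq:NonlinEstCubic} of Corollary~\ref{cor:NonlinearEstimate} yields
\begin{align*}
\|\Phi(v)\|_{X(I)} \leq \|e^{\imu(t-t_0)\Delta} v_0\|_{X(I)} + C\bigl(\|v\|_{X(I)}^3 + \|F\|_{Y(I)}^3\bigr) \leq \ep + 9 C \ep^3,
\end{align*}
which is at most $2\ep$ for $\ep$ small enough; the Lipschitz bound~\eqref{eq:NonlinEstDifference} analogously furnishes contractivity on $B_{2\ep}$. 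The $L^\infty_t\dot H^1$-components of~\eqref{eq:StrichartzinXLin}--\eqref{eq:StrichartzinXNonlin} ensure that the unique fixed point lies in $C(I, \dot H^1)$, and standard gluing produces a maximal open interval of existence $(T_-, T_+) \ni t_0$.

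For the blow-up criterion, suppose $T_+ < \infty$ and $\|v\|_{X([t_0, T_+))} < \infty$. Unitarity of $e^{-\imu t\Delta}$ on $\dot H^1$ together with~\eqref{eq:StrichartzinXNonlin} and~\eqref{eq:NonlinEstCubic} gives, for $t_0 < t_1 < t_2 < T_+$,
\begin{align*}
\|e^{-\imu t_2 \Delta} v(t_2) - e^{-\imu t_1 \Delta} v(t_1)\|_{\dot H^1} \lesssim \|v\|_{X([t_1, t_2])}^3 + \|F\|_{Y([t_1, t_2])}^3,
\end{align*}
and both summands vanish as $t_1, t_2 \uparrow T_+$ by the continuity observation above. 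Hence $e^{-\imu t \Delta}v(t)$ is Cauchy in $\dot H^1$, so $v(t) \to v(T_+) \in \dot H^1$ as $t \uparrow T_+$. Since $e^{\imu t \Delta} v(T_+) \in X(\R)$ by~\eqref{eq:StrichartzinXLin} and $F \in Y(\R)$, the same observation gives $\|e^{\imu(t-T_+)\Delta} v(T_+)\|_{X([T_+, T_+ + \eta])} + \|F\|_{Y([T_+, T_+ + \eta])} \to 0$ as $\eta \to 0$, and the contraction argument anchored at $T_+$ extends the solution past $T_+$, contradicting maximality.

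If instead $T_+ = \infty$ and $\|v\|_{X([t_0, \infty))} < \infty$, the exact same Cauchy estimate shows that $e^{-\imu t \Delta} v(t)$ converges in $\dot H^1$ to some $v_+$ as $t \to \infty$, which is the asserted scattering state. The negative-time statements follow verbatim by time reversal. I do not foresee any substantive obstacle; the only point requiring care is the absolute continuity of the $X$- and $Y$-norms on shrinking intervals, which is precisely the dominated convergence observation stated at the outset.
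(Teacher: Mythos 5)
Your proposal is correct and follows essentially the same route as the paper: a contraction on the ball $\{\|v\|_{X(I)}\leq 2\epsilon\}$ via \eqref{eq:StrichartzinXLin}--\eqref{eq:StrichartzinXNonlin} and Corollary~\ref{cor:NonlinearEstimate}, then the blow-up criterion and scattering from the smallness of $\|v\|_{X(J)}+\|F\|_{Y(J)}$ on shrinking intervals (dominated convergence in the dyadic sums, as in the paper). The only cosmetic difference is that you extend past $T_+$ by first constructing the limit $v(T_+)\in\dot H^1$ and re-solving at $T_+$, while the paper anchors the small-data argument at an interior time $\overline{t}<T_+$ using the identity $e^{\imu(t-t_n)\Delta}v(t_n)=v(t)\pm\imu\int_{t_n}^t e^{\imu(t-s)\Delta}|F+v|^2(F+v)\,\dd s$; both are standard and equivalent here.
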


\begin{proof}
	Set
	\begin{align*}
		B_\epsilon = \{w \in X(I) \colon \|w\|_{X(I)} \leq 2 \epsilon\}
	\end{align*}
	equipped with the metric $d(w_1, w_2) = \|w_1 - w_2\|_{X(I)}$, where $\epsilon > 0$ will be fixed below. We define the fixed point operator
	\begin{align*}
		 \Phi(w)(t) = e^{\imu (t-t_0) \Delta}v_0 \mp \imu \int_{t_0}^t e^{\imu (t-s) \Delta} |F+w|^2(F+w)(s) \dd s
	\end{align*}
	for all $w \in B_\epsilon$. Using~\eqref{eq:StrichartzinXNonlin} and Corollary~\ref{cor:NonlinearEstimate}, we infer
	\begin{align}
	\label{eq:FixedPointSelfMapping}
		\|\Phi(w)\|_{X(I)} &\leq \|e^{\imu (t-t_0) \Delta}v_0\|_{X(I)} + \| |F+w|^2(F+w)\|_{G(I)} \nonumber\\
				&\leq \epsilon + C(\|F\|_{Y(I)}^3 + \|w\|_{X(I)}^3) \leq \epsilon + C \epsilon^3
	\end{align}
	and
	\begin{align}
	\label{eq:FixedPointContraction}
		&\|\Phi(w_1) - \Phi(w_2)\|_{X(I)} \leq C \| |F+w_1|^2 (F+w_1) - |F+w_2|^2 (F + w_2)\|_{G(I)} \nonumber\\
		&\leq C \|w_1 - w_2 \|_{X(I)}(\|w_1\|_{X(I)}^2 + \|w_2\|_{X(I)}^2 + \|F\|_{Y(I)}^2) 
		\leq C \epsilon^2 \|w_1 - w_2\|_{X(I)}
	\end{align}
	for all $w,w_1,w_2 \in B_\epsilon$. Fixing the maximum $C$ of the generic constants on the right-hand sides of~\eqref{eq:FixedPointSelfMapping} and~\eqref{eq:FixedPointContraction}, we set $\epsilon = (\frac{1}{2C})^{\frac{1}{2}}$. Consequently,
	\begin{align*}
		\|\Phi(w)\|_{X(I)} \leq 2 \epsilon, \qquad \|\Phi(w_1) - \Phi(w_2)\|_{X(I)} \leq \frac{1}{2} \|w_1 - w_2\|_{X(I)}
	\end{align*}
	for all $w, w_1, w_2 \in B_\epsilon$, i.e. $\Phi$ is a contractive self-mapping on the complete metric space $B_\epsilon$. The Banach Fixed Point theorem thus yields a unique solution $v$ of~\eqref{eq:ForcednlSchrdfandf} in $B_\epsilon$. Since $\Phi(v) \in C(I, \dot{H}^1(\R^4))$ by Strichartz estimates, we also have $v \in C(I, \dot{H}^1(\R^4))$. Uniqueness without size restriction follows from standard arguments and~\eqref{eq:FixedPointContraction}.
	
	We define the maximal existence times
	\begin{align*}
		 T_+ &= \sup\{T > t_0 \colon \eqref{eq:ForcednlSchrdfandf} \text{ has a solution on } [t_0,T] \}, \\
		 T_{-} &= \inf\{T < t_0 \colon \eqref{eq:ForcednlSchrdfandf} \text{ has a solution on } [T,t_0] \}.
	\end{align*}
	By standard arguments we can extend $v$ to a unique maximal solution on $(T_{-}, T_+)$.
	
	We now turn to the proof of the blow-up condition. Let $T_+ < \infty$. Assume that $\|v\|_{X([t_0, T_+))} < \infty$. Take a sequence $(t_n)_{n \in \N}$ in $[t_0, T_+)$ with $t_n \rightarrow T_+$ as $n \rightarrow \infty$. Note that
	\begin{align*}
		e^{\imu (t - t_n) \Delta} v(t_n) &= e^{\imu t \Delta} v_0 \mp \imu \int_{t_0}^{t_n} e^{\imu (t-s) \Delta} |F+v|^2 (F+v)(s) \dd s \\
		&= v(t) \pm \imu \int_{t_n}^{t} e^{\imu (t-s) \Delta} |F+v|^2 (F+v)(s) \dd s.
	\end{align*}
	We thus obtain from~\eqref{eq:StrichartzinXNonlin}, Corollary~\ref{cor:NonlinearEstimate}, and the dominated convergence theorem
	\begin{align*}
		\|e^{\imu (t - t_n) \Delta} v(t_n)\|_{X([t_n, T_+))} &\leq \|v\|_{X([t_n, T_+))} + \||F+v|^2(F+v)\|_{G([t_n, T_+))} \\
		&\leq \|v\|_{X([t_n, T_+))} + C(\|F\|_{Y([t_n,T_+))}^3 + \|v\|_{X([t_n, T_+))}^3) \longrightarrow 0
	\end{align*}
	as $n \rightarrow \infty$. In particular, we find a time $\overline{t} \in [t_0, T_+)$ such that
	\begin{align*}
		\|e^{\imu (t - \overline{t}) \Delta} v(\overline{t})\|_{X([\overline{t}, T_+))} + \|F\|_{Y([\overline{t},T_+))} \leq \frac{\epsilon}{2}.
	\end{align*}
	The dominated convergence theorem therefore yields $\eta > 0$ with
	\begin{align*}
		\|e^{\imu (t - \overline{t}) \Delta} v(\overline{t})\|_{X([\overline{t}, T_+ +\eta))} + \|F\|_{Y([\overline{t},T_+ + \eta))} \leq \epsilon.
	\end{align*}
	Consequently, we can extend $v$ to a solution on $[t_0, T_+ + \eta)$, which contradicts the definition of $T_+$. We conclude that $\|v\|_{X([t_0,T_+))} = \infty$.
	
	It remains to prove the scattering part. Let $T_+ = \infty$ and $\|v\|_{X([t_0,T_+))} < \infty$. Employing~Lemma~\ref{lem:StrichartzEstimates}, Lemma~\ref{lem:StrichartzEstimatesAv}, and Corollary~\ref{cor:NonlinearEstimate}, we infer that
	\begin{align*}
		v_+ = e^{-\imu t_0 \Delta} v_0 \mp \imu \int_{t_0}^\infty e^{-\imu s \Delta} |F+v|^2 (F+v)(s) \dd s
	\end{align*}
	belongs to $\dot{H}^1(\R^4)$. Then
	\begin{align*}
		\|v(t) - e^{\imu t \Delta} v_+\|_{\dot{H}^1} &\leq \Big\| \int_t^\infty e^{\imu(t-s)\Delta} |F+v|^2 (F+v)(s) \dd s \Big\|_{L^\infty((t,\infty),\dot{H}^1)} \\
		&\lesssim \|F\|_{Y((t,\infty))}^3 + \|v\|_{X((t,\infty))}^3 \longrightarrow 0
	\end{align*}
	as $t \rightarrow \infty$ by the dominated convergence theorem.
	
	The assertions concerning $T_{-}$ are proven analogously.
\end{proof}

Theorem~\ref{thm:locwp} now immediately follows from the local wellposedness theory of the forced cubic NLS.

\noindent
\emph{Proof of Theorem~\ref{thm:locwp}:} Fix $0 < \delta \ll 1$ such that $\frac{1}{7} + 7 \delta \leq s$ and set $\nu = \frac{16}{7}\delta$. Note that $u$ is a solution of~\eqref{eq:nlSchrdfandf} if and only if the nonlinear part $v(t) := u(t) - e^{\imu t \Delta} f^\om$ is a solution of~\eqref{eq:ForcednlSchrdfandf} with $F(t) = e^{\imu t \Delta} f^\om$ and $v_0 = 0$.

We next observe that
\begin{align*}
	\|F\|_{Y(\R)} &\sim \|\langle \nabla \rangle^{4\delta} F\|_{L^2_t \dot{B}^{\frac{4}{7}+\nu}_{(\frac{14}{5-\delta},\frac{4}{\delta}),2}} + \|\langle \nabla \rangle^{4\delta} F\|_{L^2_t \dot{B}^{\frac{4}{7}+\nu}_{(28,\frac{4}{\delta}),2}} + \| \langle \nabla \rangle^{\frac{1}{7}} F\|_{\dot{B}^{0}_{\frac{1}{\delta},\frac{14}{5-\delta},2}} \\
	&\qquad + \| \langle \nabla \rangle^{\frac{1}{7}} F\|_{\dot{B}^{0}_{\frac{1}{\delta},28,2}}.
\end{align*}
Applying Corollary~\ref{cor:RandomizationInY}~\ref{it:ProbabilisticEstimate1om} with regularity parameters $\frac{1}{7} + \frac{18}{7} \delta$ and $4\delta$, we obtain
\begin{align*}
	&\|\langle \nabla \rangle^{4\delta} e^{\imu t \Delta} f^\om\|_{L^2_t \dot{B}^{\frac{4}{7}+\nu}_{(\frac{14}{5-\delta},\frac{4}{\delta}),2}} + \|\langle \nabla \rangle^{4\delta} e^{\imu t \Delta} f^\om\|_{L^2_t \dot{B}^{\frac{4}{7}+\nu}_{(28,\frac{4}{\delta}),2}}   \\
	&\lesssim \sqrt{\beta} \| \langle \nabla \rangle^{4 \delta} f \|_{\dot{H}^{\frac{1}{7}+\frac{18}{7}\delta}} \lesssim \sqrt{\beta} \| f \|_{H^{\frac{1}{7}+7\delta}} \lesssim \sqrt{\beta} \|f\|_{H^s},
\end{align*}
while the same corollary with regularity parameters $0$ and $\frac{1}{7}$ yields
\begin{align*}
	\| \langle \nabla \rangle^{\frac{1}{7}} e^{\imu t \Delta} f^\om\|_{\dot{B}^{0}_{\frac{1}{\delta},\frac{14}{5-\delta},2}} 
 + \| \langle \nabla \rangle^{\frac{1}{7}} e^{\imu t \Delta} f^\om\|_{\dot{B}^{0}_{\frac{1}{\delta},28,2}} \lesssim \sqrt{\beta} \| \langle \nabla \rangle^{\frac{1}{7}} f \|_{L^2} \lesssim \sqrt{\beta} \|f\|_{H^s}.
\end{align*}
 Consequently, $e^{\imu t \Delta} f^\om$ belongs to $Y(\R)$ for almost all $\om \in \Omega$. The assertion of Theorem~\ref{thm:locwp} thus follows from Theorem~\ref{thm:LWPForcedEquation}. \hfill $\qed$

%%%%%%%%%%%%%%%%%%%%%%%%%%%%%%%%%%%%%%%%%%%%%%%%%%%%%%%%%%%%%%%%%%%%%%%%%%%%%%%%%%%%%%%%%%%%%%%%%%%%%%%%%%
%%%%%%%%%%%%%%%%%%%%%%%%%%%%%%%%%%%%%%%%%%%%%%%%%%%%%%%%%%%%%%%%%%%%%%%%%%%%%%%%%%%%%%%%%%%%%%%%%%%%%%%%%%
%%%%%%%%%%%%%%%%%%%%%%%%%%%%%%%%%%%%%%%%%%%%%%%%%%%%%%%%%%%%%%%%%%%%%%%%%%%%%%%%%%%%%%%%%%%%%%%%%%%%%%%%%%
\section{Conditional scattering}
\label{sec:CondScattering}
We now turn our attention to the long time behavior. Contrary to the local theory, the long time behavior of the cubic NLS depends on the sign in front of the nonlinearity. From now on, we only consider the defocusing cubic NLS
\begin{equation}
	\label{eq:CubicNLS}
	\begin{aligned}
	\imu \partial_t u + \Delta u &= |u|^2 u \qquad \text{on } \R \times \R^4, \\
		u(t_0) &= u_0,
	\end{aligned}
\end{equation}
and correspondingly
\begin{equation}
	\label{eq:ForcedCubicNLS}
	\begin{aligned}
	\imu \partial_t v + \Delta v &= |F+v|^2 (F+v) \qquad \text{on } \R \times \R^4, \\
		v(t_0) &= v_0.
	\end{aligned}
\end{equation}

We now proceed as in~\cite{KMV2019} and~\cite{DLM2019} and develop a perturbation theory for~\eqref{eq:ForcedCubicNLS}, which eventually yields the conditional scattering result. We start with a lemma on short-time perturbations.

\begin{lem}
	\label{lem:ShortTimePerturbation}
	Let $I \subseteq \R$ be a time interval and $t_0 \in I$. Then there exist $\epsilon_0, \zeta_0, \eta_0 > 0$ with the following property. If $u_0, v_0 \in \dot{H}^1(\R^4)$ satisfy
	\begin{align*}
		\|u_0 - v_0\|_{\dot{H}^1(\R^4)} \leq \epsilon_0,
	\end{align*}		
	if $u$ is a solution of~\eqref{eq:CubicNLS} on $I$ with $u(t_0) = u_0$ and 
	\begin{align*}
		\|u\|_{X(I)} \leq \zeta
	\end{align*}
	for some $\zeta \in (0,\zeta_0)$, and if $F \in Y(I)$ satisfies
	\begin{align*}
		\|F\|_{Y(I)} \leq \eta
	\end{align*}
	for some $\eta \in (0,\eta_0)$, then there is a unique solution $v \in C(I, \dot{H}^1(\R^4)) \cap X(I)$ of~\eqref{eq:ForcedCubicNLS} and a constant $C \geq 1$ such that
	\begin{equation}
	\label{eq:EstShortTimePerturbation}
		\|u - v\|_{L^\infty_I \dot{H}^1_x} + \|u - v\|_{X(I)} \leq C(\|u_0 - v_0\|_{\dot{H}^1} + \eta).
	\end{equation}
\end{lem}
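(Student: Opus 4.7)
The plan is to construct $v$ as $v = u + w$ and solve for the perturbation $w$ via a Banach fixed point argument in $X(I)$. Since $u$ solves the unforced equation~\eqref{eq:CubicNLS} and $v$ should solve~\eqref{eq:ForcedCubicNLS}, the difference $w$ must satisfy
\begin{equation*}
\imu \partial_t w + \Delta w = |u+w+F|^2(u+w+F) - |u|^2 u, \qquad w(t_0) = v_0 - u_0,
\end{equation*}
or equivalently, by Duhamel's formula, $w = \Phi(w)$ with
\begin{equation*}
\Phi(w)(t) := e^{\imu (t-t_0)\Delta}(v_0 - u_0) - \imu \int_{t_0}^t e^{\imu (t-s)\Delta}\bigl[|u+w+F|^2(u+w+F) - |u|^2 u\bigr](s)\,\dd s.
\end{equation*}
Working with $w$ rather than $v$ directly is essential here, because $v_0$ itself is not assumed small and hence Theorem~\ref{thm:LWPForcedEquation} does not apply to $v$ with data $v_0$ on all of $I$; only the difference $v_0 - u_0$ is small, and this smallness enters cleanly as the data for $w$.

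Combining the Strichartz bounds~\eqref{eq:StrichartzinXLin} and~\eqref{eq:StrichartzinXNonlin} with the perturbation estimate~\eqref{eq:NOnlinEstPerturbation} of Corollary~\ref{cor:NonlinearEstimate}, I obtain for any $w \in X(I)$
\begin{equation*}
\|\Phi(w)\|_{L^\infty_I \dot H^1_x} + \|\Phi(w)\|_{X(I)} \leq C\epsilon_0 + C\bigl(\eta^3 + \|w\|_{X(I)}^3 + \eta\zeta^2 + \|w\|_{X(I)}\zeta^2\bigr).
\end{equation*}
Applying the difference estimate~\eqref{eq:NonlinEstDifference} with $u + w_1$ and $u + w_2$ in place of $v_1,v_2$, and using $\|u + w_j\|_{X(I)}^2 \leq 2\zeta^2 + 2\|w_j\|_{X(I)}^2$, analogously yields
\begin{equation*}
\|\Phi(w_1) - \Phi(w_2)\|_{X(I)} \leq C\|w_1 - w_2\|_{X(I)}\bigl(\zeta^2 + \|w_1\|_{X(I)}^2 + \|w_2\|_{X(I)}^2 + \eta^2\bigr).
\end{equation*}

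Setting $R := 4C(\epsilon_0 + \eta)$ and choosing $\epsilon_0, \eta_0, \zeta_0$ so small that $C(R^2 + \zeta_0^2 + \eta_0^2) \leq \tfrac{1}{4}$ and $CR^2 + C\zeta_0^2 \leq \tfrac{1}{4}$, the map $\Phi$ is both a self-mapping and a strict contraction on the closed ball $B_R = \{w \in X(I) : \|w\|_{X(I)} \leq R\}$. The Banach fixed point theorem then provides a unique $w \in B_R$ with $w = \Phi(w)$, and $v := u + w$ is the desired solution of~\eqref{eq:ForcedCubicNLS}. Continuity $v \in C(I, \dot H^1)$ is inherited from the free propagator and the Duhamel term via Strichartz, and~\eqref{eq:EstShortTimePerturbation} follows at once from the self-mapping bound evaluated at the fixed point, together with $R \leq 4C(\epsilon_0 + \eta)$. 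Uniqueness in $C(I, \dot H^1)\cap X(I)$ without any size restriction is then standard: by continuity of the $X$-norm in the time interval, two solutions with identical data can be compared via~\eqref{eq:NonlinEstDifference} on a sufficiently small subinterval around $t_0$, on which they must coincide, and this argument is iterated across $I$. The only real point of care in executing the plan is the bookkeeping needed to choose all three smallness parameters $\epsilon_0, \zeta_0, \eta_0$ compatibly so that both the self-mapping and contraction inequalities close simultaneously; once the estimates of Corollary~\ref{cor:NonlinearEstimate} are in hand, no further genuine difficulty arises.
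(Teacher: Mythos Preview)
Your proof is correct and follows essentially the same route as the paper: both set $w = v - u$, derive the Duhamel equation for $w$, and close using the Strichartz bounds~\eqref{eq:StrichartzinXLin}, \eqref{eq:StrichartzinXNonlin} together with the trilinear estimates of Corollary~\ref{cor:NonlinearEstimate}. The only difference is one of presentation: the paper reduces to an a priori estimate (invoking the local theory for existence) and then runs a continuity/bootstrap argument, whereas you construct $w$ directly via a contraction mapping; the estimates, the smallness parameters, and the substance of the argument are identical.
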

\begin{proof}
	Due to the local wellposedness theory, it is enough to establish~\eqref{eq:EstShortTimePerturbation} as an a priori estimate. Set $w = v - u$. Then the function $w$ satisfies
	\begin{equation}
		\begin{aligned}
		\imu \partial_t w + \Delta w &= |F+v|^2 (F+v) - |u|^2 u, \\
		w(t_0) &= v_0 - u_0.
	\end{aligned}
	\end{equation}
	Hence, \eqref{eq:StrichartzinXLin}, \eqref{eq:StrichartzinXNonlin}, and Corollary~\ref{cor:NonlinearEstimate} imply
	\begin{align*}
		&\|w\|_{L^\infty_I \dot{H}^1} + \|w\|_{X(I)} \lesssim \|v_0 - u_0\|_{\dot{H}^1} + \||F + u + w|^2(F+u+w) - |u|^2 u\|_{G(I)} \\
		&\lesssim \|v_0 - u_0\|_{\dot{H}^1} + \|F\|_{Y(I)}^3 + \|w\|_{X(I)}^3 + \|F\|_{Y(I)} \|u\|_{X(I)}^2 + \|w\|_{X(I)} \|u\|_{X(I)}^2 \\
		&\lesssim \|v_0 - u_0\|_{\dot{H}^1} + \eta^3 + \|w\|_{X(I)}^3 + \eta \zeta^2 + \zeta^2 \|w\|_{X(I)}.
	\end{align*}
	For $\zeta_0 > 0$ small enough, we thus obtain
	\begin{align*}
		\|w\|_{L^\infty_I \dot{H}^1} + \|w\|_{X(I)} \lesssim \|v_0 - u_0\|_{\dot{H}^1} + \eta + \eta^3 + \|w\|_{X(I)}^3.
	\end{align*}
	If $\epsilon_0 > 0$ and $\eta_0 > 0$ are chosen small enough, a standard continuity argument yields the assertion.
\end{proof}

Due to the time divisibility of the $X(I)$-norm, we can derive a long-time perturbation result from the previous lemma.
\begin{lem}
	\label{lem:LongTimePerturbation}
	Let $I \subseteq \R$ be a time interval, $t_0 \in I$, and  $v_0 \in \dot{H}^1(\R^4)$. Take $A > 0$.  Suppose that the solution $u$ of~\eqref{eq:CubicNLS} with $u(t_0) = v_0$ satisfies
	\begin{align*}
		\|u\|_{X(I)} \leq A.
	\end{align*}
	Then there exists $\eta = \eta(A) > 0$ and $C = C(A) > 0$ such that for every $F \in Y(I)$ satisfying
	\begin{align*}
		\|F\|_{Y(I)} \leq \eta
	\end{align*}
	there is a unique solution $v \in C(I, \dot{H}^1(\R^4)) \cap X(I)$ of~\eqref{eq:ForcedCubicNLS} with
	\begin{equation}
		\label{eq:EstLongTimeStab}
		\|v\|_{X(I)} \leq C(A).
	\end{equation}
\end{lem}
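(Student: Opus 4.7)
\medskip
\noindent\textbf{Proof plan.} The strategy is the standard bootstrap from short-time perturbation via partitioning the interval $I$ into finitely many subintervals on which $\|u\|_{X(I_j)}$ is below the threshold $\zeta_0$ of Lemma~\ref{lem:ShortTimePerturbation}, and then iterating Lemma~\ref{lem:ShortTimePerturbation} along these subintervals. First I would use that every ingredient of the $X_N(I)$-norm is an $L^q_t L^p_x$-norm with $q<\infty$ and that $\|u\|_{X(I)}^2=\sum_N \|P_N u\|_{X_N(I)}^2\leq A^2$. By dominated convergence the map $I'\mapsto \|u\|_{X(I')}$ is continuous in the endpoints of $I'$ and vanishes as $|I'|\to 0$. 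Combining this with the $\ell^2_N$ square-summability one can choose (say by a greedy argument, stopping each interval when $\|u\|_{X(I_j)}$ reaches $\zeta_0/2$) a partition $t_0<t_1<\ldots<t_J$ of $I$ (handling the past of $t_0$ symmetrically) such that
\begin{equation*}
	\|u\|_{X(I_j)} \leq \tfrac{\zeta_0}{2}, \qquad I_j = [t_{j-1},t_j], \quad j=1,\ldots,J,
\end{equation*}
with $J = J(A) \in \N$ depending only on $A$ and $\zeta_0$.

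Next I would run the induction. Set $v(t_0)=v_0=u(t_0)$, so that $\|u(t_0)-v(t_0)\|_{\dot{H}^1}=0$. Suppose inductively that $v$ has been constructed on $[t_0,t_{j-1}]$ with
\begin{equation*}
	\|u(t_{j-1})-v(t_{j-1})\|_{\dot{H}^1} \leq \epsilon_{j-1}
\end{equation*}
for some $\epsilon_{j-1}\leq \epsilon_0$. Because $\|F\|_{Y(I_j)}\leq \|F\|_{Y(I)}\leq \eta$ (the $Y$-norm is also built from $L^q_t$-norms with $q<\infty$ and is therefore monotone in $I$), Lemma~\ref{lem:ShortTimePerturbation} applies on $I_j$ with data $u(t_{j-1})$ and $v(t_{j-1})$ provided $\eta\leq\eta_0$, producing a solution $v$ of~\eqref{eq:ForcedCubicNLS} on $I_j$ satisfying
\begin{equation*}
	\|u-v\|_{L^\infty_{I_j}\dot{H}^1_x} + \|u-v\|_{X(I_j)} \leq C\bigl(\epsilon_{j-1} + \eta\bigr).
\end{equation*}
In particular $\epsilon_j := C(\epsilon_{j-1}+\eta)$ controls the initial-data error at $t_j$, and unfolding the recursion yields $\epsilon_j \leq (C^j + C^{j-1}+\cdots+C)\eta \leq 2C^{J}\eta$. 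Choose $\eta = \eta(A) > 0$ small enough that $2C^J \eta \leq \epsilon_0$; then the induction closes on all $J$ subintervals.

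Finally, to get the global bound~\eqref{eq:EstLongTimeStab}, write $v = u + (v-u)$ and use the triangle inequality together with time-divisibility of the $L^q_t$-norms appearing in $X_N$: for each $N$,
\begin{equation*}
	\|P_N(v-u)\|_{X_N(I)} \lesssim \Big(\sum_{j=1}^J \|P_N(v-u)\|_{X_N(I_j)}^2\Big)^{1/2},
\end{equation*}
(with a constant depending only on $J$ through the embedding $\ell^2\hookrightarrow\ell^{1/\delta}$ for the $L^{1/\delta}_t$-component), so that taking $\ell^2_N$ gives $\|v-u\|_{X(I)}^2 \lesssim \sum_{j=1}^J \|v-u\|_{X(I_j)}^2 \leq J\,(2C^J\eta)^2$. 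Hence $\|v\|_{X(I)} \leq A + C'(A) =: C(A)$, as required, and the solution extends continuously in $\dot{H}^1$ across all $J$ subintervals by the $L^\infty_t \dot{H}^1$-bounds. The main obstacle to verify carefully is the partitioning step: because the $X(I)$-norm is not directly additive but only $\ell^2_N$-summable over frequencies, one must check that finitely many subintervals suffice, which is ensured by the continuity of $I'\mapsto \|u\|_{X(I')}$ and a standard covering argument. The rest is bookkeeping in the geometric iteration.
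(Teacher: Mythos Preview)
Your proposal is correct and follows essentially the same approach as the paper: partition $I$ into $J=J(A)$ subintervals on which $\|u\|_{X(I_j)}$ is below the short-time threshold, iterate Lemma~\ref{lem:ShortTimePerturbation} with geometrically growing error bounds, and choose $\eta$ small enough (of order $\epsilon_0 C^{-J}$) so the iteration closes. The paper's proof is slightly terser---it writes the recursion as $(2C_0)^j\eta$ directly and does not spell out the partitioning or time-divisibility arguments---but the structure is identical.
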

\begin{proof}
	It is again enough to prove~\eqref{eq:EstLongTimeStab} as an a priori estimate because of the local wellposedness theory. We further assume without loss of generality that $t_0 = \inf I$.
	
	Let $\zeta_0,\epsilon_0, \eta_0$ and $C_0$ be the parameters and the constant from Lemma~\ref{lem:ShortTimePerturbation}, respectively, and fix $\zeta \in (0,\zeta_0)$. Then there exist $n = n(A)$ and $t_0 < t_1 < \ldots < t_n = T_+$ such that
	\begin{equation}
		\label{eq:EstuPartitionLongTimeStab}
		\|u\|_{X(I_j)} \leq \zeta
	\end{equation}
	for all $j \in \{1, \ldots, n\}$, where $I_j = (t_{j-1}, t_j)$. We define $\eta = \eta(A)$ by
	\begin{align*}
		 \eta = \min\{\epsilon_0 (2C_0)^{-n}, \eta_0\}.
	\end{align*}
	We show inductively that
	\begin{equation}
		\label{eq:EstDiffInductionHypo}
			\|v-u\|_{L^\infty(I_j, \dot{H}^1)} + \|v-u\|_{X(I_j)} \leq (2C_0)^j \eta
	\end{equation}
	for all $j \in \{1, \ldots, n\}$.
	
	For $j = 1$ this is an immediate consequence of Lemma~\ref{lem:ShortTimePerturbation}. Now assume that we have shown~\eqref{eq:EstDiffInductionHypo} for a number $j \in \{1, \ldots, n-1\}$. Since $(2C_0)^j \eta \leq \epsilon_0$, we can apply Lemma~\ref{lem:ShortTimePerturbation} on $I_{j+1}$ which yields
	\begin{align*}
		\|v - u \|_{L^\infty(I_{j+1}, \dot{H}^1)} + \|v - u \|_{X(I_{j+1})} &\leq C_0 (\|v(t_j) - u(t_j)\|_{\dot{H}^1} + \eta) \\
		&\leq C_0 ((2C_0)^j \eta + \eta) \leq (2C_0)^{j+1} \eta,
	\end{align*}
	i.e.~\eqref{eq:EstDiffInductionHypo} for $j+1$.
	Setting $C = (2C_0)^n \eta + A$ thus finishes the proof.
\end{proof}

We now come to the conditional scattering result for~\eqref{eq:ForcedCubicNLS}. It states that given $F \in Y(\R)$, if the energy of the solution $v$ remains bounded on the maximal interval of existence, then $v$ exists globally and scatters. The key ingredient in the proof is that the deterministic theory for~\eqref{eq:CubicNLS} with initial values in $\dot{H}^1(\R^4)$ provides a bound of a global scattering norm in terms of the energy of the initial value. In view of our perturbation theory, we can derive the conditional scattering result by comparing solutions of the forced equation with the global scattering ones of~\eqref{eq:CubicNLS} with $\dot{H}^1(\R^4)$-initial data.
\begin{prop}
	\label{prop:ConditionalScattering}
	Let $v_0 \in \dot{H}^1(\R^4)$ and $F \in Y([t_0,\infty))$. Let $v$ denote the solution of~\eqref{eq:ForcedCubicNLS} on its maximal interval of existence $[t_0, T_+)$. 
	 Assume that
	\begin{align*}
		A = \sup_{t \in [t_0, T_+)} E(v(t)) < \infty.
	\end{align*}
	We then have $\|v\|_{X([t_0, T_+))} < \infty$. In particular, $T_+ = \infty$ and the solution scatters forward in time, i.e. there is $v_+ \in \dot{H}^1(\R^4)$ such that
	\begin{align*}
		\lim_{t \rightarrow \infty} \|v(t) - e^{\imu t \Delta} v_+\|_{\dot{H}^1} = 0.
	\end{align*}	
	In backward time the analogous statement is true.
\end{prop}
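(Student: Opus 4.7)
The strategy is to compare $v$ with the global scattering solutions of the unforced defocusing cubic NLS produced by the deterministic theory of~\cite{RV2007} and to control the difference via the long-time perturbation result of Lemma~\ref{lem:LongTimePerturbation}. The hypothesis $E(v(t)) \leq A$ delivers the uniform bound $\|v(t)\|_{\dot{H}^1} \leq \sqrt{2A}$ on $[t_0, T_+)$, so every time we re-initialize the unforced flow at an intermediate value $v(\tau)$, we remain inside a fixed ball of $\dot{H}^1(\R^4)$.

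First, for every $w_0 \in \dot{H}^1(\R^4)$ with $E(w_0) \leq A$ and every $t_\ast \in \R$, the energy-critical theory in $\R^4$ furnishes a unique global solution $u$ of $\imu \partial_t u + \Delta u = |u|^2 u$ with $u(t_\ast) = w_0$ and $\|\nabla u\|_{L^3_t L^3_x} \leq L(A)$. A standard bootstrap---partition $\R$ into finitely many sub-intervals on which the scattering norm is small and combine~\eqref{eq:StrichartzinXLin}--\eqref{eq:StrichartzinXNonlin} with Corollary~\ref{cor:NonlinearEstimate}---upgrades this to a uniform $X$-bound $\|u\|_{X(\R)} \leq M(A)$. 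Let $\eta = \eta(M(A))$ and $C_\ast = C(M(A))$ be the smallness threshold and stability constant provided by Lemma~\ref{lem:LongTimePerturbation}. Because every temporal integrability exponent in the definition of the $Y$-norm is finite, the map $I \mapsto \|F\|_{Y(I)}$ is absolutely continuous, and $[t_0, T_+)$ can be partitioned into finitely many consecutive sub-intervals $I_1, \ldots, I_n$ with $\|F\|_{Y(I_k)} \leq \eta$ for every $k$; the number $n$ depends only on $A$ and $\|F\|_{Y([t_0, \infty))}$.

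Now iterate on the $I_k = [t_{k-1}, t_k)$. At $t_{k-1}$, the function $v(t_{k-1}) \in \dot{H}^1(\R^4)$ has $E(v(t_{k-1})) \leq A$, so the unforced solution $u_k$ with $u_k(t_{k-1}) = v(t_{k-1})$ satisfies $\|u_k\|_{X(\R)} \leq M(A)$, and Lemma~\ref{lem:LongTimePerturbation} produces a unique element of $C(I_k, \dot{H}^1(\R^4)) \cap X(I_k)$ solving~\eqref{eq:ForcedCubicNLS} with initial datum $v(t_{k-1})$ and $X(I_k)$-norm bounded by $C_\ast$. By uniqueness in this class---obtained from Corollary~\ref{cor:NonlinearEstimate} first on compact sub-intervals $[t_{k-1}, \tau] \subset I_k$ and then passed to $\tau \uparrow t_k$ using the continuity of the $L^q_t$-components of the $X$-norm---this element coincides with $v|_{I_k}$. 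Summing the bounds over $k$ gives $\|v\|_{X([t_0, T_+))}^2 \lesssim n\, C_\ast^2 < \infty$, whence the blow-up criterion in Theorem~\ref{thm:LWPForcedEquation} forces $T_+ = \infty$, and its scattering criterion delivers $v_+ \in \dot{H}^1(\R^4)$ with $\|v(t) - e^{\imu t \Delta} v_+\|_{\dot{H}^1} \to 0$ as $t \to \infty$. The backward-in-time case is identical after time reversal. The main subtlety throughout is the promotion of the deterministic $L^3_t L^3_x$-bound into a uniform $X(\R)$-bound depending only on the energy; once that is in place, the remainder is a routine perturbation bootstrap, made possible by the fact that no $L^\infty_t$-components appear in the $Y$-norm.
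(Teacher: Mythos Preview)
Your overall strategy coincides with the paper's: (i) upgrade the deterministic $L^3_t L^3_x$-bound for the unforced flow to a uniform $X(\R)$-bound $K(A)$; (ii) partition $[t_0,T_+)$ into finitely many pieces on which $\|F\|_Y$ is below the threshold $\eta(K(A))$; (iii) apply Lemma~\ref{lem:LongTimePerturbation} on each piece; (iv) conclude via Theorem~\ref{thm:LWPForcedEquation}. Steps (ii)--(iv) are carried out just as in the paper.

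The issue is in step (i), which you yourself flag as the main subtlety. You propose to partition $\R$ into sub-intervals $J$ on which the scattering norm is small and run a bootstrap using Corollary~\ref{cor:NonlinearEstimate}. But that corollary only yields $\||u|^2 u\|_{G(J)} \lesssim \|u\|_{X(J)}^3$, so on each $J$ one gets
\[
\|u\|_{X(J)} \lesssim \|u(t_\ast)\|_{\dot{H}^1} + \|u\|_{X(J)}^3,
\]
which does not close by continuity when $\|u(t_\ast)\|_{\dot{H}^1}\sim\sqrt{A}$ is large; the assumed $L^3_t L^3_x$-smallness never enters. The paper sidesteps Corollary~\ref{cor:NonlinearEstimate} here entirely: since the $L^1_t L^2_x$-component sits inside the $G$-norm, one estimates directly
\[
\||u|^2 u\|_{G(\R)} \lesssim \|\nabla(|u|^2 u)\|_{L^1_t L^2_x} \lesssim \|\nabla u\|_{L^3_t L^3_x}\,\|u\|_{L^3_t L^{12}_x}^2 \lesssim \|\nabla u\|_{L^3_t L^3_x}^3 \le L(A)^3,
\]
the last step using the Sobolev embedding $\dot{W}^{1,3}(\R^4)\hookrightarrow L^{12}(\R^4)$. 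Combined with~\eqref{eq:StrichartzinXLin}--\eqref{eq:StrichartzinXNonlin} and $\|u_0\|_{\dot{H}^1}\lesssim\sqrt{A}$, this gives $\|u\|_{X(\R)}\le K(A)$ in one stroke, with no bootstrap or sub-partition required. Once you replace your bootstrap by this direct estimate, the rest of your argument is correct and matches the paper.
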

\begin{proof}
	Recall that by the deterministic theory, see~\cite{RV2007, V2012}, for every $u_0 \in \dot{H}^1(\R^4)$ the solution $u$ of~\eqref{eq:CubicNLS} with initial data $u_0$ exists on $\R$ and scatters. We claim that there is a non-decreasing function $K \colon [0,\infty) \rightarrow [0,\infty)$ such that for every initial data $u_0 \in \dot{H}^1(\R^4)$ the global solution $u$ of~\eqref{eq:CubicNLS} satisfies
	\begin{equation}
	\label{eq:EstCubicNLSInX}
		\|u\|_{X(\R)} \leq K(E(u_0)).
	\end{equation}
	To prove this claim, we note that by Theorem~1.1 and Lemma~3.6 in~\cite{RV2007} there exists a non-decreasing function $L \colon [0,\infty) \rightarrow [0,\infty)$ such that 
	\begin{align*}
		 \| \nabla u\|_{L^3_t L^3_x} \leq L(E(u_0)).
	\end{align*}
	Applying~\eqref{eq:StrichartzinXLin}, \eqref{eq:StrichartzinXNonlin}, and Sobolev's inequality, we thus obtain
	\begin{align*}
		\|u\|_{X(\R)} &\lesssim \|u_0\|_{\dot{H}^1} + \||u|^2 u\|_{G(\R)} 
		\lesssim E(u_0)^{\frac{1}{2}} + \Big( \sum_{N \in 2^{\Z}} N^2 \|P_N(|u|^2 u)\|_{L^1_t L^2_x}^2\Big)^{\frac{1}{2}} \\
		&\lesssim  E(u_0)^{\frac{1}{2}} + \|\nabla (|u|^2 u)\|_{L^1_t L^2_x} \lesssim  E(u_0)^{\frac{1}{2}} + \|\nabla u\|_{L^3_t L^3_x} \|u\|_{L^3_t L^{12}_x}^2 \\
		&\lesssim E(u_0)^{\frac{1}{2}} + \|\nabla u\|_{L^3_t L^3_x}^3 \lesssim E(u_0)^{\frac{1}{2}} + L(E(u_0))^3,
	\end{align*}
	implying~\eqref{eq:EstCubicNLSInX}.
	
	Now take $\eta = \eta(K(A)) > 0$ from Lemma~\ref{lem:LongTimePerturbation}. Pick $t_0 < t_1 < \ldots < t_n = T_+$ such that
	\begin{align*}
		\|F\|_{Y(I_j)} \leq \eta
	\end{align*}
	for all $j \in \{1, \ldots, n\}$, where $I_j = (t_{j-1}, t_j)$. Let $u_j$ be the unique global solution of~\eqref{eq:CubicNLS} with initial data $u(t_{j-1}) = v(t_{j-1})$ for every $j \in \{1, \ldots, n\}$. In particular, $u$ exists on $I_j$ and satisfies
	\begin{align*}
		\|u_j\|_{X(I_j)} \leq \|u_j\|_{X(\R)} \leq K(E(v(t_{j-1}))) \leq K(A)
	\end{align*}
	for every $j$, where we used~\eqref{eq:EstCubicNLSInX} and the assumption. Since $\|F\|_{Y(I_j)} \leq \eta(K(A))$, Lemma~\ref{lem:LongTimePerturbation} yields 
	\begin{align*}
		\|v\|_{X(I_j)} \leq C(K(A)).
	\end{align*}		
	Hence,
	\begin{align*}
		\|v\|_{X([t_0,T_+))} \leq \sum_{j = 1}^n \|v\|_{X(I_j)} \leq n C(K(A)) < \infty.
	\end{align*}
	The other assertions of the proposition now follow from Theorem~\ref{thm:LWPForcedEquation}.
\end{proof}

%%%%%%%%%%%%%%%%%%%%%%%%%%%%%%%%%%%%%%%%%%%%%%%%%%%%%%%%%%%%%%%%%%%%%%%%%%%%%%%%%%%%%%%%%%%%%%%%%%%%%%%%%%
%%%%%%%%%%%%%%%%%%%%%%%%%%%%%%%%%%%%%%%%%%%%%%%%%%%%%%%%%%%%%%%%%%%%%%%%%%%%%%%%%%%%%%%%%%%%%%%%%%%%%%%%%%
%%%%%%%%%%%%%%%%%%%%%%%%%%%%%%%%%%%%%%%%%%%%%%%%%%%%%%%%%%%%%%%%%%%%%%%%%%%%%%%%%%%%%%%%%%%%%%%%%%%%%%%%%%
\section{Almost sure scattering}
\label{sec:AlmostSureScattering}

We finally prove the almost sure scattering result for data randomized in frequency space, the angular variable, and physical space. We first formulate conditions on the forcing $F$ which imply scattering for the forced equation
\begin{prop}
	\label{prop:UniformEnergyEstimate}
	Let $0 < \delta \ll 1$, $t_0 \in \R$, $v_0 \in \dot{H}^1(\R^4)$, and let $F \in Y(\R)$ be a solution of the linear Schr{\"o}dinger equation with
	\begin{align*}
		\|\nabla F\|_{L^2_t L^{\frac{14}{5-\delta}}_x} + \|\nabla F\|_{L^2_t L^{\frac{14}{2+\delta}}_x} + \|\nabla F\|_{L^{\frac{2}{1-\delta}}_t L^{\frac{14}{5-\delta}}_x} + \|\nabla F\|_{L^1_t L^\infty_x} + \|F\|_{L^\infty_t L^4_x} < \infty,
	\end{align*}
	where all the space-time norms are taken over $\R \times \R^4$. Let $v$ denote the unique solution of~\eqref{eq:ForcedCubicNLS} with initial data $v_0$ on its maximal interval of existence $(T_{-}, T_+)$. Then
	\begin{align*}
		\sup_{t \in (T_{-}, T_+)} E(v(t)) < \infty.	
	\end{align*}	 
	In particular, $(T_{-}, T_+) = \R$ and the solution $v$ scatters both forward and backward in time, i.e. there exist $v_{\pm} \in \dot{H}^1(\R^4)$ such that
	\begin{align*}
		\lim_{t \rightarrow \pm \infty} \| v(t) - e^{\imu t \Delta} v_{\pm} \|_{\dot{H}^1(\R^4)} = 0.
	\end{align*}
\end{prop}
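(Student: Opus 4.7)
By the conditional scattering statement of Proposition~\ref{prop:ConditionalScattering}, applied in forward time on $[t_0,T_+)$ and in backward time on $(T_-,t_0]$, it suffices to show that the energy $E(v(t))$ remains bounded on the maximal interval of existence $(T_-,T_+)$; the scattering conclusion and the fact $T_\pm=\pm\infty$ then follow automatically. The whole proof is therefore an a-priori energy estimate for the forced equation~\eqref{eq:ForcedCubicNLS}.

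\textbf{Step 1 (energy identity).} Differentiate $E(v(t))=\tfrac12\|\nabla v(t)\|_{L^2}^2+\tfrac14\|v(t)\|_{L^4}^4$ along~\eqref{eq:ForcedCubicNLS}. Using $i\partial_t v = -\Delta v + |F+v|^2(F+v)$, integration by parts, and the cancellations which would make $E$ conserved if $v$ itself solved the cubic NLS, one obtains
\begin{equation*}
\partial_t E(v(t)) \;=\; -\Im\!\int_{\R^4}\bigl(-\Delta v + |v|^2 v\bigr)\,\overline{H}\,dx,\qquad H := |F+v|^2(F+v)-|v|^2v.
\end{equation*}
Expanding yields $H = |F|^2F + F^2\bar v + 2|F|^2 v + 2F|v|^2 + \bar F v^2$, i.e.\ every term contains at least one factor of $F$. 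After integrating by parts in the $-\Delta v$ piece, $\partial_t E$ decomposes into a finite sum of integrals of the schematic form
\begin{equation*}
\int \nabla v \cdot (\nabla^{a_0} F)\,F^{a_1} \bar F^{a_2}\, v^{b_1}\bar v^{b_2}\,dx \quad\text{and}\quad \int |v|^2 v\,F^{a_1}\bar F^{a_2}\,v^{b_1}\bar v^{b_2}\,dx,
\end{equation*}
with $a_0\in\{0,1\}$, $a_1+a_2\geq 1$, and total degree $3$ in $(F,v)$ in the first family and $5$ in the second.

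\textbf{Step 2 (termwise bounds).} Estimate each resulting integral by H\"older in space, exploiting the Sobolev embedding $\dot H^1(\R^4)\hookrightarrow L^4(\R^4)$ to convert any factor of $v$ into $\|v\|_{L^4}\lesssim E(v(t))^{1/2}$ and any factor of $\nabla v$ into $\|\nabla v\|_{L^2}\lesssim E(v(t))^{1/2}$. The factors involving $F$ are placed into the Lebesgue spaces provided by the hypotheses: $\|F\|_{L^\infty_t L^4_x}$ for pure-$F$ factors and $\|\nabla F\|_{L^2_t L^{14/(5-\delta)}_x}$, $\|\nabla F\|_{L^2_t L^{14/(2+\delta)}_x}$, $\|\nabla F\|_{L^{2/(1-\delta)}_t L^{14/(5-\delta)}_x}$ for the terms carrying exactly one $\nabla F$, interpolating with $\|F\|_{L^\infty_t L^4_x}$ when needed to dispose of the remaining copies of $F$. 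The single exception is the borderline term
\begin{equation*}
\Bigl|\int \nabla v\cdot(\nabla F)\,v^2\,dx\Bigr| \;\le\; \|\nabla F\|_{L^\infty_x}\|\nabla v\|_{L^2_x}\|v\|_{L^4_x}^{2}\;\lesssim\;\|\nabla F\|_{L^\infty_x}\,E(v(t))^{3/2},
\end{equation*}
which is the only place where the $L^1_t L^\infty_x$-hypothesis on $\nabla F$, supplied by Proposition~\ref{prop:L1LinftyEstimate}, is used; this is precisely why the randomization~\eqref{eq:RandomizationFreqAngleSpace} with a physical-space component is needed and is the source of the restriction $s>\tfrac57$.

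\textbf{Step 3 (Gronwall/bootstrap and conclusion).} Combining the estimates one arrives at a differential inequality of the form
\begin{equation*}
\partial_t E(v(t))\;\le\; g_1(t)\bigl(1+E(v(t))\bigr)\;+\; g_{3/2}(t)\bigl(1+E(v(t))\bigr)^{3/2},
\end{equation*}
where $g_1\in L^1_t(\R)$ is a fixed polynomial in the hypothesized norms of $F$, and $g_{3/2}(t)\sim\|\nabla F(t)\|_{L^\infty_x}$ lies in $L^1_t(\R)$. Linear-in-$E$ terms can be absorbed by standard Gronwall; for the $E^{3/2}$ term one writes $\tfrac{d}{dt}(1+E)^{-1/2}\ge-\tfrac12 g_{3/2}(t)$ minus a linear remainder, which closes on any subinterval $I$ with $\|g_{3/2}\|_{L^1(I)}$ small compared with $(1+\sup_{I}E)^{-1/2}$. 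Partitioning $(T_-,T_+)$ into finitely many such subintervals — the total $L^1$-mass of $g_{3/2}$ is finite — and iterating produces a uniform bound $\sup_{t\in(T_-,T_+)}E(v(t))\le C(\|F\|_{Y(\R)},\|\nabla F\|_{L^1_tL^\infty_x},\|F\|_{L^\infty_tL^4_x},E(v_0))$. Proposition~\ref{prop:ConditionalScattering} then gives $T_\pm=\pm\infty$, a global solution $v\in C(\R,\dot H^1)\cap X(\R)$, and scattering to some $v_\pm\in\dot H^1(\R^4)$.

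\textbf{Main obstacle.} The delicate point is Step~3: because the worst term is quadratically supercritical ($E^{3/2}$) with a merely $L^1_t$ coefficient whose norm is \emph{not} assumed small, a single-shot nonlinear Gronwall does not close. The bootstrap must be organized as a finite chain of subintervals chosen adaptively so that the energy at most doubles (say) on each piece. Verifying that every remaining mixed term in $\partial_t E$ is genuinely linear in $E$ up to an $L^1_t$ coefficient — in particular that terms such as $\int F v|\nabla v|^2$ or $\int|v|^3|F|^3$ can be placed into the Strichartz-type norms of the hypothesis by interpolation with $\|F\|_{L^\infty_tL^4_x}$ — is the technical heart of the argument.
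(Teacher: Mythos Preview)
Your direct approach to the energy estimate has a genuine gap. When you integrate by parts in the $-\Delta v$ piece and expand $\nabla\overline{H}$, the derivative can land on a $v$-factor of $H$, producing terms with \emph{two} copies of $\nabla v$. Concretely, from $\bar F^2 v$ and $2\bar F|v|^2$ in $\overline{H}$ you get
\[
-\Im\!\int_{\R^4}(\nabla v)^2\,\bar F^2\,dx
\qquad\text{and}\qquad
-\Im\!\int_{\R^4}2(\nabla v)^2\,\bar F\,\bar v\,dx,
\]
and these do \emph{not} vanish under the imaginary part (unlike the $|\nabla v|^2|F|^2$ and $|\nabla v|^2\Re(F\bar v)$ contributions, which are real). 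With only the energy quantities $\|\nabla v\|_{L^2_x}$ and $\|v\|_{L^4_x}$ at your disposal for $v$, there is no way to bound $\int|\nabla v|^2|F||v|$ or $\int|\nabla v|^2|F|^2$: any H\"older splitting forces $\nabla v$ into some $L^p_x$ with $p>2$, which is not controlled by $E(v)$. This is exactly the obstruction you flag as something to ``verify'' in your last paragraph, but it cannot be verified within your scheme.

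The paper circumvents this by a key reformulation you are missing. One writes
\[
\partial_t E(v)=-\tfrac14\,\partial_t\!\int(|F+v|^4-|F|^4-|v|^4)\,dx
+\Re\!\int\partial_t\overline{F}\,\bigl(|F+v|^2(F+v)-|F|^2F\bigr)\,dx,
\]
and then uses that $F$ solves the \emph{linear} Schr\"odinger equation, so $\partial_t\overline{F}=-\imu\Delta\overline{F}$; after an integration by parts every non-boundary term carries a factor $\nabla\overline{F}$ and at most \emph{one} derivative on $v$. The total-derivative piece is absorbed as a boundary term controlled by $C_\kappa\|F\|_{L^\infty_tL^4_x}^4+\kappa\sup E(v)$ via Young's inequality. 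With this structure all estimates are \emph{linear} in $1+\sup E(v)$ (your claim that $\|\nabla v\|_{L^2}\|v\|_{L^4}^2\lesssim E^{3/2}$ is incorrect; it is $\lesssim E$), so the closing argument is simple absorption on finitely many subintervals rather than a nonlinear Gronwall.
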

\begin{proof}
	We only show $T_+ = \infty$ and the scattering forward in time as the proof in the negative time direction works analogously.
	
	A computation as in~\cite{KMV2019} and~\cite{DLM2019} yields
	\begin{align*}
		\partial_t E(v(t)) &= \Re\int_{\R^4}  \partial_t \overline{v}(-\Delta v + |v|^2 v) \dd x \\
		&= - \Re \int_{\R^4}  \partial_t \overline{v}(|F+v|^2(F+v) - |v|^2 v) \dd x, 
	\end{align*}
	where we integrated by parts and used~\eqref{eq:ForcedCubicNLS} after taking the time derivative. We can reformulate this identity as
	\begin{align*}
		\partial_t E(v(t)) &= -\frac{1}{4} \partial_t \int_{\R^4} (|F+v|^4 - |F|^4 - |v|^4) \dd x \\
			&\hspace{6em} + \Re \int_{\R^4} \partial_t \overline{F}(|F+v|^2(F+v) - |F|^2 F) \dd x \\
			&= -\frac{1}{4} \partial_t \int_{\R^4} (|F+v|^4 - |F|^4 - |v|^4) \dd x \\
				&\hspace{6em} -\Im \int_{\R^4} \nabla \overline{F} \cdot \nabla [|F+v|^2 (F+v) - |F|^2 F] \dd x,
	\end{align*}
	where we used that $F$ satisfies $\partial_t F =  \imu \Delta F$ and integrated by parts again.
	Setting
	\begin{align*}
		A_{T_0}(T) = 1+ \sup_{t \in [T_0,T]} E(v(t)),
	\end{align*}
	we thus obtain
	\begin{align}
	\label{eq:EstimateEnergy}
		A_{T_0}(T) &\leq 1 + E(v(T_0)) + \| |F+v|^4 - |F|^4 - |v|^4 \|_{L^\infty_{(T_0,T)}L^1_x} \nonumber\\
		&\hspace{7em} + \| \nabla \overline{F}\cdot \nabla [|F+v|^2 (F+v) - |F|^2 F] \|_{L^1_{(T_0,T)}L^1_x}
	\end{align}
	for all $T_0,T \in (T_{-}, T_+)$ with $T > T_0$. We fix such $T_0$ and $T$ for a moment.
	
	Estimating the boundary terms first, we derive via H{\"o}lder's and Young's inequalities
	\begin{align}
	\label{eq:EstimateBoundaryTerms}
		\| |F+v|^4 - |F|^4 - |v|^4 \|_{L^\infty_tL^1_x} &\leq C_{\kappa} \|F\|_{L^\infty_t L^4_x}^4 + \kappa \|v\|_{L^\infty_t L^4_x}^4 \nonumber\\
		&\leq C_{\kappa} \|F\|_{L^\infty_t L^4_x}^4 + \kappa A_{T_0}(T) 
	\end{align}
	for every $\kappa > 0$, where all the space-time norms are taken over $(T_0,T)\times \R^4$.
	
	It remains to estimate the last summand on the right-hand side of~\eqref{eq:EstimateEnergy}. In the terms appearing there, we can estimate each factor in the same way as its complex conjugate so that we drop the complex conjugation for convenience. Consequently, the last summand on the right-hand side of~\eqref{cor:NonlinearEstimate} is controlled by
	\begin{align}
	\label{eq:EstimatesL1L1}
		\|(\nabla F)^2 F v \|_{L^1_t L^1_x} &\leq \|\nabla F\|_{L^{\frac{2}{1-\delta}}_t L^{\frac{14}{5-\delta}}_x}^2 \|F\|_{L^{\frac{1}{\delta}}_t L^{\frac{28}{1+4\delta}}_x} \|v\|_{L^\infty_t L^4_x} \nonumber\\
		&\lesssim  \|\nabla F\|_{L^{\frac{2}{1-\delta}}_t L^{\frac{14}{5-\delta}}_x}^2 \|F\|_{Y(T_0,T)} A_{T_0}(T), \nonumber\\
		\|(\nabla F)^2 v^2 \|_{L^1_t L^1_x} &\leq \|\nabla F\|_{L^2_t L^{\frac{14}{5-\delta}}_x} \|\nabla F\|_{L^2_t L^{\frac{14}{2+\delta}}_x} \|v\|_{L^\infty_t L^4_x}^2 \nonumber\\
		&\leq \|\nabla F\|_{L^2_t L^{\frac{14}{5-\delta}}_x} \|\nabla F\|_{L^2_t L^{\frac{14}{2+\delta}}_x} A_{T_0}(T),\nonumber\\
		\|(\nabla F) (\nabla v) F^2 \|_{L^1_t L^1_x} &\leq \|\nabla F\|_{L^2_t L^{\frac{14}{5-\delta}}_x} \|\nabla v\|_{L^\infty_t L^2_x} \|F\|_{L^{\frac{2}{1-2\delta}}_t L^{\frac{14}{1-8\delta}}_x} \|F\|_{L^{\frac{1}{\delta}}_t L^{\frac{14}{1+9\delta}}_x}\nonumber\\
		&\lesssim  \|\nabla F\|_{L^2_t L^{\frac{14}{5-\delta}}_x} \|F\|_{L^{\frac{2}{1-2\delta}}_t \dot{B}^{\frac{4}{7}+\nu}_{\frac{14}{3},2}}  \|F\|_{Y(T_0,T)} A_{T_0}(T) \nonumber\\
		&\lesssim  \|\nabla F\|_{L^2_t L^{\frac{14}{5-\delta}}_x} \|F\|_{Y(T_0,T)}^2 A_{T_0}(T),\nonumber\\
		\|(\nabla F) (\nabla v) F v \|_{L^1_t L^1_x} &\leq \|\nabla F\|_{L^2_t L^{\frac{28}{5+16 \delta}}_x} \|\nabla v\|_{L^\infty_t L^2_x} \|F\|_{L^2_t L^{\frac{14}{1-8\delta}}_x} \|v\|_{L^\infty_t L^4_x} \nonumber\\
		&\lesssim  (\|\nabla F\|_{L^2_t L^{\frac{14}{5- \delta}}_x} + \|\nabla F\|_{L^2_t L^{\frac{14}{2+ \delta}}_x} ) \|F\|_{Y(T_0,T)} A_{T_0}(T),\nonumber\\
		\|(\nabla F) (\nabla v) v^2 \|_{L^1_t L^1_x} &\leq \|\nabla F\|_{L^1_t L^\infty_x} \|\nabla v\|_{L^\infty_t L^2_x} \|v\|_{L^\infty_t L^4_x}^2 \leq  \|\nabla F\|_{L^1_t L^\infty_x} A_{T_0}(T),
	\end{align}
	where all space-time norms are taken over $(T_0,T) \times \R^4$. In the derivation of the above estimates, we only used H{\"o}lder's inequality, the embedding $\dot{B}^0_{q,2}(\R^4) \hookrightarrow L^q(\R^4)$ $(q \geq 2)$ in combination with Minkowski's inequality, and~\eqref{eq:InterploationYNj} to estimate
	\begin{align*}
		\|F\|_{L^{\frac{1}{\delta}}_t L^{\frac{28}{1+4\delta}}_x} + \|F\|_{L^{\frac{1}{\delta}}_t L^{\frac{14}{1+9\delta}}_x} \lesssim \|F\|_{Y(T_0,T)},
	\end{align*}		
	as well as $\|P_N F\|_{L^{\frac{14}{1-8\delta}}_x} \lesssim N^{\frac{4}{7} + \nu}\|P_N F\|_{L^{\frac{14}{3}}_x}$ for all $N \in 2^{\Z}$ by Bernstein's inequality combined with~\eqref{eq:InterploationYNj} and~\eqref{eq:CollectionEstimatesYNj} to infer
	\begin{align*}
		\|F\|_{L^{2}_t L^{\frac{14}{1-8\delta}}_x} + \|F\|_{L^{\frac{2}{1-2\delta}}_t L^{\frac{14}{1-8\delta}}_x} \lesssim \|F\|_{Y(T_0,T)},
	\end{align*}
	and interpolation and Young's inequality to get
	\begin{align*}
		\|\nabla F\|_{L^2_t L^{\frac{28}{5+16 \delta}}_x} \lesssim \|\nabla F\|_{L^2_t L^{\frac{14}{5- \delta}}_x} + \|\nabla F\|_{L^2_t L^{\frac{14}{2+ \delta}}_x}.
	\end{align*}
	
	We now fix the maximum $C$ of the implicit constants appearing on the right-hand sides of~\eqref{eq:EstimatesL1L1} and pick $\kappa \in (0,\frac{1}{4})$ such that
	\begin{align*}
		8 C \cdot \kappa < \frac{1}{2}.
	\end{align*}
	The assumptions and the dominated convergence theorem imply that there exist times $t_0 < t_1 < \ldots < t_n < t_{n+1} = T_+$ such that
	\begin{align*}
		\|F\|_{Y(I_j)} + \|\nabla F\|_{L^2_{I_j} L^{\frac{14}{5-\delta}}_x} + \|\nabla F\|_{L^2_{I_j} L^{\frac{14}{2+\delta}}_x} + \|\nabla F\|_{L^{\frac{2}{1-\delta}}_{I_j} L^{\frac{14}{5-\delta}}_x} + \|\nabla F\|_{L^1_{I_j} L^\infty_x} \leq \kappa
	\end{align*}
	for all $j \in \{0, 1, \ldots, n\}$, where $I_j = (t_j, t_{j+1})$ for $j \in \{0, \ldots, n\}$. Combining~\eqref{eq:EstimateEnergy} with~\eqref{eq:EstimateBoundaryTerms} and~\eqref{eq:EstimatesL1L1}, we thus arrive at
	\begin{align*}
		A_{t_j}(t_{j+1}) &\leq 1 + E(v(t_j)) + C_\kappa \|F\|_{L^\infty_{I_j} L^4_x} + 8 C \cdot \kappa A_{t_j}(t_{j+1}), \\
		A_{t_n}(T) &\leq 1 + E(v(t_n)) + C_\kappa \|F\|_{L^\infty_{I_n} L^4_x} + 8 C \cdot \kappa A_{t_n}(T),
	\end{align*}
	implying
	\begin{align*}
		A_{t_j}(t_{j+1}) &\leq 2 + 2 E(v(t_j)) + 2 C_\kappa \|F\|_{L^\infty_{I_j} L^4_x}, \\
		 A_{t_n}(T) &\leq 2 + 2 E(v(t_n)) + 2 C_\kappa \|F\|_{L^\infty_{I_n} L^4_x}
	\end{align*}
	for all $j \in \{0, \ldots, n-1\}$ and $T \in (T_0,T_+)$ by our choice of $\kappa$. Letting $T \rightarrow T_+$, we conclude
	\begin{align*}
		A_{t_n}(t_{n+1}) = A_{t_n}(T_+) \leq  2 + 2 E(v(t_n)) +  2 C_\kappa \|F\|_{L^\infty_{I_n} L^4_x}.
	\end{align*}
	By induction, we get
	\begin{align*}
		A_{t_j}(t_{j+1}) \leq (2^{j+2} - 2)(1 + C_\kappa \|F\|_{L^\infty_t L^4_x}) + 2^{j+1} E(v(t_0)) 
	\end{align*}
	for all $j \in \{0, \ldots, n\}$. We thus arrive at
	\begin{align*}
		\sup_{t \in (t_0, T_+)} E(v(t)) &= \max\{A_{t_j}(t_{j+1}) \colon j \in \{0, \ldots, n\} \} \\
		&= (2^{n+2} - 2)(1 + C_\kappa \|F\|_{L^\infty_t L^4_x}) + 2^{n+1} E(v(t_0))  < \infty.
	\end{align*}
	Arguing analogously on $(T_{-},t_0)$, we obtain
	\begin{align*}
		\sup_{t \in (T_{-}, T_+)} E(v(t)) < \infty.
	\end{align*}
	The remaining assertions of the proposition now follow from Proposition~\ref{prop:ConditionalScattering}.
\end{proof}

Similar to the proof of Theorem~\ref{thm:locwp}, Theorem~\ref{thm:AlmostSureScattering} now immediately follows once we check that $e^{\imu t \Delta} f^{\tilde{\om}}$ satisfies the assumptions on the forcing from Proposition~\ref{prop:UniformEnergyEstimate} almost surely.

\noindent
\emph{Proof of Theorem~\ref{thm:AlmostSureScattering}:} Fix $0 < \delta \ll 1$ such that $\frac{5}{7} + 2\delta \leq s$. Recall that $u$ is a solution of~\eqref{eq:CubicNLS} if and only if $v(t) := u(t) - e^{\imu t \Delta} f^{\tilde{\om}}$ solves~\eqref{eq:ForcedCubicNLS} with forcing $F(t) = e^{\imu t \Delta} f^{\tilde{\om}}$ and initial data $v_0 = 0$.

To check the assumptions of Proposition~\ref{prop:UniformEnergyEstimate}, we first note that analogous to  the proof of Theorem~\ref{thm:locwp}, Corollary~\ref{cor:RandomizationInY}~\ref{it:CorProbabilisticEstimatetildeom} implies $e^{\imu t \Delta} f^{\tilde{\om}} \in Y(\R)$ for almost all $\tilde{\om} \in \Omega$.

Since $(2, \frac{14}{5-\delta})$ and $(\frac{2}{1-\delta}, \frac{14}{5-\delta})$ satisfy~\eqref{eq:ExtendedRange} and $\frac{14}{2+\delta} \geq \frac{14}{5-\delta}$, we can apply Proposition~\ref{prop:ProbabilisticEstimate}~\ref{it:ProbabilisticEstimate1tildeom} with $(q,p_0) = (2, \frac{14}{5-\delta})$ and $(q,p_0) = (\frac{2}{1-\delta}, \frac{14}{5-\delta})$, respectively, to infer
\begin{align}
	&\|\nabla  e^{\imu t \Delta} f^{\tilde{\om}} \|_{L^\beta_{\tilde{\om}}L^2_t L^{\frac{14}{5-\delta}}_x} 
		\lesssim \|e^{\imu t \Delta} f^{\tilde{\om}}\|_{L^\beta_{\tilde{\om}}\dot{B}^1_{2,\frac{14}{5-\delta},2}}  \lesssim \sqrt{\beta} \|f\|_{H^{\frac{4}{7}+\frac{2}{7}\delta}} \lesssim \sqrt{\beta} \|f\|_{H^s}, \nonumber\\
		 &\|\nabla  e^{\imu t \Delta} f^{\tilde{\om}} \|_{L^\beta_{\tilde{\om}}L^2_t L^{\frac{14}{2+\delta}}_x} \lesssim \|e^{\imu t \Delta} f^{\tilde{\om}}\|_{L^\beta_{\tilde{\om}}\dot{B}^1_{2,\frac{14}{2+\delta},2}} \lesssim \sqrt{\beta} \|f\|_{H^{\frac{4}{7}+\frac{2}{7}\delta}}\lesssim \sqrt{\beta} \|f\|_{H^s}, \nonumber\\
		 &\|\nabla  e^{\imu t \Delta} f^{\tilde{\om}} \|_{L^\beta_{\tilde{\om}}L^{\frac{2}{1-\delta}}_t L^{\frac{14}{5-\delta}}_x} 
		\lesssim \|e^{\imu t \Delta} f^{\tilde{\om}}\|_{L^\beta_{\tilde{\om}}\dot{B}^1_{\frac{2}{1-\delta},\frac{14}{5-\delta},2}}  \lesssim \sqrt{\beta} \|f\|_{H^{\frac{4}{7}+\frac{9}{7}\delta}}\lesssim \sqrt{\beta} \|f\|_{H^s} \label{eq:CollectionEstimatesNablaFlow}
\end{align}
for all $\beta \in [1,\infty)$. Proposition~\ref{prop:L1LinftyEstimate} yields
\begin{equation}
\label{eq:EstimateNablaFlowL1Linfty}
	\|\nabla  e^{\imu t \Delta} f^{\tilde{\om}} \|_{L^\beta_{\tilde{\om}}L^1_t L^{\infty}_x} \lesssim \sqrt{\beta} \| f \|_{H^s}
\end{equation}
for all $\beta \in [1,\infty)$. For the remaining $L^\infty_t L^4_x$-estimate, we exploit that $e^{\imu t \Delta} f^{\tilde{\om}}$ is a solution of the linear Schr{\"o}dinger equation so that we obtain after applying Sobolev's embedding
\begin{align*}
	\|e^{\imu t \Delta} f^{\tilde{\om}}\|_{L^\infty_t L^4_x} \lesssim \|\langle \partial_t \rangle ^{2\delta} e^{\imu t \Delta} f^{\tilde{\om}}\|_{L^{\frac{1}{\delta}} L^4_x} \lesssim \|\langle \nabla \rangle ^{4\delta} e^{\imu t \Delta} f^{\tilde{\om}}\|_{L^{\frac{1}{\delta}} L^4_x} \lesssim \|e^{\imu t \Delta} f^{\tilde{\om}}\|_{Y(\R)}.
\end{align*}
Combined with~\eqref{eq:CollectionEstimatesNablaFlow} and~\eqref{eq:EstimateNablaFlowL1Linfty}, we conclude that for almost every $\tilde{\omega} \in \Omega$, $e^{\imu t \Delta} f^{\tilde{\om}}$ belongs to $Y(\R)$ and satisfies the assumptions on the forcing $F$ from Proposition~\ref{prop:UniformEnergyEstimate}. The assertion of Theorem~\ref{thm:AlmostSureScattering} thus follows from Proposition~\ref{prop:UniformEnergyEstimate}. \hfill $\qed$

\vspace{1em}

\textbf{Acknowledgment.} I want to thank Sebastian Herr for helpful discussions.

Funded by the Deutsche Forschungsgemeinschaft (DFG, German Research Foundation) – SFB 1283/2 2021 – 317210226.

\bibliographystyle{abbrv}
\bibliography{SupercriticalCubicNLS} 
 
\end{document}